\newtheorem{theorem}{Theorem}[section]
\newtheorem{proposition}[theorem]{Proposition}
\theoremstyle{definition}
\newtheorem{lemma}[theorem]{Lemma}
\newtheorem{corollary}[theorem]{Corollary}
\newtheorem{problem}[theorem]{Problem}
\theoremstyle{definition}
\newtheorem{conjecture}[theorem]{Conjecture}
\newtheorem{example}[theorem]{Example}
\newtheorem{definition}[theorem]{Definition}
\theoremstyle{remark}
\newtheorem{remark}[theorem]{Remark}
\newtheorem{notation}[theorem]{Notation}
\begin{document}
\title[On nonlinear Rado conditions]{On Rado conditions for nonlinear
Diophantine equations}
\author[Jordan Mitchell Barrett]{Jordan Mitchell Barrett}
\address{Jordan Mitchell Barrett\\
School of Mathematics and Statistics\\
Victoria University of Wellington\\
PO Box 600\\
Wellington 6140, New Zealand.}
\email{math@jmbarrett.nz}
\urladdr{http://jmbarrett.nz/}
\author{Martino Lupini}
\address{Martino Lupini\\
School of Mathematics and Statistics\\
Victoria University of Wellington\\
PO Box 600\\
Wellington 6140, New Zealand.}
\email{martino.lupini@vuw.ac.nz}
\urladdr{http://www.lupini.org/}
\author{Joel Moreira}
\address{Joel Moreira\\
Mathematics Institute\\
University of Warwick\\
Coventry, UK.}
\email{joel.moreira@warwick.ac.uk}
\urladdr{https://warwick.ac.uk/fac/sci/maths/people/staff/moreira/}
\thanks{This work was initiated during a visit of J.M. to M.L. at the
California Institute of Technology. The authors gratefully acknowledge the
hospitality and the financial support of the Institute. J.M.B. was supported
by a Summer Research Scholarship from Victoria University of Wellington.
M.L. was partially supported by the NSF Grant DMS-1600186, by a Research
Establishment Grant from Victoria University of Wellington, and by a Marsden
Fund Fast-Start Grant from the Royal Society of New Zealand. J.M. was
partially supported by the NSF Grant DMS-1700147.}
\dedicatory{}
\subjclass[2000]{Primary 05D10, 11D99; Secondary 11U10}
\keywords{Diophantine equation, partition regular, ultrafilter, nonstandard
analysis}

\begin{abstract}
Building on previous work of Di Nasso and Luperi Baglini, we provide general
necessary conditions for a Diophantine equation to be partition regular.
These conditions are inspired by Rado's characterization of partition
regular linear homogeneous equations. We conjecture that these conditions
are also sufficient for partition regularity, at least for equations whose
corresponding monovariate polynomial is linear. This would provide a natural
generalization of Rado's theorem.

We verify that such a conjecture holds for the equations $%
x^{2}-xy+ax+by+cz=0 $ and $x^{2}-y^{2}+ax+by+cz=0$ for $a,b,c\in \mathbb{Z}$
such that $abc=0$ or $a+b+c=0$. To deal with these equations, we establish
new results concerning the partition regularity of polynomial configurations
in $\mathbb{Z}$ such as $\left\{ x,x+y,xy+x+y\right\} $, building on the
recent result on the partition regularity of $\left\{ x,x+y,xy\right\} $.
\end{abstract}

\maketitle


\section{Introduction}

Partition regularity is a central notion in Ramsey theory. Briefly, a family
$\mathcal{A}$ of subsets of $\mathbb{N}$ is \emph{partition regular }if
every finite coloring of $\mathbb{N}$ admits infinitely many monochromatic
sets from $\mathcal{A}$ (see Section \ref{Section:main} for precise
definitions). A \emph{configuration }is partition regular if the family of
sets that realize that configuration is partition regular. For instance, the
seminal Schur Lemma from 1916 \cite{schur_kongruenz_1916} asserts that \emph{%
Schur's triples}, i.e.\ configurations $\left\{ a,b,a+b\right\} $ for $%
a,b\in \mathbb{N}$, are partition regular. Such a result inspired and
motivated one of the early gems of Ramsey theory: the celebrated van der
Waerden theorem on arithmetic progressions \cite{van_der_waerden_beweis_1927}%
, asserting that arithmetic progressions of arbitrary finite length are
partition regular.

Schur's triples can be seen as solutions to the Diophantine equation $z=x+y$%
. Similarly, arithmetic progressions can be seen as solutions to a suitable
system of linear Diophantine equations. This perspective led Rado in 1933
\cite{rado_studien_1933} to provide a complete characterization of which
systems of linear Diophantine equations are partition regular. An equation
(or system of equations) $P\left( x_{1},\ldots ,x_{n}\right) =0$ is called
partition regular if the family
\begin{equation*}
\left\{ \left\{ a_{1},\ldots ,a_{n}\right\} \subseteq \mathbb{N}:P\left(
a_{1},\ldots ,a_{n}\right) =0\right\}
\end{equation*}
of its solutions is partition regular. Rado's theorem provides a
characterization of which linear Diophantine equations are partition regular
in terms of a simple condition on their coefficients (Rado's condition).

Since Rado's theorem, progress in the study of partition regularity of
Diophantine equations has been slower, as the nonlinear case has proved to
be substantially more difficult. Important results were obtained by
Bergelson \cite{bergelson_ergodic_1996,bergelson_ultrafilters_2010},
Khalfalah and Szem\'{e}redi \cite{khalfalah_number_2006}, Csikv\'{a}ri,
Gyarmati, and S\'{a}rk\H{o}zy \cite{csikvari_density_2012}, Hindman \cite%
{hindman_monochromatic_2011}, Frantzikinakis and Host \cite%
{frantzikinakis_higher_2017}, Green and Sanders \cite%
{green_monochromatic_2016}, Bergelson and Moreira \cite%
{bergelson_measure_2018,bergelson_ergodic_2017}, Moreira \cite%
{moreira_monochromatic_2017}, and Bergelson, Moreira, and Johnson \cite%
{bergelson_new_2017}. Recently, some general necessary conditions for
partition regularity were obtained using nonstandard methods by Di Nasso and
Luperi Baglini \cite{di_nasso_ramsey_2018}, building on previous work of Di
Nasso and Riggio \cite{di_nasso_fermat-like_2018}, and Luperi Baglini \cite%
{luperi_baglini_partition_2014,luperi_baglini_hyperintegers_2012}.

Despite these efforts, the problem of determining which Diophantine
equations are partition regular has been solved only in very special cases.
A major problem in this area is to find explicit necessary and sufficient
conditions for a Diophantine equation to be partition regular, generalizing
Rado's theorem in the linear case. Incidentally, it should be noted that the
problem of determining whether a Diophantine equation has \emph{some}
integer solution is undecidable (in the sense of computability theory), as
shown by Yuri Matiyasevich---building on previous work of Martin Davis,
Hilary Putnam, and Julia Robinson---in his solution of Hilbert's 10th
problem \cite{matiyasevich_enumerable_1970}.

In this work, we present general necessary conditions for partition
regularity of Diophantine equations. We name them \emph{Rado conditions}, as
they are inspired by Rado's characterization of partition regular linear
Diophantine equations, and indeed in the particular case of linear
equations, they recover the conditions from Rado's theorem. They can also be
seen as a strengthening of some conditions recently proved to be necessary
by Di Nasso and Luperi Baglini \cite{di_nasso_ramsey_2018}; see also \cite%
{di_nasso_nonstandard_2019}. We show that the Rado conditions are indeed
\emph{sufficient }for partition regularity for the equations $%
x^{2}-xy+ax+by+cz=0$ and $x^{2}-y^{2}+ax+by+cz=0$ for $a,b,c\in \mathbb{Z}$
such that $abc=0$ or $a+b+c=0$.

In order to verify that the Rado conditions are sufficient for the latter
equation, we parametrize the solutions with polynomial configurations in $%
\mathbb{Z}$ of the form
\begin{equation*}
\left\{ r+p\left( s\right) ,r+q\left( s\right) ,rs+r+ds\right\} \text{,}
\end{equation*}%
where $p\left( s\right) ,q\left( s\right) $ are polynomials with integer
coefficients vanishing at $0$, and $d\in \mathbb{Q}$. We show that such
configurations are partition regular, building on the analogue result from
\cite{moreira_monochromatic_2017} for the configurations $\left\{ r+p\left(
s\right) ,r+q\left( s\right) ,rs\right\} $.

We believe that the results of this paper are an important contributions to
the study of partition regularity of Diophantine equations. Indeed, they
provide general necessary conditions that can be used to rule out partition
regularity in many cases, and can be seen as a natural generalization to the
nonlinear case of the condition in Rado's theorem. This paper is divided in
five sections, besides this introduction. Section \ref{Section:main}
introduces the terminology to be used in the rest of the paper, and presents
the statements of the main results. In\ Section \ref{Section:necessity} we
obtain the necessity of the Rado conditions, and in Section \ref%
{Section:sufficient} we establish that certain polynomial configurations in $%
\mathbb{Z}$ are partition regular. These results are then applied in Section %
\ref{Section:equation} to show that the Rado conditions are both necessary
and sufficient in the case of the equations $x^{2}-xy+ax+by+cz=0$ and $%
x^{2}-y^{2}+ax+by+cz=0$ where $a,b,c\in \mathbb{Z}$ are such that $a+b+c=0$
or $abc=0$.

\section{Main results\label{Section:main}}

We let $\mathbb{N}$ be the set of \emph{strictly positive }integers, and set
$\mathbb{N}_{0}:=\mathbb{N}\cup \left\{ 0\right\} $. A finite coloring of $%
\mathbb{N}$ is a function $c:\mathbb{N}\rightarrow \left\{ 1,2,\ldots
,k\right\} $ for some $k\in \mathbb{N}$. In this case, $k$ is the number of
colors, and the elements of $\left\{ 1,2,\ldots ,k\right\} $ are the \emph{%
colors}. A subset $A$ of $\mathbb{N}$ is \emph{monochromatic }for the
coloring $c$ if there exists a color $i\in \left\{ 1,2,\ldots ,k\right\} $
such that, for every $a\in A$, $c\left( a\right) =i$.

\begin{definition}
Let $\mathcal{A}$ be a family of subsets of $\mathbb{N}$. We say that $%
\mathcal{A}$ is partition regular if, for every finite coloring $c$ of $%
\mathbb{N}$, there exist infinitely many $A\in \mathcal{A}$ that are
monochromatic for $c$.
\end{definition}

In the following we will use the same notation and terminology for
polynomials as in \cite[Section 3]{di_nasso_ramsey_2018}. Particularly, a
(multi)index is an $n$-tuple $\alpha =\left( \alpha _{1},\ldots ,\alpha
_{n}\right) \in \mathbb{N}_{0}^{n}$. We define the degree of $\alpha $ to be
$\left\vert \alpha \right\vert :=\alpha _{1}+\cdots +\alpha _{n}$, and the
(Hamming) length $\ell \left( \alpha \right) $ to be $\left\vert \left\{
i\in \left\{ 1,2,\ldots ,n\right\} :\alpha _{i}>0\right\} \right\vert $. If $%
x=\left( x_{1},\ldots ,x_{n}\right) $ is an $n$-tuple of variables, $%
x^{\alpha }$ represents the monomial $x_{1}^{\alpha _{1}}\cdots
x_{n}^{\alpha _{n}}$. We can then represent a polynomial $P\in \mathbb{Z}%
\left[ x_{1},\ldots ,x_{n}\right] $ in the variables $x_{1},\ldots ,x_{n}$
as $\sum_{\alpha }c_{\alpha }x^{\alpha }$, where $\alpha $ ranges in $%
\mathbb{N}_{0}^{n}$ and $c_\alpha=0$ for all but finitely many $\alpha$. We
define $\mathrm{Supp}\left( P\right) \subset \mathbb{N}_{0}^{n}$ to be the
(finite) set of indices $\alpha $ such that $c_{\alpha }$ is nonzero. There
is a natural partial order among indices, obtained by setting $\alpha \leq
\beta $ if and only if $\alpha _{i}\leq \beta _{i}$ for every $i\in \left\{
1,2,\ldots ,n\right\} $. An index $\alpha $ is \emph{maximal }for $P$ if it
is a maximal element in $\mathrm{Supp}\left( P\right) $ with respect to such
an order, and \emph{minimal }for $P$ if it is a minimal element in $\mathrm{%
Supp}\left( P\right) $. Accordingly, the term $c_{\alpha }x^{\alpha }$ in $P$
is maximal (respectively, minimal) if $\alpha $ is a maximal (respectively,
minimal) index for $P$. We say that a set $J$ of indices is homogeneous of
degree $d$ if $\left\vert \alpha \right\vert =d$ for every $\alpha \in J$.
Furthermore, for indices $\beta \leq \alpha $ we set%
\begin{equation*}
\alpha !=\prod_{i=1}^{n}(\alpha _{i}!)\qquad \text{and}\qquad \binom{\alpha
}{\beta }=\prod_{i=1}^{n}\binom{\alpha _{i}}{\beta _{i}}=\frac{\alpha !}{%
\beta !(\alpha -\beta )!}\text{.}
\end{equation*}%
%
%
%
%
%
%
%
%
%
%
%
%
%
%
%
%
%
%
%
%
%
%
For $P\in \mathbb{Z}\left[ x_{1},\ldots ,x_{n}\right] $ we define the
partial derivatives%
\begin{equation*}
\frac{\partial ^{\beta }P}{\partial x^{\beta }}:=\frac{\partial ^{\left\vert
\beta \right\vert }P}{\partial x_{1}^{\beta _{1}}\partial x_{2}^{\beta
_{2}}\cdots \partial x_{n}^{\beta _{n}}}\text{.}
\end{equation*}%
Note that
\begin{equation*}
\frac{\partial ^{\beta }x^{\alpha }}{\partial x^{\beta }}=\beta !\binom{%
\alpha }{\beta }x^{\alpha -\beta }
\end{equation*}
if $\beta \leq \alpha $ and it equals $0$ otherwise. For $r\in \mathbb{Z}$,
we consider the\emph{\ }polynomial $P^{\left( r\right) }\left( x_{1},\ldots
,x_{n}\right) =P(x_{1}+r,\ldots ,x_{n}+r)$. One can express the coefficients
of $P^{\left( r\right) }\left( x\right) $ in terms of the partial
derivatives of $P$ evaluated at $r$, as the following computation shows.
Suppose that $P\left( x\right) =\sum_{\alpha }c_{\alpha }x^{\alpha }$. Then
we have that%
\begin{eqnarray*}
P^{(r)}\left( x\right) &=&\sum_{\alpha }c_{\alpha }\left( x_{1}+r\right)
^{\alpha _{1}}\cdots \left( x_{n}+r\right) ^{\alpha _{n}} \\
&=&\sum_{\alpha }\sum_{\beta \leq \alpha }c_{\alpha }\binom{\alpha }{\beta }%
r^{\left\vert \alpha \right\vert -\left\vert \beta \right\vert }x^{\beta } \\
&=&\sum_{\beta }(\sum_{\alpha \geq \beta }c_{\alpha }\binom{\alpha }{\beta }%
r^{\left\vert \alpha \right\vert -\left\vert \beta \right\vert })x^{\beta }
\\
&=&\sum_{\beta }\frac{1}{\beta !}\frac{\partial ^{\beta }P}{\partial
x^{\beta }}\left( r,\ldots ,r\right) x^{\beta }\text{.}
\end{eqnarray*}%
If we let $\boldsymbol{r}$ be the $n$-tuple $\left( r,\ldots ,r\right) $,
then we can write%
\begin{equation*}
P^{\left( r\right) }\left( x\right) =\sum_{\beta }\frac{1}{\beta !}\frac{%
\partial ^{\beta }P}{\partial x^{\beta }}\left( \boldsymbol{r}\right)
x^{\beta }\text{.}
\end{equation*}%
In particular, one has that%
\begin{equation*}
P\left( x\right) =\sum_{\beta }\frac{1}{\beta !}\frac{\partial ^{\beta }P}{%
\partial x^{\beta }}\left( \boldsymbol{0}\right) x^{\beta }\text{.}
\end{equation*}%
We denote by $\tilde{P}\left( w\right) \in \mathbb{Z}\left[ w\right] $ the
monovariate polynomial $\tilde{P}\left( w\right) =P\left( w,\ldots ,w\right)
$.

For a prime $p\in \mathbb{N}$, we denote by $\mathbb{Z}_{p}$ the ring of $p$%
-adic integers; see \cite[Chapter 1, Section 1]{robert_course_2000}. It is
easy to see that, for $Q\left( w\right) \in \mathbb{Z}\left[ w\right] $, the
equation $Q\left( w\right) =0$ has an invertible solution in $\mathbb{Z}_{p}$
if and only if, for every $n\in \mathbb{N}$, the equation $Q\left( w\right)
=0$ has an invertible solution in $\mathbb{Z}/p^{n}\mathbb{Z}$; see \cite[%
Chapter 1, Section 6]{robert_course_2000}.

\begin{definition}
Fix $q\in \mathbb{N}$ with $q\geq 2$. Consider a function $f:\mathbb{N}%
^{n}\rightarrow \mathbb{Q}$. Define
\begin{equation*}
\mathcal{A}_{f}=\big\{\{a_{1},\ldots ,a_{n}\}\subseteq \mathbb{N}:f\left(
a_{1},\ldots ,a_{n}\right) =0\big\}\text{.}
\end{equation*}%
The equation $f\left( x_{1},\ldots ,x_{n}\right) =0$, and the function $f$,
are called:

\begin{itemize}
\item partition regular if the family $\mathcal{A}_{f}$ is partition regular;

\item $q$-partition regular if, for every $k\in \mathbb{N}$, the family
consisting of $\left\{ a_{1},\ldots ,a_{n}\right\} \in \mathcal{A}_{f}$ such
that $a_{1}\equiv \cdots \equiv a_{n}\equiv 0\mathrm{\ \mathrm{mod}}\ q^{k}$%
, is partition regular.
\end{itemize}

In particular, this definition applies when the function $f$ is a polynomial
with integer coefficients.
\end{definition}

It is clear that a homogeneous polynomial is partition regular if and only
if it is $q$-partition regular for every $q>1$.
We remark in passing that the converse does not hold, i.e., there are
polynomials (e.g. $f(x,y,z)=xy-z$) which are $q$-partition regular for every
$q>1$ but are not homogenous. Nevertheless, a $q$-partition regular equation
necessarily has zero constant term. On the other hand, a partition regular
equation might have a nonzero constant term, such as $x-y+z^{2}=1$ (this
follows from \cite[Theorem 5.2]{Bergelson_Leibman_Lesigne08} with $r=1$ and $%
p_{1}(z)=z^{2}-1$).


\begin{proposition}
\label{Proposition:scale}If the equation $P\left( x_{1},\ldots ,x_{n}\right)
=0$ is partition regular, then for every $r\in \mathbb{Z}\setminus \left\{
0\right\} $, the equations $P\left( x_{1}+r,\ldots ,x_{n}+r\right) =0$ and $%
P\left( \frac{1}{r}x_{1},\ldots ,\frac{1}{r}x_{n}\right) =0$ are also
partition regular. If the equation $P\left( x_{1},\ldots ,x_{n}\right) =0$
is $q$-partition regular for some $q\in \mathbb{N}\setminus \left\{
1\right\} $, then the equation $P\left(qx_{1},\ldots ,qx\right) =0$ is also $%
q$-partition regular.
\end{proposition}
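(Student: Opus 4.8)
The plan is to deduce all three assertions by a single device. Given a finite colouring $c$ of $\mathbb{N}$ intended for the \emph{new} equation, one transports it through the substitution in question to a finite colouring $c'$ of $\mathbb{N}$, applies the hypothesised (ordinary or $q$-)partition regularity of $P$ to $c'$ to get infinitely many $c'$-monochromatic solutions of $P=0$, and then pushes these forward through the inverse substitution to monochromatic solutions of the new equation. Each time, two points must be checked: that the inverse substitution lands in $\mathbb{N}$ (and, in the last case, in the required congruence class), and that it is injective on the relevant solution sets, so that ``infinitely many monochromatic solutions'' survives --- this being exactly what the definition of partition regularity requires.

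For the translation $P(x_{1}+r,\ldots ,x_{n}+r)=0$ with $r<0$ this is immediate: take $c'(m):=c(m-r)$, which is well defined since $m-r\geq 1$, and note that every $c'$-monochromatic solution $\{b_{1},\ldots ,b_{n}\}$ of $P=0$ produces the $c$-monochromatic solution $\{b_{1}-r,\ldots ,b_{n}-r\}\subseteq \mathbb{N}$ of the translated equation, with distinct solutions remaining distinct. For $r>0$ the shift $m\mapsto m-r$ can leave $\mathbb{N}$, so one introduces a single fresh colour $\star$ and sets $c'(m):=c(m-r)$ for $m>r$ and $c'(m):=\star$ for $1\leq m\leq r$. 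Then a $c'$-monochromatic solution of $P=0$ either has all of its entries in $\{1,\ldots ,r\}$ --- and that finite set has only finitely many subsets, hence bounds the number of such solutions --- or has all of its entries larger than $r$; partition regularity of $P$ furnishes infinitely many of the latter, which push forward as before. The scaling equation $P(\frac{1}{r}x_{1},\ldots ,\frac{1}{r}x_{n})=0$ is treated the same way: for $r>0$ take $c'(m):=c(rm)$, and observe that each $c'$-monochromatic solution $\{b_{1},\ldots ,b_{n}\}$ of $P=0$ gives the $c$-monochromatic set $\{rb_{1},\ldots ,rb_{n}\}\subseteq \mathbb{N}$, which solves the scaling equation because $P(\frac{1}{r}\cdot rb_{1},\ldots ,\frac{1}{r}\cdot rb_{n})=P(b_{1},\ldots ,b_{n})=0$. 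The case $r<0$ I would reduce to $r>0$ by composing with $x\mapsto -x$, which carries $P(\frac{1}{r}x)=0$ to $P(\frac{1}{|r|}x)=0$; this reduction rests on the fact that partition regularity of $P=0$ implies partition regularity of $P(-x_{1},\ldots ,-x_{n})=0$, and that is the one step I would need to argue separately.

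For the $q$-partition regular statement, fix $k\in \mathbb{N}$ and a finite colouring $c$ of $\mathbb{N}$; the goal is infinitely many $c$-monochromatic $\{a_{1},\ldots ,a_{n}\}\subseteq \mathbb{N}$ with $P(qa_{1},\ldots ,qa_{n})=0$ and $a_{i}\equiv 0\pmod{q^{k}}$. Here one buys the needed room by descending one level: set $c'(m):=c(m/q)$ when $q\mid m$ and $c'(m):=\star$ otherwise, and apply $q$-partition regularity of $P$ \emph{at level} $k+1$, obtaining infinitely many $c'$-monochromatic $\{b_{1},\ldots ,b_{n}\}$ with $P(b_{1},\ldots ,b_{n})=0$ and $b_{i}\equiv 0\pmod{q^{k+1}}$. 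Since $q\mid b_{i}$, the integers $a_{i}:=b_{i}/q$ are well defined, positive, distinct, satisfy $a_{i}\equiv 0\pmod{q^{k}}$ and $P(qa_{1},\ldots ,qa_{n})=0$, and form a $c$-monochromatic set; as $k$ and $c$ were arbitrary, $P(qx_{1},\ldots ,qx_{n})=0$ is $q$-partition regular.

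I do not expect any serious obstacle: the argument is elementary throughout. The only non-obvious ingredient is the ``fresh colour'' trick in the $r>0$ translation case, which confines every unwanted monochromatic solution to a fixed finite set and hence to a finite list; and the only loose end in the above plan is the invariance of partition regularity under $x\mapsto -x$, needed solely for negative $r$ in the scaling statement.
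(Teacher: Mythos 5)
Your argument is, in substance, the paper's own: the paper writes out only the $q$-partition-regularity assertion, using exactly your quotient colouring ($\tilde c(i)=c(i/q)$ when $q\mid i$, a fresh colour otherwise), and dismisses the remaining assertions as provable ``in a similar way''. Your treatment of the translated equation for either sign of $r$ (with the fresh-colour device confining the unwanted monochromatic solutions to the finitely many subsets of $\{1,\ldots ,r\}$), of the scaled equation for $r>0$, and of the $q$-statement are all correct; in the last case, invoking the hypothesis at level $k+1$ so that $a_i=b_i/q$ still satisfies $a_i\equiv 0\bmod q^{k}$ is in fact more careful than the paper's one-line proof, and you rightly track injectivity on solution sets so that ``infinitely many'' survives.

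The one loose end you flag, however, cannot be closed: partition regularity of $P=0$ over $\mathbb{N}$ does \emph{not} imply partition regularity of $P(-x_{1},\ldots ,-x_{n})=0$ over $\mathbb{N}$. Take $P(x,y,z)=xy-z$, which is partition regular (the paper itself notes it is even $q$-partition regular for every $q>1$); then $P(-x,-y,-z)=xy+z$, and the equation $xy+z=0$ has no solutions in $\mathbb{N}$ at all, hence is not partition regular. The same example shows that the obstruction lies in the statement rather than in your reduction: for $r<0$ the equation $P\bigl(\frac{1}{r}x_{1},\ldots ,\frac{1}{r}x_{n}\bigr)=0$ asks for solutions of $P$ at negative rational points, and with $P=xy-z$, $r=-1$, one gets exactly $xy+z=0$, which fails to be partition regular even though $P$ is. So the scaling assertion is only correct (and is only ever needed in the paper, e.g.\ in the proof of Lemma \ref{Lemma:equation2}, where the scaling factor is a positive integer) for $r>0$; on that range your proof is complete and coincides with the argument the paper intends when it says the remaining cases are similar. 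No argument via $x\mapsto -x$, or otherwise, can recover the negative-$r$ case.
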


\begin{proof}
We prove only the last assertion; the others can be established in a similar
way. Suppose $c:\mathbb{N}\rightarrow \{1,\dots ,k\}$ is an arbitrary
coloring. Consider a new coloring $\tilde{c}:\mathbb{N}\to\{1,\dots,k+1\}$
defined as $\tilde{c}(i)=c(i/q) $ if $i\equiv 0\bmod q$ and $\tilde{c}%
(i)=k+1 $ otherwise. Let $(a_{1},\dots ,a_{n})\in \mathbb{N}^{n}$ be a $%
\tilde{c}$-monochromatic solution to $P(x_{1},\dots ,x_{n})=0$ with all $%
a_{i}$ multiples of $q$. Then $(\frac{1}{q}a_{1},\dots ,\frac{1}{q}a_{n})$
is a $c$-monochromatic solution to $P\left( qx_{1},\ldots ,qx_{n}\right) =0$.
\end{proof}

The classical characterization of partition regularity for homogeneous \emph{%
linear }polynomials due to Rado \cite{rado_studien_1933} states that a
(homogeneous) linear polynomial $P(x)=a_1x_1+\cdots+a_nx_n$ is partition
regular if and only if there exists a set $J\subseteq\{1,\dots,n\}$ such
that $\sum_{j\in J}a_j=0$. This condition can be phrased without explicitly
using the coefficients of $P$ by saying that there exists a set $J\subseteq%
\mathrm{Supp}\left( P\right)$ such that
\begin{equation}  \label{eq_radooriginal}
\sum_{\alpha \in J}\frac1{\alpha!}\frac{\partial ^{\alpha }P}{\partial
x^{\alpha }}\left( \boldsymbol{0}\right) =0.
\end{equation}

For instance, the equation $x+y=3z$ is not partition regular. Rado's
condition can also be used to deduce that certain polynomial equations of
higher degree are not partition regular. We illustrate this point in the
following examples.

\begin{example}
\label{Example:homogeneousRado} The equation $x^2+y^2=3z^2$ is not partition
regular.

Indeed, given a coloring $c:\mathbb{N}\to\{1,\dots,k\}$, let $\tilde c:%
\mathbb{N}\to\{1,\dots,k\}$ be the coloring defined by $\tilde c(x)=c(x^2)$.
Whenever $(x,y,z)$ is a $\tilde c$-monochromatic solution to $x^2+y^2=3z^2$,
the triple $(x^2,y^2,z^2)$ is a $c$-monochromatic solution to $x+y=3z$.
Choosing a coloring $c$ which admits no monochromatic solution to $x+y=3z$
we constructed a coloring $\tilde c$ which admits no monochromatic solutions
to $x^2+y^2=3z^2$.
\end{example}

In fact, a variation of this idea can be extended to work for general
homogeneous polynomials. This observation was first made by Lefmann in \cite[%
Theorem 2.1]{Lefmann91}, and was also used in \cite[Corollary 3.9]%
{di_nasso_ramsey_2018}.

\begin{example}
\label{Example:multiplicativeRado} The equation $xy=z^3$ is not partition
regular.

Indeed, let $f:\mathbb{N}\to\mathbb{N}$ be a map satisfying $f(xy)=f(x)+f(y)$
for all $x,y\geq2$. Given a coloring $c:\mathbb{N}\to\{1,\dots,k\}$, let $%
\tilde c:\mathbb{N}\to\{1,\dots,k\}$ be the coloring defined by $\tilde
c(x)=c\big(f(x)\big)$. Whenever $(x,y,z)$ is a $\tilde c$-monochromatic
solution to $xy=z^3$, the triple $(f(x),f(y),f(z))$ is a $c$-monochromatic
solution to $x+y=3z$. Choosing a coloring $c$ which admits no monochromatic
solution to $x+y=3z$ we constructed a coloring $\tilde c$ which admits no
monochromatic solutions to $xy=z^3$ other than the trivial solution $(1,1,1)$%
.
\end{example}

The idea used in Example \ref{Example:multiplicativeRado} can also be
adapted to work for more general monomials. In fact, it is not necessary to
assume that the map $f$ satisfies $f(xy)=f(x)+f(y)$: it suffices that $%
f(xy)-f(x)-f(y)$ is bounded. This will be the case when $f(x)$ is the number
of digits of $x$ in base $q$ (for an arbitrary fixed $q$). The advantage of
using such $f$ is that the original proof of the necessity of the Rado
condition also involves a coloring which looks at the base $q$ expansion of $%
x$.

In what follows, we combine the two methods from Examples \ref%
{Example:homogeneousRado} and \ref{Example:multiplicativeRado} to generate
new necessary conditions for polynomial equations of higher degree to be
partition regular.
Before doing so we need some more definitions.

\begin{definition}
Let $\phi :\mathbb{Z}^{n}\rightarrow \mathbb{Z}$ be a positive linear map $%
\left( \alpha _{1},\ldots ,\alpha _{n}\right) \mapsto t_{1}\alpha
_{1}+\cdots +t_{n}\alpha _{n}$ for some $t_{1},\ldots ,t_{n}\in \mathbb{N}%
_{0}$.

\begin{itemize}
\item If $c$ is a finite coloring of $\mathbb{N}_{0}$, then we say that $%
\phi $ is $c$-monochromatic if $\left\{ t_{1},\ldots ,t_{n}\right\} $ is $c$%
-monochromatic;

\item If $P\in \mathbb{Z}\left[ x_{1},\ldots ,x_{n}\right] $, and $\left(
M_{0},\ldots ,M_{\ell }\right) $ is the increasing enumeration of $\phi
\left( \mathrm{Supp}\left( P\right) \right) $, then the partition of $%
\mathrm{Supp}\left( P\right) $ determined by $\phi $ is the ordered tuple $%
\left( J_{0},\ldots ,J_{\ell }\right) $, where $J_{i}=\left\{ \alpha \in
\mathrm{\mathrm{Supp}}\left( P\right) :\phi \left( \alpha \right)
=M_{i}\right\} $.
\end{itemize}
\end{definition}

Suppose that $P\in \mathbb{Z}\left[ x_{1},\ldots ,x_{n}\right] $ is a
polynomial.

\begin{definition}
A \emph{Rado partition} of $P$ is an ordered tuple $\left( J_{0},\ldots
,J_{\ell }\right) $ such that, for every finite coloring $c$ of $\mathbb{N}%
_{0}$, there exist infinitely many $c$-monochromatic positive linear maps $%
\phi :\mathbb{Z}^{n}\rightarrow \mathbb{Z}$ such that $\left( J_{0},\ldots
,J_{\ell }\right) $ is the partition of $\mathrm{Supp}\left( P\right) $
determined by $\phi $.
\end{definition}

Since $\phi$ must be a positive linear map, $J_\ell$ contains at least one
maximal index, and $J_0$ contains at least one minimal index.

\begin{definition}
A \emph{Rado set }for $P$ is a set $J\subseteq \mathrm{Supp}\left( P\right) $
such that there exists a Rado partition $\left( J_{0},\ldots ,J_{\ell
}\right) $ for $P$ such that $J=J_{i}$ for some $i\in \left\{ 0,1,\ldots
,\ell \right\} $.

A minimal (respectively, maximal) Rado set\emph{\ }for $P$ is a set $%
J\subseteq \mathrm{Supp}\left( P\right) $ such that there exists a Rado
partition $\left( J_{0},\ldots ,J_{\ell }\right) $ for $P$ with $J=J_{0} $
(respectively, $J=J_{\ell }$).
\end{definition}


\begin{example}
\label{Example:Rado-partition}Consider the polynomial $P\left(
x_1,x_2,x_3,x_4\right) =-x_{3}-x_{4}+x_{1}x_{2}+x_{1}x_{2}^{2}$. We have that%
\begin{equation*}
\mathrm{Supp}\left( P\right) =\left\{ \left( 0,0,1,0\right) ,\left(
0,0,0,1\right) ,\left( 1,1,0,0\right) ,\left( 1,2,0,0\right) \right\} \text{.%
}
\end{equation*}%
We claim that the partition $J=\left( J_{0},J_{1},J_{2},J_{3}\right) $
defined by%
\begin{eqnarray*}
J_{0} &=&\left\{ \left( 0,0,1,0\right) \right\} \\
J_{1} &=&\left\{ \left( 1,1,0,0\right) \right\} \\
J_{2} &=&\left\{ \left( 0,0,0,1\right) \right\} \\
J_{3} &=&\left\{ \left( 1,2,0,0\right) \right\}
\end{eqnarray*}%
is a Rado partition for $P$. Indeed, suppose that $c$ is a finite coloring
of $\mathbb{N}$. By Brauer's extension of the van der Waerden theorem on
arithmetic progressions, there exist $a,b\geq 3$ such that $\left\{
a,b,a+b-1,a+2b-2\right\} $ is $c$-monochromatic. We can consider the
corresponding $c$-monochromatic linear map $\phi :\mathbb{Z}^{4}\rightarrow
\mathbb{Z}$,
\begin{equation*}
\left( \alpha _{1},\alpha _{2},\alpha _{3},\alpha _{4}\right) \mapsto
a\alpha _{1}+b\alpha _{2}+\left( a+b-1\right) \alpha _{3}+\left(
a+2b-2\right) \alpha _{4}\text{.}
\end{equation*}%
Then%
\begin{equation*}
\phi \left( 0,0,1,0\right) <\phi \left( 1,1,0,0\right) <\phi \left(
0,0,0,1\right) <\phi \left( 1,2,0,0\right) \text{.}
\end{equation*}%
Therefore the partition of $\mathrm{Supp}\left( P\right) $ determined by $%
\phi $ is $J$.
\end{example}

\begin{notation}
Consider variables $z_{1},\ldots ,z_{n}$. For $\alpha \in \mathbb{Z}^{n}$
let $p_{\alpha }\left( z_{1},\ldots ,z_{n}\right) $ be the linear polynomial
$\alpha _{1}z_{1}+\cdots +\alpha _{n}z_{n}$. For $J\subseteq \mathbb{N}%
_{0}^{n}$, let $E_{J}$ be the $\mathbb{Q}$-linear span of $\left\{ p_{\alpha
-\beta }:\alpha ,\beta \in J\right\} $.
\end{notation}

\begin{proposition}
Suppose that $\left( J_{0},\ldots ,J_{\ell }\right) $ is a Rado partition
for $P$. Then one has that, for every $i\in \left\{ 0,1,\ldots ,\ell
\right\} $:

\begin{enumerate}
\item The set $J_i$ is convex, in the sense that if $\alpha^{(1)},\ldots
,\alpha^{(m)}\in J_{i}$ and $\lambda _{1},\ldots ,\lambda _{m}\in \mathbb{Q}$
are such that $\lambda _{1}+\cdots +\lambda _{m}=1$, and $\lambda
_{1}\alpha^{(1)}+\cdots +\lambda _{m}\alpha^{(m)}\in\mathrm{Supp}\left(
P\right) $, then $\lambda _{1}\alpha^{(1)}+\cdots +\lambda
_{m}\alpha^{(m)}\in J_{i}$;

\item every nonzero polynomial $p\left( z_{1},\ldots ,z_{n}\right)
=a_{1}z_{1}+\cdots +a_{n}z_{n}$ in $E_{J_{i}}$ is partition regular, i.e.\
there exist $\ell \geq 2$ and $1\leq i_{1}<i_{2}<\cdots <i_{\ell }\leq n$
such that $a_{i_{1}},\ldots ,a_{i_{\ell }}$ are nonzero and $%
a_{i_{1}}+\cdots +a_{i_{\ell }}=0$;

\item if $\alpha ,\beta \in J_{i}$ and $\ell \left( \alpha +\beta \right)
\leq 2$, then $\left\vert \alpha \right\vert =\left\vert \beta \right\vert $.
\end{enumerate}
\end{proposition}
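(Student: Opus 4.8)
The plan is to exploit the defining property of a Rado partition: for every finite coloring $c$ of $\mathbb{N}_0$ there are infinitely many $c$-monochromatic positive linear maps $\phi$ inducing the partition $(J_0,\dots,J_\ell)$ of $\mathrm{Supp}(P)$. The key elementary observation is that if $\phi$ induces this partition, then $\phi$ is constant on each $J_i$, say with value $M_i$, and the $M_i$ are strictly increasing. So all three parts reduce to statements that must hold simultaneously for ``many'' such $\phi$.

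For part (1), suppose $\alpha^{(1)},\dots,\alpha^{(m)}\in J_i$, the $\lambda_k$ sum to $1$, and $\gamma:=\sum_k\lambda_k\alpha^{(k)}\in\mathrm{Supp}(P)$. Pick a $\phi$ inducing the partition. Since $\phi$ is linear, $\phi(\gamma)=\sum_k\lambda_k\phi(\alpha^{(k)})=\sum_k\lambda_k M_i=M_i$. But $\gamma\in\mathrm{Supp}(P)$, and $\phi$ takes value $M_i$ only on $J_i$ among indices in $\mathrm{Supp}(P)$ (because the $M_j$ are distinct); hence $\gamma\in J_i$. First I would spell this out; it uses only one monochromatic $\phi$, no Ramsey input beyond existence.

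For part (2), fix a nonzero $p=p_{\alpha-\beta}$ with $\alpha,\beta\in J_i$ (the general element of $E_{J_i}$ is a rational linear combination of such, and partition regularity of a linear form only depends on having a nonempty zero-sum subset of its support, which is preserved under the relevant manipulations — but I would argue directly for each $p_{\alpha-\beta}$ and then note closure; actually cleaner: show every $p_{\alpha-\beta}$ with $\alpha,\beta\in J_i$ is partition regular, then invoke that a $\mathbb{Q}$-linear combination of such forms is again of ``Rado type'' — here I would instead just prove the statement for spanning generators and remark that the coefficient vectors of all of $E_{J_i}$ are integer multiples of these once cleared of denominators, so it suffices). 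By Rado's theorem (stated in the excerpt), $p=\sum_j(\alpha_j-\beta_j)z_j$ is partition regular iff some nonempty subset $S$ of $\{j:\alpha_j\neq\beta_j\}$ has $\sum_{j\in S}(\alpha_j-\beta_j)=0$. Equivalently, $\phi(\alpha)=\phi(\beta)$ for the positive linear map $\phi$ with coefficient vector $(t_1,\dots,t_n)$ where $t_j=1$ for $j\in S$ and $t_j=0$ otherwise — but I need this for a $c$-monochromatic $\phi$. So: given a coloring, use the Rado partition to get a $c$-monochromatic $\phi$ with coefficients $t_1,\dots,t_n$ inducing $(J_0,\dots,J_\ell)$; then $\phi(\alpha)=M_i=\phi(\beta)$, i.e.\ $\sum_j t_j(\alpha_j-\beta_j)=0$. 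Since $\{t_1,\dots,t_n\}$ is $c$-monochromatic and the coefficient of $z_j$ in $p$ is $\alpha_j-\beta_j$, grouping the $z_j$'s by the value of $t_j$ realizes a $c$-monochromatic solution of $p=0$ (set $z_j=t_j$). As $c$ was arbitrary and there are infinitely many such $\phi$ (hence infinitely many solutions), $p$ is partition regular. For a general element of $E_{J_i}$, which is $\sum_{k}c_k p_{\alpha^{(k)}-\beta^{(k)}}$, the same $\phi$ still satisfies $\sum_j t_j\cdot(\text{coefficient of }z_j)=\sum_k c_k(\phi(\alpha^{(k)})-\phi(\beta^{(k)}))=0$, so the very same argument ($z_j=t_j$) works verbatim. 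This is the cleanest route and I would present it this way.

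For part (3), suppose $\alpha,\beta\in J_i$ with $\ell(\alpha+\beta)\le 2$, and assume toward a contradiction that $|\alpha|\neq|\beta|$, say $|\alpha|>|\beta|$. The constraint $\ell(\alpha+\beta)\le 2$ means at most two coordinates are involved, so there are indices $j,k$ with $\alpha,\beta$ supported on $\{j,k\}$ (possibly $j=k$). For any $\phi$ inducing the partition, $\phi(\alpha)=\phi(\beta)$, i.e.\ $t_j\alpha_j+t_k\alpha_k=t_j\beta_j+t_k\beta_k$. I would use the Rado partition with a color class large enough (via van der Waerden / Brauer, exactly as in Example~\ref{Example:Rado-partition}) to force $t_j,t_k$ to be a long arithmetic-progression-like pattern, or more simply: choose the coloring $c$ to be injective on a huge range so the monochromatic $\phi$'s must have $t_j=t_k=:t$; then $t|\alpha|=\phi(\alpha)=\phi(\beta)=t|\beta|$ with $t>0$ forces $|\alpha|=|\beta|$. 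The subtlety is that $c$ must be a genuine finite coloring of all of $\mathbb{N}_0$, not injective everywhere; but the definition of Rado partition demands monochromatic $\phi$ for \emph{every} finite coloring, and among these is, for each $N$, the coloring that is injective on $\{1,\dots,N\}$ and constant on the tail — using infinitely many $\phi$, at least one has all its (finitely many distinct) $t$-values inside a prescribed finite window on which $c$ is injective, forcing $t_j=t_k$. Pinning down this last maneuver — extracting from ``infinitely many monochromatic $\phi$ for a well-chosen coloring'' the conclusion $t_j=t_k$ — is the main obstacle; everything else is bookkeeping with linearity of $\phi$ and Rado's criterion. I expect the intended argument may instead phrase it via: if $|\alpha|\ne|\beta|$ then the linear polynomial $p_{\alpha-\beta}$ (supported on $\le 2$ variables, with nonzero coefficient sum) cannot be partition regular by part (2), contradiction — indeed a $2$-variable linear form $az_j+bz_k$ is partition regular iff $a+b=0$, and $a+b=|\alpha|-|\beta|\neq 0$; the $\le 1$-variable case is even easier. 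So part (3) is actually an immediate corollary of part (2), and I would present it that way, noting $\ell(\alpha+\beta)\le 2$ guarantees $p_{\alpha-\beta}\in E_{J_i}$ has support size $\le 2$ so the only way to be partition regular is coefficient-sum zero, i.e.\ $|\alpha|=|\beta|$.
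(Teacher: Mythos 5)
Your proposal is correct and follows essentially the same route as the paper: part (1) by linearity of $\phi$ (affine combinations are preserved, and the values $M_j$ separate the $J_j$'s), part (2) by observing that the coefficient tuple $(t_1,\ldots,t_n)$ of any monochromatic $\phi$ inducing the partition is itself a monochromatic zero of every $p\in E_{J_i}$, and part (3) as an immediate corollary of part (2), since a linear form in at most two variables satisfying the Rado condition must be a multiple of $z_j-z_k$, forcing $|\alpha|=|\beta|$. The coloring-based detour you first sketch for (3) is unnecessary, as you yourself conclude; the corollary-of-(2) argument you settle on is exactly the paper's.
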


\begin{proof}
\begin{enumerate}
\item This follows from the definitions and the fact that linear maps
preserve affine combinations.

\item By the definition of Rado partition, for every finite coloring $c$ of $%
\mathbb{N}$ there exists a positive linear map $\phi:\mathbb{Z}^n\to\mathbb{Z%
}$, say $\phi:(\alpha_1,\dots,\alpha_n)\mapsto
t_1\alpha_1+\cdots+t_n\alpha_n $, such that $\{t_1,\dots,t_n\}$ is $c$%
-monochromatic and $\phi(\alpha)=\phi(\beta)$ for every $\alpha,\beta\in J_i$%
. Therefore $p_{\alpha-\beta}(t_1,\dots,t_n)=0$ whenever $\alpha,\beta\in
J_i $, and hence $p(t_1,\dots,t_n)=0$ whenever $p\in E_{J_i}$.

\item If $\ell(\alpha+\beta)\leq2$, then the polynomial $p_{\alpha-\beta}$
depends on at most two variables, and hence if it is partition regular must
take the form $p_{\alpha-\beta}(z_1,\dots,z_n)=\lambda(z_i-z_j)$ for some $%
\lambda\in\mathbb{N}_0$ and some $i\neq j$. Consequently, $%
|\alpha|-|\beta|=p_{\alpha-\beta}(1,\dots,1)=0$ as required.
\end{enumerate}
\end{proof}

\begin{definition}
A\emph{\ lower Rado functional }of order $m\in \mathbb{N}_{0}$ for $P\in
\mathbb{Z}\left[ x_{1},\ldots ,x_{n}\right] $ is a tuple $\left(
J_{0},\ldots ,J_{\ell },d_{1},\ldots ,d_{m}\right) $ for some $\ell \geq m$
and $d_{1},\ldots ,d_{m}\in \mathbb{N}$ such that, for every finite coloring
$c$ of $\mathbb{N}$ and for every $n\in \mathbb{N}$, there exist infinitely
many $c$-monochromatic positive linear maps $\phi :\mathbb{Z}^{n}\rightarrow
\mathbb{Z}$, $\left( \alpha _{1},\ldots ,\alpha _{n}\right) \mapsto
t_{1}\alpha _{1}+\cdots +t_{n}\alpha _{n}$ such that $\left( J_{0},\ldots
,J_{\ell }\right) $ is the partition of $\mathrm{Supp}\left( P\right) $
determined by $\phi $ and, if $\left( M_{0},\ldots ,M_{\ell }\right) $ is
the increasing enumeration of $\phi \left( \mathrm{Supp}\left( P\right)
\right) $, then $M_{i}-M_{0}=d_{i}$ for $i\in \left\{ 1,2,\ldots ,m\right\} $%
, and $M_{m+1}-M_{m}\geq n$.
\end{definition}

\begin{definition}
An\emph{\ upper Rado functional }of order $m\in \mathbb{N}_{0}$ for $P\in
\mathbb{Z}\left[ x_{1},\ldots ,x_{n}\right] $ is a tuple $\left(
J_{0},\ldots ,J_{\ell },d_{0},\ldots ,d_{m-1}\right) $ for some $\ell \geq m$
and $d_{0},\ldots ,d_{m-1}\in \mathbb{N}$ such that, for every finite
coloring $c$ of $\mathbb{N}$ and for every $n\in \mathbb{N}$, there exist
infinitely many $c$-monochromatic positive linear maps $\phi :\mathbb{Z}%
^{n}\rightarrow \mathbb{Z}$, $\left( \alpha _{1},\ldots ,\alpha _{n}\right)
\mapsto t_{1}\alpha _{1}+\cdots +t_{n}\alpha _{n}$ such that $\left( J_{\ell
},\ldots ,J_{0}\right) $ is the partition of $\mathrm{Supp}\left( P\right) $
determined by $\phi $ and, if $\left( M_{\ell },\ldots ,M_{0}\right) $ is
the increasing enumeration of $\phi \left( \mathrm{Supp}\left( P\right)
\right) $, then $M_{i}-M_{m}=d_{i}$ for $i\in \left\{ 0,1,\ldots
,m-1\right\} $, and $M_{m}-M_{m+1}\geq n$.
\end{definition}

In Remark \ref{Remark:partition} below we give a simpler description of Rado
functionals in the language of non-standard analysis.

\begin{example}
Consider as in Example \ref{Example:Rado-partition} the polynomial $P\left(
x_{1},x_{2},x_{3},x_{4}\right) =-x_{3}-x_{4}+x_{1}x_{2}+x_{1}x_{2}^{2}$ and
the Rado partition $\left( J_{0},J_{1},J_{2},J_{3}\right) $ defined there.
We claim that $\left( J_{3},J_{2},J_{1},J_{0},2\right) $ is an upper Rado
functional of order $1$ for $P$, and that $\left(
J_{0},J_{1},J_{2},J_{3},1\right) $ is a lower Rado functional of order $1$
for $P$. Indeed, suppose that $c$ is a finite coloring of $\mathbb{N}$, and $%
n\in \mathbb{N}$. By Brauer's extension of the van der Waerden theorem on
arithmetic progressions there exist $a,b\geq n+2$ such that $\left\{
a,b,a+b-1,a+2b-2\right\} $ is $c$-monochromatic. Consider the linear
functional $\phi :\mathbb{Z}^{4}\rightarrow \mathbb{Z}$,%
\begin{equation*}
\left( \alpha _{1},\alpha _{2},\alpha _{3},\alpha _{4}\right) \mapsto
a\alpha _{1}+b\alpha _{2}+\left( a+b-1\right) \alpha _{3}+\left(
a+2b-2\right) \alpha _{4}\text{.}
\end{equation*}%
Then we have that%
\begin{equation*}
\phi \left( 0,0,1,0\right) <\phi \left( 1,1,0,0\right) <\phi \left(
0,0,0,1\right) <\phi \left( 1,2,0,0\right) \text{.}
\end{equation*}%
Furthermore,%
\begin{eqnarray*}
\phi \left( 1,2,0,0\right) -\phi \left( 0,0,0,1\right) &=&2 \\
\phi \left( 0,0,0,1\right) -\phi \left( 1,1,0,0\right) &=&b-2\geq n \\
\phi \left( 1,1,0,0\right) -\phi \left( 0,0,1,0\right) &=&1\text{.}
\end{eqnarray*}%
This witnesses that $\left( J_{3},J_{2},J_{1},J_{0},2\right) $ is an upper
Rado functional of order $1$ for $P$, and that $\left(
J_{0},J_{1},J_{2},J_{3},1\right) $ is a lower Rado functional of order $1$
for $P$.
\end{example}

\begin{example}
Consider the polynomial $P\left( x,y,z,w\right) =xy^{2}+yz+w$. Then we have
that%
\begin{equation*}
\mathrm{Supp}\left( P\right) =\left\{ \left( 1,2,0,0\right) ,\left(
0,1,1,0\right) ,\left( 0,0,0,1\right) \right\} \text{.}
\end{equation*}%
We claim that $\left( J_{0},J_{1},J_{2},1,2\right) $ is a lower Rado
functional of order $2$ for $P$, where%
\begin{eqnarray*}
J_{0} &=&\left\{ \left( 0,0,0,1\right) \right\} \\
J_{1} &=&\left\{ \left( 0,1,1,0\right) \right\} \\
J_{2} &=&\left\{ \left( 1,2,0,0\right) \right\} \text{.}
\end{eqnarray*}%
Indeed, suppose that $c$ is a finite coloring of $\mathbb{N}$. By Brauer's
extension of the van der Waerden theorem there exist $a,b\geq n+2$ such that
$\left\{ a,b,a+b-1,a+2b-2\right\} $ is $c$-monochromatic. Consider the
linear functional $\phi :\mathbb{Z}^{4}\rightarrow \mathbb{Z}$,%
\begin{equation*}
\left( \alpha _{1},\alpha _{2},\alpha _{3},\alpha _{4}\right) \mapsto
a\alpha _{1}+b\alpha _{2}+\left( a+b-1\right) \alpha _{2}+\left(
a+2b-2\right) \alpha _{3}\text{.}
\end{equation*}%
Then we have that%
\begin{equation*}
\phi \left( 0,0,0,1\right) +2=\phi \left( 0,1,1,0\right) +1=\phi \left(
1,2,0,0\right) =a+2b\text{.}
\end{equation*}%
This witnesses that $\left( J_{0},J_{1},J_{2},1,2\right) $ is a lower Rado
functional of order $2$ for $P$.
\end{example}

We consider the following generalizations of Rado's criterion of partition
regularity.

\begin{definition}
\label{Definition:maximal-Rado}A polynomial $P\left( x\right) =\sum_{\alpha
}c_{\alpha }x^{\alpha }\in \mathbb{Z}\left[ x_{1},\ldots ,x_{n}\right] $
satisfies the\emph{\ maximal Rado condition }if for every $q\in \mathbb{N}%
\setminus \left\{ 1\right\} $ there exists an upper Rado functional $\left(
J_{0},\ldots ,J_{\ell },d_{0},d_{1},\ldots ,d_{m-1}\right) $ for $P$ such
that, setting $d_{m}:=0$, the polynomial
\begin{equation*}
w\mapsto\sum_{i=0}^{m}q^{d_{i}}\sum_{\alpha \in J_{i}}c_{\alpha
}w^{\left\vert \alpha \right\vert }
\end{equation*}%
has a root in $\left[ 1,q\right] $.
\end{definition}

\begin{definition}
\label{Definition:minimal-Rado}Let $P$ be a polynomial such that $\tilde{P}$
is nonzero and splits as a product of linear factors over $\mathbb{Z}$. We
say that $P$ satisfies the \emph{minimal Rado condition} if for every prime $%
p\in \mathbb{N}$ there exists a root $a\in \mathbb{Z}$ of $\tilde{P}$ and a
minimal Rado functional $\left( J_{0},\ldots ,J_{\ell },d_{1},\ldots
,d_{m}\right) $ for $P^{\left( a\right) }$ such that, setting $d_{0}:=0$,
the equation
\begin{equation*}
\sum_{i=0}^{m}p^{d_{i}}\sum_{\alpha \in J_{i}}\frac{1}{\alpha !}\frac{%
\partial ^{\alpha }P}{\partial x^{\alpha }}\left( \boldsymbol{a}\right)
w^{\left\vert \alpha \right\vert }=0
\end{equation*}%
has an invertible solution in $\mathbb{Z}_{p}$.
\end{definition}

\begin{lemma}
\label{Lemma:minimal-Rado}Let $P$ be a polynomial such that $\tilde{P}$ is
nonzero and splits as a product of linear factors over $\mathbb{Z}$ and $P$
satisfies the minimal Rado condition.
Then there exists a root $a\in \mathbb{%
Z}$ of $\tilde{P}$ and a minimal Rado set $J$ of $P^{\left( a\right) }$ such
that the equation%
\begin{equation*}
\sum_{\alpha \in J}\frac{1}{\alpha !}\frac{\partial ^{\alpha }P}{\partial
x^{\alpha }}\left( \boldsymbol{a}\right) w^{\left\vert \alpha \right\vert }=0
\end{equation*}%
has a nonzero solution in $\mathbb{Z}/p\mathbb{Z}$ for infinitely many
primes $p\in \mathbb{Z}$. If $J$ is homogeneous, this implies that%
\begin{equation*}
\sum_{\alpha \in J}\frac{1}{\alpha !}\frac{\partial ^{\alpha }P}{\partial
x^{\alpha }}\left( \boldsymbol{a}\right) =0\text{.}
\end{equation*}
\end{lemma}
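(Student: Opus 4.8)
The plan is to unwind the definition of the minimal Rado condition and extract the desired conclusion by a pigeonhole/limiting argument over primes. Fix, for each prime $p$, a root $a_p \in \mathbb{Z}$ of $\tilde{P}$ and a minimal Rado functional $\left(J_0^{(p)},\ldots,J_{\ell_p}^{(p)},d_1^{(p)},\ldots,d_{m_p}^{(p)}\right)$ for $P^{(a_p)}$, together with an invertible solution $w_p \in \mathbb{Z}_p$ of the equation in Definition~\ref{Definition:minimal-Rado}. Since $\tilde{P}$ has only finitely many integer roots, by pigeonhole there is a fixed root $a$ of $\tilde{P}$ such that $a_p = a$ for infinitely many $p$; restrict to those primes. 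The tuples $\left(J_0^{(p)},\ldots,J_{\ell_p}^{(p)}\right)$ are ordered partitions of the finite set $\mathrm{Supp}(P^{(a)})$, so there are only finitely many possibilities for the underlying ordered partition (ignoring the $d_i$), and again by pigeonhole we may pass to an infinite set $S$ of primes along which the ordered partition $\left(J_0,\ldots,J_\ell\right)$ of $\mathrm{Supp}(P^{(a)})$ is constant. In particular $J := J_0$ is a minimal Rado set for $P^{(a)}$, which is the set promised in the statement. Note that, writing $b_\alpha := \frac{1}{\alpha!}\frac{\partial^\alpha P}{\partial x^\alpha}(\boldsymbol{a})$, these are exactly the (integer) coefficients of $P^{(a)}$, by the Taylor expansion displayed earlier in the paper.

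Next I reduce the $\mathbb{Z}_p$-solution to a solution mod $p$. For $p \in S$, Definition~\ref{Definition:minimal-Rado} gives an invertible $w_p \in \mathbb{Z}_p$ with $\sum_{i=0}^{m} p^{d_i} \sum_{\alpha \in J_i} b_\alpha w_p^{|\alpha|} = 0$ (here the $d_i = d_i^{(p)}$ may still vary with $p$, but $d_0 = 0$ always). Reducing this equation modulo $p$ kills every term with $d_i \geq 1$, i.e.\ every term with $i \geq 1$, and leaves $\sum_{\alpha \in J_0} b_\alpha \overline{w_p}^{|\alpha|} \equiv 0 \pmod p$, where $\overline{w_p} \in \mathbb{Z}/p\mathbb{Z}$ is the reduction of $w_p$; since $w_p$ is invertible in $\mathbb{Z}_p$, $\overline{w_p}$ is a nonzero element of $\mathbb{Z}/p\mathbb{Z}$. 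This is precisely the asserted nonzero solution mod $p$ of $\sum_{\alpha \in J} b_\alpha w^{|\alpha|} = 0$, and it holds for all $p \in S$, hence for infinitely many primes.

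Finally, suppose $J$ is homogeneous, say of degree $d$, so that $\sum_{\alpha \in J} b_\alpha w^{|\alpha|} = \left(\sum_{\alpha \in J} b_\alpha\right) w^d =: N w^d$ with $N \in \mathbb{Z}$. For each $p \in S$ we have $N \overline{w_p}^{\,d} \equiv 0 \pmod p$ with $\overline{w_p} \neq 0$ in the field $\mathbb{Z}/p\mathbb{Z}$; since $\mathbb{Z}/p\mathbb{Z}$ is an integral domain, $\overline{w_p}^{\,d} \neq 0$, hence $p \mid N$. As this holds for infinitely many primes $p$, we conclude $N = 0$, i.e.\ $\sum_{\alpha \in J} \frac{1}{\alpha!}\frac{\partial^\alpha P}{\partial x^\alpha}(\boldsymbol{a}) = 0$, as claimed.

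The only real subtlety — the "main obstacle," though it is minor — is bookkeeping: one must be careful that in the definition of minimal Rado functional the index $d_0$ is set to $0$ (so the $J_0$-block survives reduction mod $p$ and nothing else does), that the two layers of pigeonhole (first on the root $a$, then on the ordered partition) are applied in the right order so that $J_0$ genuinely stabilizes, and that $\overline{w_p}$ is nonzero because invertibility in $\mathbb{Z}_p$ is equivalent to nonvanishing of the reduction mod $p$ (as recalled in the excerpt via the cited reference). No deep input is needed beyond the finiteness of the root set of $\tilde P$ and the finiteness of the set of ordered partitions of $\mathrm{Supp}(P^{(a)})$.
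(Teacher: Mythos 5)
Your proposal is correct and follows essentially the same argument as the paper: reduce the invertible $\mathbb{Z}_p$-solution modulo $p$ (using $d_0=0$ and $d_i\geq 1$ for $i\geq 1$) to get a nonzero solution of the $J_0$-equation in $\mathbb{Z}/p\mathbb{Z}$, pigeonhole over the finitely many roots of $\tilde{P}$ and the finitely many possible sets $J_0$ to stabilize $a$ and $J$ along infinitely many primes, and in the homogeneous case conclude that an integer divisible by infinitely many primes is zero. The only cosmetic difference is that you pigeonhole on the entire ordered partition rather than just on the minimal Rado set, which changes nothing of substance.
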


\begin{proof}
Fix a prime number $p$. Then since $P$ satisfies the minimal Rado condition,
there exists a root $a\in \mathbb{Z}$ of $\tilde{P}$ and a minimal Rado
functional $\left( J_{0},\ldots ,J_{\ell },d_{1},\ldots ,d_{m}\right) $ for $%
P^{\left( a\right) }$ such that, setting $d_{0}:=0$, the equation
\begin{equation*}
\sum_{i=0}^{m}p^{d_{i}}\sum_{\alpha \in J_{i}}\frac{1}{\alpha !}\frac{%
\partial ^{\alpha }P}{\partial x^{\alpha }}\left( \boldsymbol{a}\right)
w^{\left\vert \alpha \right\vert }=0
\end{equation*}%
has an invertible solution in $\mathbb{Z}_{p}$. Considering the canonical
quotient map $\mathbb{Z}_{p}\rightarrow \mathbb{Z}/p\mathbb{Z}$, we obtain
that the equation%
\begin{equation*}
\sum_{\alpha \in J_{0}}\frac{1}{\alpha !}\frac{\partial ^{\alpha }P}{%
\partial x^{\alpha }}\left( \boldsymbol{a}\right) w^{\left\vert \alpha
\right\vert }=0
\end{equation*}%
has a nonzero solution in $\mathbb{Z}/p\mathbb{Z}$. Notice that, by
definition, $J_{0}$ is a minimal Rado set of $P^{\left( a\right) }$.

As $\tilde{P}$ has finitely many roots, there exists a root $a$ of $\tilde{P}
$ that is obtained as above from infinitely many primes $p$. For this root $%
a $, as $P^{\left( a\right) }$ has finitely many minimal Rado sets, there
exists a minimal Rado set $J_{0}$ of $P^{\left( a\right) }$ that is obtained
as above from infinitely many primes $p$. This concludes the proof of the
first assertion.

In the case when $J$ is homogeneous of degree $d$, we have that%
\begin{equation*}
\sum_{\alpha \in J_{0}}\frac{1}{\alpha !}\frac{\partial ^{\alpha }P}{%
\partial x^{\alpha }}\left( \boldsymbol{a}\right) w^{\left\vert \alpha
\right\vert }=\sum_{\alpha \in J_{0}}\frac{1}{\alpha !}\frac{\partial
^{\alpha }P}{\partial x^{\alpha }}\left( \boldsymbol{a}\right) w^{d}\text{.}
\end{equation*}%
Therefore, the equation%
\begin{equation*}
\sum_{\alpha \in J_{0}}\frac{1}{\alpha !}\frac{\partial ^{\alpha }P}{%
\partial x^{\alpha }}\left( \boldsymbol{a}\right) w^{\left\vert \alpha
\right\vert }=0
\end{equation*}%
has a nonzero solution in $\mathbb{Z}/p\mathbb{Z}$ if and only if%
\begin{equation*}
\sum_{\alpha \in J_{0}}\frac{1}{\alpha !}\frac{\partial ^{\alpha }P}{%
\partial x^{\alpha }}\left( \boldsymbol{a}\right) \equiv 0\mathrm{\mathrm{\
\mathrm{mod}}\ }p\text{.}
\end{equation*}%
If this holds for infinitely many primes $p$, this implies that%
\begin{equation*}
\sum_{\alpha \in J_{0}}\frac{1}{\alpha !}\frac{\partial ^{\alpha }P}{%
\partial x^{\alpha }}\left( \boldsymbol{a}\right) =0\text{.}
\end{equation*}%
Thus, the second assertion follows from the first one.
\end{proof}

In the next section, we will prove the following \emph{necessary }conditions
for partition regularity in terms of the Rado conditions.

\begin{theorem}
\label{Theorem:necessary}Fix $P\in \mathbb{Z}\left[ x_{1},\ldots ,x_{n}%
\right] $. Suppose that $P$ is partition regular. Then:

\begin{enumerate}
\item $P$ satisfies the maximal Rado condition;

\item if $\tilde{P}$ is nonzero and splits over $\mathbb{Z}$ as a product of
linear factors, then $P$ satisfies the minimal Rado condition.
\end{enumerate}
\end{theorem}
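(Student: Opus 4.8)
The plan is to combine the two colouring ideas from Examples \ref{Example:homogeneousRado} and \ref{Example:multiplicativeRado} with the base-$q$ colouring used in the classical proof of Rado's theorem. Fix $q\in\mathbb N\setminus\{1\}$ (respectively a prime $p$) and suppose, towards a contradiction, that $P$ is partition regular but fails the maximal (respectively minimal) Rado condition for that $q$ (respectively $p$). The key device is a map $f\colon\mathbb N\to\mathbb N_0$ recording the position of the last (or first) nonzero base-$q$ digit, i.e.\ writing $y=\sum_{i\ge 0}y_iq^i$ with $y_i\in\{0,\dots,q-1\}$, one puts $v_q(y)=\min\{i:y_i\neq 0\}$ for the minimal condition, and uses the number of digits $\lfloor\log_q y\rfloor$ for the maximal condition. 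Given an arbitrary finite colouring $c$ of $\mathbb N$, one builds an auxiliary colouring $\tilde c$ of $\mathbb N$ that records simultaneously (i) the colour class under $c$ of the base-$q$ digit-length of $y$ (this is where the linear maps $\phi$ enter — their coefficients $t_1,\dots,t_n$ are extracted from a $c$-monochromatic Brauer/van der Waerden configuration, so that $\phi(\mathrm{Supp}(P))$ is automatically $c$-monochromatic as a set), (ii) the relevant least significant nonzero digit of $y$ in base $q$, and (iii) enough coarse information about $y$ to control the carries when one multiplies out a monomial. The point is that a $\tilde c$-monochromatic solution $(a_1,\dots,a_n)$ to $P=0$ then forces the reductions mod $p$ (or mod $q$) of the $a_i$, together with the orders of magnitude $\log_q a_i$, to be governed by a single linear functional $\phi$, whose induced partition of $\mathrm{Supp}(P)$ is a Rado partition; reading off the dominant (respectively least) terms of $P(a_1,\dots,a_n)$ then yields a solution in $\mathbb Z/p^k\mathbb Z$ (respectively in $[1,q]$, after taking a limit) to exactly the equation appearing in Definition \ref{Definition:minimal-Rado} (respectively \ref{Definition:maximal-Rado}), contradicting the assumed failure of the Rado condition.

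More precisely, for part (2): after passing to $P^{(a)}$ for the root $a$ of $\tilde P$ that survives (using that if all $a_i\equiv a\bmod p^k$ for all $k$ we can divide out, exactly as in the linear case, and that $\tilde P(a)=0$ is forced because $P(a_i,\dots,a_i)$ would otherwise be a nonzero bounded-valuation quantity), the solution is moved to one where the shifted variables $x_i=a_i-a$ have controlled $p$-adic valuations $d_i=v_p(x_i)-\min_j v_p(x_j)$ realising the data of a minimal Rado functional, and the lowest-valuation part of the expansion $P^{(a)}(x)=\sum_\beta \frac1{\beta!}\frac{\partial^\beta P}{\partial x^\beta}(\boldsymbol a)x^\beta$ must vanish mod a high power of $p$; matching this with the $J_i$ and $d_i$ gives an invertible solution in $\mathbb Z/p^k\mathbb Z$ for every $k$, hence in $\mathbb Z_p$. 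For part (1) one does the symmetric argument at the top, using digit-length instead of valuation and extracting a root in $[1,q]$ by compactness of $[1,q]$ after a diagonal/limit argument over growing $k$; here the roles of "minimal" and "maximal" are swapped and no shift $P^{(a)}$ is needed, which is why the maximal condition holds unconditionally while the minimal one needs the splitting hypothesis on $\tilde P$.

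The main obstacle, and the place where care is genuinely required, is controlling the carries: when one expands $a_i^{\alpha_i}$ in base $q$, the leading digit-block and the trailing digit-block do not behave multiplicatively on the nose, only up to bounded error, and one must design $\tilde c$ with enough (still finitely many) colour classes — encoding the top few and bottom few base-$q$ digits of each $a_i$, not merely one digit — so that a monochromatic solution pins down the leading coefficient and the $p$-adic unit part of each relevant monomial $x^\alpha$ exactly. Ensuring that the resulting combined colouring is still finite, and that Brauer's theorem can be applied to produce the monochromatic arithmetic-progression-with-common-difference data needed to realise an arbitrary Rado partition (so that every Rado functional is actually witnessed), is the technical heart of the argument; everything else is bookkeeping with the partial-derivative expansion of $P^{(r)}$ recorded in Section \ref{Section:main}.
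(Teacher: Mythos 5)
Your overall strategy is essentially the finitistic shadow of the paper's argument: the paper also extracts the linear maps $\phi$ from the positions of the first (for the maximal condition) or last (for the minimal condition) nonzero base-$q$ digits of a monochromatic solution, reads the root in $[1,q]$ off the common leading digits, and for part (2) first locates a root $a$ of $\tilde{P}$ that is $p$-adically close to the common residue of the solution and passes to $P^{(a)}$ (Proposition \ref{Lemma:root}, Theorems \ref{Theorem:maximal} and \ref{Theorem:minimal}). The difference is that the paper runs this on a single tuple of infinite, indiscernible hypernaturals, while you try to run it coloring by coloring, and that is where your sketch has a genuine gap.

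An upper or lower Rado functional is one fixed tuple $\left( J_{0},\ldots ,J_{\ell },d_{0},\ldots ,d_{m-1}\right)$ that must be witnessed by $c$-monochromatic maps for \emph{every} finite coloring $c$ and \emph{every} $n$ (with gap at least $n$), and Definitions \ref{Definition:maximal-Rado} and \ref{Definition:minimal-Rado} require the associated equation to be solved by this single tuple. Your construction, for each fixed coloring $c$ and each precision parameter $k$, produces witnesses whose induced partition of $\mathrm{Supp}(P)$ and whose differences $d_{i}$ may depend on $c$ and $k$; since the $d_{i}$ range over all of $\mathbb{N}$, pigeonholing over the finitely many partitions does not suffice, and your ``diagonal/limit argument'' never explains how the functional data stabilizes simultaneously with the approximate root in $[1,q]$ (respectively, with the coherent residues mod $p^{k}$ needed to produce one invertible element of $\mathbb{Z}_{p}$). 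This uniformity is precisely what the paper's $\mathfrak{c}^{+}$-saturation and indiscernibility buy for free: the fixed $\xi_{1},\ldots,\xi_{n}$ determine, via Remark \ref{Remark:partition}, one functional valid for all colorings at once, and one hypernatural residue whose truncations give the $p$-adic solution. Two further corrections: Brauer's theorem plays no role in the necessity direction (the paper uses it only in the examples verifying that specific tuples \emph{are} Rado functionals), and ``$c$-monochromatic $\phi$'' means the coefficient set $\{t_{1},\ldots,t_{n}\}$ is $c$-monochromatic, not that $\phi(\mathrm{Supp}(P))$ is; the coefficients should come from the digit-lengths or valuations of the solution itself, obtained by applying partition regularity of $P$ to the pulled-back coloring $y\mapsto c(\lfloor \log_{q}y\rfloor)$ or $y\mapsto c(v_{p}(y))$, which your item (i) gestures at but your parenthetical contradicts. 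Without a concrete compactness or saturation argument supplying the uniform functional, the proposal does not yet prove the theorem.
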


The first assertion of Theorem \ref{Theorem:necessary} is proved in Theorem %
\ref{Theorem:maximal} below, while the second assertion of Theorem \ref%
{Theorem:necessary} is an immediate consequence of Proposition \ref%
{Lemma:root} and Theorem \ref{Theorem:minimal}.

We observe that Part (2) of Theorem \ref{Theorem:necessary} recovers, in the
particular case of homogeneous linear polynomial, one direction in the Rado
characterization of partition regular homogeneous linear equations.
Furthermore, Corollary 3.9 of \cite{di_nasso_ramsey_2018} is the particular
instance of Part (2) of Theorem \ref{Theorem:necessary} when $P$ is
homogeneous. In view of Corollary \ref{Corollary:length} below, Theorem 3.10
of \cite{di_nasso_ramsey_2018} is the particular instance of Part (1) of
Theorem \ref{Theorem:necessary} when $P\left( x_{1},\ldots ,x_{n}\right) $
is of the form $P_{1}\left( x_{1}\right) +\cdots +P_{n}\left( x_{n}\right) $
for some monovariate polynomials $P_{1},\ldots ,P_{n}$.

It would be interesting to know whether the Rado conditions provide \emph{%
sufficient }criteria for partition regularity for more general Diophantine
equations, at least for equations $P\left( x_{1},\ldots ,x_{n}\right) =0$
with infinitely many solutions in $\mathbb{N}$ and such that the
corresponding monovariate polynomial $\tilde{P}$ is a homogeneous \emph{%
linear} polynomial. Such a result would be a natural generalization of
Rado's characterization of homogeneous linear equations from \cite%
{rado_studien_1933}.

\begin{conjecture}
\label{Conjecture:crazy}Let $P\in \mathbb{Z}\left[ x_{1},\ldots ,x_{n}\right]
$ be a polynomial such that $\tilde{P}\left( w\right) :=P\left( w,\ldots
,w\right) $ is a nonzero homogeneous linear polynomial. Suppose that the
equation $P\left( x_{1},\ldots ,x_{n}\right) =0$ has infinitely many
solutions in $\mathbb{N}$. Then, the following assertions are equivalent:

\begin{enumerate}
\item $P$ is partition regular;

\item $P$ satisfies the maximal Rado condition and the minimal Rado
condition.
\end{enumerate}
\end{conjecture}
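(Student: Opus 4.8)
The implication $(1)\Rightarrow(2)$ is precisely Theorem~\ref{Theorem:necessary}, so the entire substance of the conjecture lies in the converse $(2)\Rightarrow(1)$: starting only from the maximal and minimal Rado conditions, one must produce, for every finite colouring of $\mathbb{N}$, infinitely many monochromatic solutions of $P(x_1,\ldots,x_n)=0$. The plan is to reduce, via the scaling and translation invariance of partition regularity (Proposition~\ref{Proposition:scale}), to a normalised equation whose solution set in $\mathbb{N}^n$ contains a family that can be \emph{parametrised by polynomial configurations} of the form $\{r+p_1(s),\ldots,r+p_k(s),\ldots\}$ in two auxiliary variables $r,s$, and then to invoke a partition regularity statement for such configurations of the type established in Section~\ref{Section:sufficient}. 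Given such a parametrisation, one pulls back an arbitrary colouring of $\mathbb{N}$ along the map $(r,s)\mapsto$ (the configuration), applies the relevant polynomial partition regularity theorem to obtain a monochromatic configuration, and reads off from it a monochromatic solution of $P=0$; the infinitude of monochromatic solutions follows because the configuration theorem produces infinitely many.

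To make the reduction work one exploits the two Rado conditions to control the low- and high-degree behaviour of $P$. Since $\tilde{P}$ is assumed nonzero homogeneous linear, its only root is $a=0$, so $P^{(a)}=P$ and the minimal Rado condition says that for every prime $p$ there is a minimal Rado set $J$ of $P$ for which $\sum_{\alpha\in J}\frac{1}{\alpha!}\frac{\partial^{\alpha}P}{\partial x^{\alpha}}(\boldsymbol{0})\,w^{|\alpha|}=0$ has an invertible $p$-adic solution; by Lemma~\ref{Lemma:minimal-Rado} one extracts a single minimal Rado set and, when it is homogeneous, a genuine Rado-type linear relation among the corresponding coefficients. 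Dually, the maximal Rado condition pins down the top-degree part of $P$ along suitable monochromatic positive linear functionals. The intended outcome is that, after scaling, the ``lowest'' and ``highest'' layers of $\mathrm{Supp}(P)$ each carry the combinatorial shape of a partition regular linear equation, which is exactly what allows one to solve for one variable as a polynomial in the others together with a free additive parameter $r$ — just as the solutions of $x^{2}-y^{2}+ax+by+cz=0$ with $abc=0$ or $a+b+c=0$ are written as $\{r+p(s),r+q(s),rs+r+ds\}$ in Section~\ref{Section:equation}. (A nonstandard reformulation of the Rado conditions, in the spirit of Remark~\ref{Remark:partition}, can be used to organise this bookkeeping.)

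The decisive obstacle is the final ingredient: a full proof requires a partition regularity theorem for polynomial configurations $\{r+p_1(s),\ldots,r+p_k(s)\}$ together with a ``multiplicative'' coordinate such as $rs+r+ds$, for \emph{arbitrary} integer polynomials $p_i$ vanishing at $0$ and arbitrarily many of them, whereas such statements are presently known only in the restricted situations treated in this paper and in \cite{moreira_monochromatic_2017,bergelson_new_2017}. Establishing this general statement appears to demand substantial new input from ergodic Ramsey theory or topological dynamics — in effect, a common strengthening of the polynomial van der Waerden theorem and of the results on $\{x,x+y,xy\}$-type patterns — and lies beyond the methods available here; this is exactly why we can only verify Conjecture~\ref{Conjecture:crazy} for the explicit families $x^{2}-xy+ax+by+cz=0$ and $x^{2}-y^{2}+ax+by+cz=0$. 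It is worth noting the role of the two standing hypotheses in the conjecture: the assumption that $P=0$ has infinitely many solutions in $\mathbb{N}$ ensures that the parametrising variety is nonempty and positive-dimensional, so that the configuration theorems have something to act on, while the assumption that $\tilde{P}$ is nonzero homogeneous linear makes $0$ the unique relevant translation point and, exactly as in Rado's linear theorem, rules out the appearance of a spurious nonzero constant term that would otherwise decouple the problem from the Rado conditions.
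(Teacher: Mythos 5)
The statement you were asked to prove is labelled a \emph{conjecture} in the paper, and the paper contains no proof of it: only the implication $(1)\Rightarrow(2)$ is established (it follows from Theorem~\ref{Theorem:necessary}, since under the hypothesis $\tilde{P}$ is a nonzero homogeneous linear polynomial and hence trivially splits into linear factors over $\mathbb{Z}$), while the converse $(2)\Rightarrow(1)$ is left open and is only \emph{verified} for the two explicit families $x^{2}-xy+ax+by+cz$ and $x^{2}-y^{2}+ax+by+cz$ with $abc=0$ or $a+b+c=0$ (Theorem~\ref{Theorem:equation}), via ad hoc algebraic parametrisations combined with Theorem~\ref{Theorem:Moreira} and Theorem~\ref{Theorem:PR}. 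Your text correctly identifies this structure and is candid that it does not close the argument, but as a result it is not a proof of the statement: it is a strategy sketch that essentially restates the paper's own motivation for the conjecture.

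The gap is twofold, and both halves are genuine. First, the reduction step is not carried out: you assert that the maximal and minimal Rado conditions should force the ``lowest'' and ``highest'' layers of $\mathrm{Supp}(P)$ to have the shape of partition regular linear equations and thereby yield a parametrisation of (a subfamily of) the solution set by configurations $\{r+p_1(s),\ldots,r+p_k(s),\ldots\}$, but no such general reduction is proved or even precisely formulated; in the paper this step is pure explicit algebra done separately for each of the two quadratic families (e.g.\ the substitution $x=r+s$, $y=r-s$ in Lemma~\ref{Lemma:equation2}), and it is far from clear that it generalises. Second, even granting such a parametrisation, the partition regularity input you would need --- monochromatic configurations combining arbitrarily many polynomial shifts $r+p_i(s)$ with a multiplicative-type coordinate, for arbitrary polynomials vanishing at $0$ --- is only available in the special forms of Theorem~\ref{Theorem:Moreira} and Theorem~\ref{Theorem:PR} (polynomials vanishing at $0$ or at $1$, with the specific third coordinate $rs$ or $rs+r+ds$), and you acknowledge that the general statement is unknown. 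So the proposal contains no error of reasoning, but it does not prove the conjecture, and indeed nothing in the paper does; the statement remains open.
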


We provide evidence towards Conjecture \ref{Conjecture:crazy} by considering
the families of polynomials $x^{2}-xy+ax+by+cz$ and $x^{2}-y^{2}+ax+by+cz$
for $a,b,c,k\in \mathbb{Z}$. We establish that Conjecture \ref%
{Conjecture:crazy} holds for such families, as long as either the product or
the sum of the coefficients is zero.

\begin{theorem}
\label{Theorem:equation}Fix $a,b,c\in \mathbb{Z}$. Assume that either $abc=0$
or $a+b+c=0$. Suppose that $P\left( x,y,z\right) $ is either the polynomial $%
x^{2}-xy+ax+by+cz$, or the polynomial $x^{2}-y^{2}+ax+by+cz$. Then the
following assertions are equivalent:

\begin{enumerate}
\item $P$ is partition regular;

\item $P$ satisfies the minimal Rado condition whenever $\tilde{P}$ is
nonzero.
\end{enumerate}
\end{theorem}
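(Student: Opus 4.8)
The implication $(1)\Rightarrow(2)$ is immediate: in both families $\tilde P(w)=P(w,w,w)=(a+b+c)w$, so when $\tilde P$ is nonzero it is a homogeneous linear polynomial, hence (trivially) a product of linear factors over $\mathbb Z$, and Part (2) of Theorem \ref{Theorem:necessary} applies. For $(2)\Rightarrow(1)$ I would split according to whether $\tilde P$ vanishes. If $a+b+c=0$ then $\tilde P\equiv0$, so (2) is vacuous and one must prove partition regularity of $P$ directly; the plan is to realize the solution set of $P=0$ as containing a polynomial configuration of the form $\{r+p(s),\,r+q(s),\,rs+r+ds\}$ with $p,q\in\mathbb Z[s]$ vanishing at $0$ and $d\in\mathbb Q$, and then quote the partition regularity of such configurations established in Section \ref{Section:sufficient}. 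Concretely, after a translation $P\mapsto P^{(a_0)}$ and a rescaling (Proposition \ref{Proposition:scale}) keeping the coordinates integral, one takes $x=r$, lets $y$ differ from $x$ by a linear form in $s$, and solves for the third variable --- writing $y^2-x^2=(y-x)(y+x)$ first in the $x^2-y^2$ family --- and the identity $a+b+c=0$ is exactly what makes the outcome have the prescribed shape. The handful of degenerate sub-cases where $P$ splits into linear factors (e.g.\ $x^2-xy+ax+by$ or $x^2-y^2+ax+by$ with $a+b=0$, which forces $a+b+c=0$) are disposed of by hand.

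If instead $a+b+c\neq0$, then $\tilde P\neq0$, and by hypothesis $abc=0$, so one of $a,b,c$ vanishes; I would treat the three possibilities separately. Since the unique root of $\tilde P$ is $0$ we have $P^{(0)}=P$ and $\tfrac1{\alpha!}\tfrac{\partial^\alpha P}{\partial x^\alpha}(\boldsymbol 0)=c_\alpha$, so by Lemma \ref{Lemma:minimal-Rado} the minimal Rado condition forces: for infinitely many primes $p$, some minimal Rado set $J$ of $P$ satisfies $\sum_{\alpha\in J}c_\alpha w^{|\alpha|}\equiv0\pmod p$ with $w$ a unit. A short computation lists the admissible blocks $J$ in each sub-case --- the minimal indices of $P$ all lie among $(1,0,0),(0,1,0),(0,0,1),(2,0,0),(0,2,0)$, and a block such as $\{(2,0,0),(0,1,0)\}$ would require an equality $2t_1=t_2$ among the coefficients of a monochromatic $\phi$, impossible for all colorings (e.g.\ for the coloring by the parity of the $2$-adic valuation). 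One finds that, except for the equations $x^2-xy+cz=0$ and $x^2-y^2+cz=0$ (the case $a=b=0$), the solvability condition amounts to requiring every prime to divide one of finitely many fixed nonzero integers --- certain subset sums of $a,b,c$ --- which is impossible. So in all those sub-cases (2) is false, and $P$ is not partition regular either (again by Theorem \ref{Theorem:necessary}), so the equivalence is trivially true.

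It remains to show that the two exceptional equations $x^2-xy+cz=0$ and $x^2-y^2+cz=0$ (with $c\neq0$) are partition regular and satisfy (2). The latter is clear: the block $\{(2,0,0),(1,1,0)\}$, resp.\ $\{(2,0,0),(0,2,0)\}$, is admissible as a minimal Rado block (via $t_1=t_2$) and is homogeneous with coefficient sum $0$, so the minimal Rado equation is the trivial one and has a unit solution for every $p$. For the former, the substitution $y=x+cs$, $z=xs$ exhibits the configuration $\{r,\,r+cs,\,rs\}$ inside the solution set of $x^2-xy+cz=0$, and for the latter a similar parametrization of $(y-x)(y+x)=cz$, after a suitable rescaling (Proposition \ref{Proposition:scale}), reduces it to a configuration of the same type; both configurations are partition regular by Section \ref{Section:sufficient} (extending \cite{moreira_monochromatic_2017}). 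This completes the two implications.

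The main obstacle is the parametrization step, both in the $a+b+c=0$ case and in this last reduction: for each non-degenerate equation one has to pin down the exact affine substitution turning the solution variety of $P=0$ into (a Zariski-dense part of) one of the configurations known to be partition regular, and this is precisely where the hypothesis $abc=0$ or $a+b+c=0$ is used --- it eliminates the monomials that would otherwise force the "base variable'' $r$ to occur in the three coordinates with mismatched coefficients (for instance, the $r^2$-terms produced by $x^2-y^2$ only cancel under such a hypothesis). The enumeration of minimal Rado sets in the negative cases, and the verification that the relevant subset sums of $a,b,c$ do not vanish, is routine but must be carried out case by case.
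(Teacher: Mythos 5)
Your overall architecture is the paper's: $(1)\Rightarrow(2)$ via Theorem \ref{Theorem:necessary}, and $(2)\Rightarrow(1)$ by a case analysis that converts the surviving equations into the polynomial configurations of Section \ref{Section:sufficient}; for the family $x^{2}-xy+ax+by+cz$ your sketch essentially reproduces Lemmas \ref{Lemma:minimal-P} and \ref{Lemma:equation}, and your elimination of the negative sub-cases via Lemma \ref{Lemma:minimal-Rado} and the impossibility of mixed-degree blocks is in substance the paper's argument via Corollary \ref{Corollary:length}. The genuine gap is in the family $x^{2}-y^{2}+ax+by+cz$ with $a=b=0$, $c\neq 0$. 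You classify this case as satisfying the minimal Rado condition, via the block $\{(2,0,0),(0,2,0)\}$ whose associated equation is identically zero, and you therefore owe a proof that $x^{2}-y^{2}+cz=0$ is partition regular for \emph{every} nonzero $c$. Your proposed proof (``a similar parametrization of $(y-x)(y+x)=cz$, after a suitable rescaling'') does not go through: the symmetric substitution $x=r-s$, $y=r+s$ makes the third coordinate $\frac{4}{c}rs$, and Proposition \ref{Proposition:scale} only rescales all variables by a common factor, which replaces $c$ by $cr$; so from the instances actually covered by Theorem \ref{Theorem:Moreira} (the configuration $\{r-s,r+s,rs\}$, i.e.\ $c=\pm 4$, or more generally $c=4k^{2}$ via $x=r\mp ks$, $y=r\pm ks$) one only reaches such special values of $c$, and for instance $x^{2}-y^{2}+z=0$ lands on configurations like $\{r-s,\,r+s,\,4rs\}$ that are not among those established in Section \ref{Section:sufficient}. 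The paper takes the opposite route here: Lemma \ref{Lemma:minimal-P2} asserts that when two of $a,b,c$ vanish and $a+b+c\neq 0$ the minimal Rado condition fails (all relevant minimal Rado sets being homogeneous of degree one), so both (1) and (2) are false, the equivalence is vacuous, and partition regularity of $x^{2}-y^{2}+cz$ is never claimed or needed. Your cancelling degree-two block is exactly the point of divergence; whichever reading of Definition \ref{Definition:minimal-Rado} one adopts (whether an identically vanishing equation counts as solvable), your route requires a partition regularity statement that your sketch does not supply and that the paper's configurations do not yield.

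A secondary problem: in the case $a+b+c=0$, $a+b\neq 0$ of the $x^{2}-y^{2}$ family, the substitution you describe ($x=r$, $y=r+\lambda s$) leaves an $s^{2}$ term and puts the third coordinate in the form $-\frac{2\lambda}{a+b}rs+r+q(s)$ with $q$ quadratic, which is not of the shape $rs+r+ds$ required by Corollary \ref{Corollary:PR2}. The paper instead uses the symmetric substitution $x=r+s$, $y=r-s$ together with the $\frac14$ rescaling from Proposition \ref{Proposition:scale} and the further substitution $r=(a+b)r_{0}$, $s=(a+b)s_{0}$ to handle integrality of $z$, invoking Corollary \ref{Corollary:PR2} with $d=\frac{a-b}{a+b}$. (For the $x^{2}-xy$ family your substitution does produce the right shape, with $d=-b$, so that part is a harmless variant of the paper's translation-by-$b$ argument.) These two points --- above all the exceptional case $a=b=0$ of the second family --- need to be repaired before the proposal constitutes a proof.
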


\begin{remark}
Notice that the maximal Rado condition is always satisfied for a polynomial $%
P$ as in Theorem \ref{Theorem:equation}.
\end{remark}

The only case left out by Theorem \ref{Theorem:equation} is when $a$, $b$, $%
c $, and $a+b+c$ are all nonzero. In this case, $P$ satisfies the minimal
Rado condition if and only if $0\in \left\{ a+b,a+c,b+c\right\} $. Hence,
only the following six cases remain.

\begin{problem}
Fix $a,b\in \mathbb{Z}\setminus \left\{ 0\right\} $. Are the following
polynomials partition regular?

\begin{itemize}
\item $x\left( x-y\right) -xy+a\left( x-y\right) +bz$

\item $x\left( x-y\right) +a\left( y-z\right) +bx$

\item $x\left( x-y\right) +a\left( x-z\right) +by$

\item $\left( x+y\right) \left( x-y\right) +a\left( x-y\right) +bz$

\item $\left( x+y\right) \left( x-y\right) +a\left( y-z\right) +bx$

\item $\left( x+y\right) \left( x-y\right) +a\left( x-z\right) +by$
\end{itemize}
\end{problem}

In order to prove Theorem \ref{Theorem:equation}, we will use the recent
result from \cite{moreira_monochromatic_2017} asserting that the
configuration $\left\{ x+p\left( y\right) ,x+q\left( y\right) ,xy\right\} $
is partition regular whenever $p\left( y\right) ,q\left( y\right) $ are
polynomials with integer coefficients vanishing at $0$ (see Theorem \ref%
{Theorem:Moreira} below). In other words, for every finite coloring of $%
\mathbb{N}$ there exist infinitely many $a,b\in \mathbb{N}$ such that the
set $\left\{ a+p\left( b\right) ,a+q\left( b\right) ,ab\right\} $ is
monochromatic. In fact, we will need a similar result for the configurations
\begin{equation*}
\left\{ x+p\left( y\right) ,x+q\left( y\right) ,xy+x+dy\right\}
\end{equation*}
where $d\in \mathbb{Q}$; see Theorem \ref{Theorem:PR}.

Theorem \ref{Theorem:necessary} will be proved using the formalism of
hypernatural numbers and nonstandard extensions. We will adopt the notation
and terminology from \cite{di_nasso_ramsey_2018}. Particularly, we will
consider a $\mathfrak{c}^{+}$-saturated nonstandard extension ${}^{\ast }%
\mathbb{N}$ of $\mathbb{N}$. Any finite coloring $c:\mathbb{N}\rightarrow
\left\{ 1,2,\ldots ,k \right\} $ extends to a coloring of ${}^{\ast }\mathbb{%
N}$, which we still denote by $c$. If $\xi ,\eta \in {}^{\ast }\mathbb{N}$,
we write $\xi \ll \eta $ and $\eta \gg \xi $ if $\eta -\xi $ is infinite.
Two hypernatural numbers $\xi ,\eta $ are \emph{indiscernible }(or $u$%
-equivalent) if $c\left( \xi \right) =c\left( \eta \right) $ for every
finite coloring $c$ of $\mathbb{N}$; see \cite[Definition 3.1]%
{di_nasso_ramsey_2018}. This is equivalent to the assertion that, for every $%
A\subseteq \mathbb{N}$, $\xi \in {}^{\ast }A$ if and only if $\eta \in
{}^{\ast }A$. To denote that $\xi ,\eta $ are indiscernible, we write $\xi
\sim \eta $. Several properties of such relation are listed in \cite%
{di_nasso_hypernatural_2015,di_nasso_iterated_2015,di_nasso_taste_2015}. In
particular, we have the following:

\begin{itemize}
\item If $\xi ,\eta \in {}^{\ast }\mathbb{N}$ are indiscernible and $\xi
<\eta $, then $\xi \ll \eta $;

\item If $\xi ,\eta \in {}^{\ast }\mathbb{N}$ are indiscernible, and $f:%
\mathbb{N}\rightarrow \mathbb{N}$, then $f\left( \xi \right) ,f\left( \eta
\right) \in {}^{\ast }\mathbb{N}$ are indiscernible;

\item If $\xi \in {}^{\ast }\mathbb{N}$ and $f:\mathbb{N}\rightarrow \mathbb{%
N}$ are such that $\xi \sim f\left( \xi \right) $, then $\xi =f\left( \xi
\right) $;

\item If $\xi \in {}^{\ast }\mathbb{N}$ and $k\in \mathbb{N}$ are such that $%
\xi \sim k$, then $\xi =k$.
\end{itemize}

Fix $\xi \in {}^{\ast }\mathbb{N}$ and $q\in \mathbb{N}$ with $q\geq 2$. By
transfer, $\xi $ admits a unique \emph{base }$q$ \emph{expansion}. This is
an internal sequence $\left( a_{i}\right) _{i\in {}^{\ast }\mathbb{N}_{0}}$
in $\left\{ 0,1,\ldots ,q-1\right\} $ such that%
\begin{equation*}
\mathrm{Supp}_{q}\left( \xi \right) :=\left\{ i\in {}^{\ast }\mathbb{N}%
_{0}:a_{i}\neq 0\right\}
\end{equation*}%
is hyperfinite, and $\sum_{i\in {}^{\ast }\mathbb{N}}a_{i}q^{i}=\xi $. If $%
\sigma $ is the least element of $\mathrm{Supp}_{q}\left( \xi \right) $, and
$\tau $ is the largest element of $\mathrm{Supp}_{q}\left( \xi \right) $,
then we call $\tau $ the position of the first nonzero digit in the base $q$
expansion of $\xi $, and we call $\sigma $ the position of the last nonzero
digit of the base $q$ decomposition of $\xi $. Accordingly, we call $a_{\tau
}$ the first nonzero digit (in the base $q$ expansion of $\xi $), and $%
a_{\sigma }$ the last nonzero digit (in the base $q$ decomposition of $\xi $%
). Similarly, if $i\in \left\{ 1,2,\ldots ,\tau \right\} $, we call $a_{\tau
-i}$ the $i$-th digit, and $a_{\sigma +i}$ the $i$-th to last digit.

It follows from the properties of indiscernible pairs that, when $q\in
\mathbb{N}\setminus \left\{ 1\right\} $ and $\xi ,\xi ^{\prime }\in {}^{\ast
}\mathbb{N}$ are indiscernible, if $\sigma ,\sigma ^{\prime }$ are the
positions of the last nonzero digits of $\xi ,\xi ^{\prime }$, respectively,
then $\sigma ,\sigma ^{\prime }$ are indiscernible. The same applies to the
positions $\tau ,\tau ^{\prime }$ of the first nonzero digits of $\xi ,\xi
^{\prime }$. Furthermore, we have that the last nonzero digits of $\xi ,\xi
^{\prime }$ are equal, and the first nonzero digits of $\xi ,\xi ^{\prime }$
are equal as well. More generally, for every $i\in \mathbb{N}$, the $i$-th
digits of $\xi ,\xi ^{\prime }$ are equal, and the $i$-th to last digits of $%
\xi ,\xi ^{\prime }$ are equal. It follows by overspill that there exists $%
\nu \in {}^{\ast }\mathbb{N}$ infinite such that, for every $i\leq \nu $,
the $i$-th digits of $\xi ,\xi ^{\prime }$ are equal, and the $i$-th to last
digits of $\xi ,\xi ^{\prime }$ are equal. If $\xi \in {}^{\ast }\mathbb{N}$
and $q\in \mathbb{N}\setminus \left\{ 1\right\} $, then we say that $\xi $
has no finite tail in base $q$ if the position of its last nonzero digit in
the base $q$ expansion of $\xi $ is infinite. This is equivalent to the
assertion that $\xi \equiv 0\mathrm{\ \mathrm{mod}}\ q^{n}$ for every $n\in
\mathbb{N}$.

The following characterization of partition regularity is well-known; see
for example \cite[Proposition 3.2]{di_nasso_ramsey_2018}.

\begin{remark}
The equation $P\left( x_{1},\ldots ,x_{n}\right) =0$ is partition regular if
and only if there exist $\xi _{1},\ldots ,\xi _{n}\in {}^{\ast }\mathbb{N}$
which are infinite, indiscernible, and such that $P\left( \xi _{1},\ldots
,\xi _{n}\right) =0$.
\end{remark}

In the same way, one can prove the following characterization of $q$%
-partition regular equations.

\begin{remark}
\label{Remark:p-partitionregular} Fix $q\in \mathbb{N}\setminus \left\{
1\right\} $. The equation $P\left( x_{1},\ldots ,x_{n}\right) =0$ is $q$%
-partition regular if and only if there exist $\xi _{1},\ldots ,\xi _{n}\in
{}^{\ast }\mathbb{N}$ which are infinite, indiscernible, with no finite tail
in base $q$, and such that $P\left( \xi _{1},\ldots ,\xi _{n}\right) =0$.
\end{remark}

One can also characterize upper and lower Rado functionals, as follows.

\begin{remark}
\label{Remark:partition}Let $P\in \mathbb{Z}\left[ x_{1},\ldots ,x_{n}\right]
$ be a polynomial. Then $\left( J_{0},\ldots J_{\ell },d_{0},\ldots
,d_{m-1}\right) $ is an upper Rado functional for $P$ if and only if there
exist infinite, indiscernible $\tau _{1},\ldots ,\tau _{n}\in {}^{\ast }%
\mathbb{N}$ and $M_{0},\ldots ,M_{\ell }\in {}^{\ast }\mathbb{N}$ such that $%
M_{\ell }<M_{\ell -1}<\cdots <M_{0}$,
\begin{equation*}
J_{i}=\left\{ \alpha \in \mathrm{Supp}\left( P\right) :\alpha _{1}\tau
_{1}+\cdots +\alpha _{n}\tau _{n}=M_{i}\right\}
\end{equation*}%
for $i\in \left\{ 0,1,\ldots ,\ell \right\} $,%
\begin{equation*}
d_{i}=M_{i}-M_{m}
\end{equation*}%
for $i\in \left\{ 0,1,\ldots ,m-1\right\} $, and $M_{m}-M_{i}$ is infinite
for $i\in \left\{ m+1,\ldots \ell \right\} $.

Similarly, $\left( J_{0},\ldots J_{\ell },d_{0},\ldots ,d_{m-1}\right) $ is
a lower Rado functional if and only if there exist infinite, indiscernible $%
\sigma _{1},\ldots ,\sigma _{n}\in {}^{\ast }\mathbb{N}$ and $M_{0},\ldots
,M_{\ell }\in {}^{\ast }\mathbb{N}$ such that $M_{0}<M_{1}<\cdots <M_{\ell }$%
,
\begin{equation*}
J_{i}=\left\{ \alpha \in \mathrm{Supp}\left( P\right) :\alpha _{1}\sigma
_{1}+\cdots +\alpha _{n}\sigma _{n}=M_{i}\right\}
\end{equation*}%
for $i\in \left\{ 0,1,\ldots ,\ell \right\} $,%
\begin{equation*}
d_{i}=M_{i}-M_{0}
\end{equation*}%
for $i\in \left\{ 1,\ldots ,m\right\} $, and $M_{i}-M_{m}$ is infinite for $%
i\in \left\{ m+1,\ldots \ell \right\} $.
\end{remark}

\begin{corollary}
\label{Corollary:length}If $\left( J_{0},\ldots ,J_{\ell
},d_{0},d_{1},\ldots ,d_{m}\right) $ is an upper or lower Rado functional, $%
i,j\in \left\{ 0,1,\ldots ,m\right\} $, $\alpha \in J_{i}$ and $\beta \in
J_{j}$ are such that $\ell \left( \alpha +\beta \right) \leq 2$, then $i=j$
and $\left\vert \alpha \right\vert =\left\vert \beta \right\vert $.
\end{corollary}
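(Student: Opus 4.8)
The plan is to pass to the non-standard picture of Remark~\ref{Remark:partition} and reduce the statement to a rigidity property of indiscernible hypernaturals. I will argue for a lower Rado functional; the upper case is identical with $M_{m}$ playing the role of $M_{0}$. By Remark~\ref{Remark:partition}, fix infinite indiscernible $\sigma _{1},\ldots ,\sigma _{n}\in {}^{\ast }\mathbb{N}$ and elements $M_{0}<M_{1}<\cdots <M_{\ell }$ of ${}^{\ast }\mathbb{N}$ such that $J_{r}=\{\gamma \in \mathrm{Supp}(P):\gamma _{1}\sigma _{1}+\cdots +\gamma _{n}\sigma _{n}=M_{r}\}$ for every $r$, and $M_{r}-M_{0}=d_{r}$ for $1\le r\le m$. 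The key elementary observation is that, since $i,j\le m$, each of $M_{i}-M_{0}$ and $M_{j}-M_{0}$ is a standard integer (either $0$ or one of the $d_{r}$), hence $c:=M_{i}-M_{j}$ is a standard, in particular finite, integer. Set $S:=\{k:\alpha _{k}+\beta _{k}>0\}$, so that $|S|=\ell (\alpha +\beta )\le 2$ and both $\alpha $ and $\beta $ are supported in $S$, and write $c_{k}:=\alpha _{k}-\beta _{k}$. Subtracting $\sum_{k}\beta _{k}\sigma _{k}=M_{j}$ from $\sum_{k}\alpha _{k}\sigma _{k}=M_{i}$ gives $\sum_{k\in S}c_{k}\sigma _{k}=c$.

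Next I would clear away the degenerate possibilities. If $|S|\le 1$, or if $|S|=2$ but one of the two coefficients $c_{k}$ with $k\in S$ vanishes, then $\sum_{k\in S}c_{k}\sigma _{k}$ reduces to $c_{k_{0}}\sigma _{k_{0}}$ for a single index $k_{0}$; since $\sigma _{k_{0}}$ is infinite while $c$ is finite, this forces $c_{k_{0}}=0$, so all $c_{k}$ vanish and $\alpha =\beta $. Then $M_{i}=M_{j}$, and as the $M_{r}$ are distinct we get $i=j$ and $|\alpha |=|\beta |$.

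The substance of the proof is the remaining case $S=\{k,k'\}$ with $c_{k}\ne 0\ne c_{k'}$. From $c_{k}\sigma _{k}+c_{k'}\sigma _{k'}=c$ one has $\sigma _{k'}=(c-c_{k}\sigma _{k})/c_{k'}$, so I introduce the map $f:\mathbb{N}\to \mathbb{N}$ sending $x$ to the (necessarily unique) $t\in \mathbb{N}$ with $c_{k'}t=c-c_{k}x$ whenever such a $t$ exists, and to $1$ otherwise. This is a genuine total function on $\mathbb{N}$, and by transfer its non-standard extension satisfies $f(\sigma _{k})=\sigma _{k'}$, because $c_{k'}\sigma _{k'}=c-c_{k}\sigma _{k}$ with $\sigma _{k'}\in {}^{\ast }\mathbb{N}$. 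Since $\sigma _{k}$ and $\sigma _{k'}$ are indiscernible, we obtain $\sigma _{k}\sim f(\sigma _{k})$, and the listed property that $\xi \sim f(\xi)$ implies $\xi =f(\xi)$ yields $\sigma _{k}=f(\sigma _{k})=\sigma _{k'}$. Plugging back in, $c=(c_{k}+c_{k'})\sigma _{k}$; as $\sigma _{k}$ is infinite and $c$ is finite, this forces $c_{k}+c_{k'}=0$ and $c=0$. The first equation is exactly $|\alpha |-|\beta |=\sum_{k\in S}c_{k}=0$, and the second gives $M_{i}=M_{j}$, hence $i=j$.

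I expect the only subtlety to be bookkeeping rather than substance: verifying that $f$ is honestly defined on all of $\mathbb{N}$ and that the divisibility-and-positivity clause defining it transfers correctly (in particular handling the possibly negative sign of $c_{k'}$), and making sure the lower and upper cases really are symmetric. The one genuinely load-bearing ingredient is the rigidity of indiscernibles ($\xi \sim f(\xi)\Rightarrow \xi =f(\xi)$), which is precisely what converts the finiteness of $M_{i}-M_{j}$ into the equality $\sigma _{k}=\sigma _{k'}$, after which both conclusions fall out immediately.
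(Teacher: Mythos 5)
Your proof is correct and follows essentially the same route as the paper's: both pass to the nonstandard characterization of Remark \ref{Remark:partition}, reduce to a two-variable relation $\gamma_{1}\sigma_{k}+\gamma_{2}\sigma_{k'}=M_{i}-M_{j}$ with finite right-hand side, and use indiscernibility together with the rigidity property $\xi\sim f(\xi)\Rightarrow\xi=f(\xi)$ to identify the two witnesses and conclude $\gamma_{1}+\gamma_{2}=0$ and $M_{i}=M_{j}$. Your explicit construction of the auxiliary function $f$ and the separate treatment of the degenerate cases are only presentational differences from the paper's direct substitution of $\tau_{1}$ for $\tau_{2}$.
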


\begin{proof}
We consider the case when $\left( J_{0},\ldots ,J_{\ell },d_{0},d_{1},\ldots
,d_{m}\right) $ is an upper Rado functional, as the other case is analogous.
Suppose that $\tau _{1},\ldots ,\tau _{n}\in {}^{\ast }\mathbb{N}$ and $%
M_{\ell }<\cdots <M_{0}$ are obtained from $\left( J_{0},\ldots ,J_{\ell
},d_{0},d_{1},\ldots ,d_{m}\right) $ as in the previous remark. Without loss
of generality, we can assume that $i\leq j$, $\alpha =\left( \alpha
_{1},\alpha _{2},0,\ldots ,0\right) $ and $\beta =\left( \beta _{1},\beta
_{2},0,\ldots ,0\right) $. Then we have that%
\begin{equation*}
\alpha _{1}\tau _{1}+\alpha _{2}\tau _{2}=M_{i}
\end{equation*}%
and%
\begin{equation*}
\beta _{1}\tau _{1}+\beta _{2}\tau _{2}=M_{j}\text{.}
\end{equation*}%
Observe that $M_{i}-M_{j}$ is finite. Define $\gamma _{1}:=\alpha _{1}-\beta
_{1}$ and $\gamma _{2}:=\alpha _{2}-\beta _{2}$. Thus,%
\begin{equation*}
M_{i}-M_{j}=\gamma _{1}\tau _{1}+\gamma _{2}\tau _{2}
\end{equation*}%
and%
\begin{equation*}
\gamma _{1}\tau _{1}=-\gamma _{2}\tau _{2}+M_{i}-M_{j}\sim -\gamma _{2}\tau
_{1}+M_{i}-M_{j}\text{.}
\end{equation*}%
It follows that%
\begin{equation*}
\gamma _{1}\tau _{1}=-\gamma _{2}\tau _{1}+M_{i}-M_{j}
\end{equation*}%
and hence%
\begin{equation*}
\left( \gamma _{1}+\gamma _{2}\right) \tau _{1}=M_{i}-M_{j}\text{.}
\end{equation*}%
Since $\tau _{1}$ is infinite, we have that $\gamma _{1}=-\gamma _{2}$, $%
M_{i}=M_{j}$, and $i=j$. This concludes the proof.
\end{proof}

\section{Necessity of the Rado conditions\label{Section:necessity}}

In this section, we assume that $P\left( x_{1},\ldots ,x_{n}\right)
=\sum_{\alpha }c_{\alpha }x^{\alpha }\in \mathbb{Z}\left[ x_{1},\ldots ,x_{n}%
\right] $ is a polynomial with integer coefficients.

\begin{theorem}
\label{Theorem:maximal}Suppose that $P$ is partition regular. Then $P$
satisfies the maximal Rado condition.
\end{theorem}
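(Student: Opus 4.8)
The plan is to combine, in one construction, the "base-$q$ last-nonzero-digit" coloring used in Rado's classical necessity argument with the "number-of-digits" coloring alluded to in the discussion after Example~\ref{Example:multiplicativeRado}, working entirely in the nonstandard setting. So suppose $P$ is partition regular; by the Remark characterizing partition regularity, fix infinite, indiscernible $\xi_1,\ldots,\xi_n\in{}^{\ast}\mathbb{N}$ with $P(\xi_1,\ldots,\xi_n)=0$. Fix $q\geq 2$. The goal is to read off, from the base-$q$ expansions of the $\xi_i$, an upper Rado functional $(J_0,\ldots,J_\ell,d_0,\ldots,d_{m-1})$ and a root in $[1,q]$ of the associated polynomial $w\mapsto \sum_{i=0}^m q^{d_i}\sum_{\alpha\in J_i}c_\alpha w^{|\alpha|}$.

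First I would set $\tau_j$ to be the position of the first nonzero digit of $\xi_j$ in base $q$, and $b_j\in\{1,\ldots,q-1\}$ its first nonzero digit, so $\xi_j = b_j q^{\tau_j} + (\text{lower-order terms})$, i.e.\ $\xi_j = b_j q^{\tau_j}(1+\epsilon_j)$ where $\epsilon_j$ is infinitesimally close to $0$ after dividing by $q^{\tau_j}$ — more precisely $\xi_j/q^{\tau_j}$ has standard part in $[b_j, b_j+1)$, hence its $q$-adic behaviour is governed by $b_j$. Since the $\xi_j$ are indiscernible, the $\tau_j$ are indiscernible among themselves and the first digits $b_j$ are literally equal across $j$ — wait, that last point is false in general across different $j$; rather, indiscernibility is a pairwise relation between $\xi_j$ and $\xi_{j'}$, and since all $\xi_j$ are pairwise indiscernible (they form an indiscernible tuple realizing the solution), the first nonzero digits $b_1=\cdots=b_n=:b\in\{1,\ldots,q-1\}$ coincide, and likewise the $\tau_j$ are pairwise indiscernible, so $\tau_j<\tau_{j'}\Rightarrow \tau_j\ll\tau_{j'}$. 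Now for a monomial $x^\alpha$ we have $\xi^\alpha = \prod_j \xi_j^{\alpha_j}$, whose leading base-$q$ behaviour is $b^{|\alpha|} q^{\alpha\cdot\tau}\cdot(1+o(1))$, where $\alpha\cdot\tau := \alpha_1\tau_1+\cdots+\alpha_n\tau_n$. Define the linear form $\phi(\alpha)=\alpha\cdot\tau$ on $\mathrm{Supp}(P)$; let $M_\ell<M_{\ell-1}<\cdots<M_0$ be the distinct values of $\phi$ on $\mathrm{Supp}(P)$ in increasing order, and $J_i=\{\alpha:\phi(\alpha)=M_i\}$. By indiscernibility, $\{\tau_1,\ldots,\tau_n\}$ is $c$-monochromatic for every finite coloring $c$, so by Remark~\ref{Remark:partition} this data genuinely yields an upper Rado functional once we fix the finite part: let $m$ be largest such that $M_0-M_m$ is finite, set $d_i=M_i-M_m$ for $i<m$ (these are genuine natural numbers), and note $M_m-M_{m+1}$ is infinite.

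The key computation is then to extract $P(\xi)=0$ modulo the appropriate power of $q$. Since $P(\xi)=0$, look at the base-$q$ expansion from the top: the top-order contribution comes from all $\alpha$ with $\phi(\alpha)$ maximal (i.e.\ $=M_0$), but because several $M_i$ differ from $M_0$ only by the finite amounts $d_i$, the terms in $J_0,J_1,\ldots,J_m$ all live within a bounded window of digit-positions near the top, namely positions between $M_m$ and $M_0 = M_m + d_0$, each scaled by $b^{|\alpha|}q^{M_i}=b^{|\alpha|}q^{d_i}q^{M_m}$, while the terms in $J_{m+1},\ldots,J_\ell$ are smaller by an infinite power of $q$. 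Dividing $P(\xi)$ by $q^{M_m}$ and taking the part of bounded $q$-adic size, the vanishing of $P(\xi)$ forces
\[
\sum_{i=0}^{m} q^{d_i}\sum_{\alpha\in J_i} c_\alpha b^{|\alpha|} \equiv 0,
\]
at least up to an infinitesimal relative error coming from the lower-order digits of each $\xi_j$; since the left side is a fixed rational-times-power expression and $b\in\{1,\ldots,q-1\}\subseteq[1,q]$ is standard, one concludes it is exactly $0$. That is exactly the statement that $w=b$ is a root in $[1,q]$ of $w\mapsto\sum_{i=0}^m q^{d_i}\sum_{\alpha\in J_i}c_\alpha w^{|\alpha|}$ (with the convention $d_m=0$, matching $M_m$ as the base level), so $P$ satisfies the maximal Rado condition.

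The main obstacle I anticipate is making the "top-digit" extraction fully rigorous: the quantities $\xi_j/q^{\tau_j}$ are hyperrationals whose standard parts lie in $[b, b+1)$ but are not exactly $b$, so $\xi^\alpha/q^{\phi(\alpha)}$ is only approximately $b^{|\alpha|}$, and one must control the accumulated error across the (standardly many, but with possibly large exponents $|\alpha|$) monomials and across the infinitely many lower-order digits, to be sure that the bounded-size part of $q^{-M_m}P(\xi)$ is genuinely $\sum_{i\le m}q^{d_i}\sum_{\alpha\in J_i}c_\alpha b^{|\alpha|}$ and not something perturbed. The clean way around this is to work modulo $q^{M_m + N}$ for an infinite $N$ below $M_m - M_{m+1}$: I would show that replacing each $\xi_j$ by its truncation $b q^{\tau_j}$ changes $P(\xi)$ by a multiple of $q^{M_m+N'}$ for some infinite $N'$ (using that $\xi_j - bq^{\tau_j}$ is divisible by $q^{\tau_j+1}$ but this alone is not enough — one needs the indiscernibility-overspill fact that the first $\nu$ digits agree for some infinite $\nu$, applied to compare $\xi_j$ with a suitable standard-digit-pattern truncation whose power-products are exactly computable), and conclude the displayed congruence holds modulo $q$ raised to an infinite power, hence as an equality of standard rationals. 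I would present the truncation lemma as the technical heart and keep the rest as bookkeeping.
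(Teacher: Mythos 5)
Your overall skeleton is the paper's: take an infinite indiscernible solution $\xi_1,\ldots,\xi_n$, let $\tau_j$ be the position of the first nonzero base-$q$ digit of $\xi_j$, partition $\mathrm{Supp}(P)$ by the values of $\alpha\cdot\tau$, and use Remark \ref{Remark:partition} to see that this yields an upper Rado functional with the finite gaps $d_i$. The problem is the step where you extract the root. You claim that the root is the common first nonzero digit $b\in\{1,\ldots,q-1\}$, i.e.\ that $P(\xi)=0$ forces $\sum_{i=0}^{m}q^{d_i}\sum_{\alpha\in J_i}c_\alpha b^{|\alpha|}=0$. This does not follow, because the digits of $\xi_j$ \emph{below} the leading one are not negligible at the relevant scale: $\xi_j-bq^{\tau_j}$ can be as large as (almost) $q^{\tau_j}$, so replacing $\xi_j$ by $bq^{\tau_j}$ perturbs each monomial $\xi^\alpha$ by an amount comparable to $q^{\alpha\cdot\tau}$ itself, i.e.\ at the same power-of-$q$ level $q^{M_i}$ as the terms you are keeping, not below $q^{M_m+N'}$ for an infinite $N'$. (Your parenthetical claim that $\xi_j-bq^{\tau_j}$ is divisible by $q^{\tau_j+1}$ is also false: that difference is supported on digit positions strictly below $\tau_j$.) So the proposed truncation lemma, which you correctly identify as the technical heart, is not true, and the final congruence is not justified; the statement you would be proving (an integer root $b$) is in any case stronger than the maximal Rado condition, which only asks for a real root in $[1,q]$.

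The correct quantity, and what the paper uses, is the standard part of the common \emph{mantissa}, not the leading digit. By indiscernibility and overspill there is an infinite $\nu$ such that the first $\nu$ digits of all the $\xi_j$ agree, so one can write $\xi_j=q^{\tau_j}\bigl(\sigma+q^{-\nu}\rho_j\bigr)$ with a single hyperrational $\sigma\in[1,q)$ and $0\le\rho_j\le 1$; then $\xi^\alpha=q^{\alpha\cdot\tau}\bigl(\sigma^{|\alpha|}+\text{infinitesimal}\bigr)$, and dividing $P(\xi)=0$ by $q^{M_m}$ and discarding the $J_{m+1},\ldots,J_\ell$ terms (which really are smaller by an infinite power of $q$) gives $\sum_{i=0}^{m}q^{d_i}\sum_{\alpha\in J_i}c_\alpha\sigma^{|\alpha|}\thickapprox 0$; taking standard parts yields the root $w=\mathrm{st}(\sigma)\in[1,q]$, which in general is neither $b$ nor an integer. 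If you replace your ``truncate to the first digit'' step by this ``truncate to the first $\nu$ digits'' step and take $w=\mathrm{st}(\sigma)$ rather than $w=b$, your argument becomes the paper's proof.
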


\begin{proof}
For $\xi ,\eta \in {}^{\ast }\mathbb{R}$ finite, we write $\xi \thickapprox
\eta $ if $\xi -\eta $ is infinitesimal. We also let $\mathrm{st}\left( \xi
\right) $ be the standard part of $\xi $. Fix $q\in \mathbb{N}\setminus
\left\{ 1\right\} $. Set $c_{\alpha }:=\frac{1}{\alpha !}\frac{\partial
^{\alpha }P}{\partial x^{\alpha }}\left( \boldsymbol{0}\right) $ for every
index $\alpha $. By assumption there exist $\xi _{1},\ldots ,\xi _{n}\in
{}^{\ast }\mathbb{N}$ such that $\xi _{1},\ldots ,\xi _{n}$ are
indiscernible, infinite, and $P\left( \xi _{1},\ldots ,\xi _{n}\right) =0$.
For $i\in \left\{ 1,2,\ldots ,n\right\} $, let $\tau _{i}$ be the position
of the first nonzero digit in the base $q$ expansion of $\xi _{i}$. Observe
that $\tau _{1},\ldots ,\tau _{n}$ are infinite and indiscernible. Set $\tau
:=\left( \tau _{1},\ldots ,\tau _{n}\right) $. For an index $\alpha $, we
define $\alpha \cdot \tau $ to be $\alpha _{1}\tau _{1}+\cdots +\alpha
_{n}\tau _{n}$. Let $\left( M_{\ell },\ldots ,M_{0}\right) $ be the
increasing enumeration of $\left\{ \alpha \cdot \tau :\alpha \in \mathrm{%
\mathrm{Supp}}\left( P\right) \right\} $. Define $m$ to be the least element
of the set $\left\{ t\in \left\{ 0,1,\ldots ,\ell \right\} :M_{t+1}\ll
M_{0}\right\} $. For $t\in \left\{ 0,1,\ldots ,\ell \right\} $, define%
\begin{equation*}
J_{t}=\left\{ \alpha \in \mathrm{Supp}\left( P\right) :\alpha \cdot \tau
=M_{t}\right\} \text{.}
\end{equation*}%
For $i\in \left\{ 0,1,\ldots ,m\right\} $, define $d_{i}:=M_{i}-M_{m}$.
Observe that $\left( J_{0},\ldots ,J_{\ell },d_{0},\ldots ,d_{m}\right) $ is
an upper Rado functional for $P$, as witnessed by $\left( \tau _{1},\ldots
,\tau _{n}\right) $; see Remark \ref{Remark:partition}.

Fix now an infinite $\nu \in {}^{\ast }\mathbb{N}$ such that, for $i\leq \nu
$, the $i$-th digits of $\xi _{1},\ldots ,\xi _{n}$ in the base $q$
expansion are equal. We can write%
\begin{equation*}
\xi _{i}=q^{\tau _{i}}\left( \sigma +q^{-\nu }\rho _{i}\right)
\end{equation*}%
where $1\leq \sigma <q$ and $\rho _{1},\ldots ,\rho _{n}\in {}^{\ast }%
\mathbb{R}$ are such that $0\leq \rho _{i}\leq 1$ for every $i\in \left\{
1,2,\ldots ,n\right\} $. Define $\zeta _{i}:=\sigma +q^{-\nu }\rho _{i}$ for
$i\in \left\{ 1,2,\ldots ,n\right\} $ and, for $t\in \left\{ 0,1,\ldots
,\ell \right\} $,%
\begin{equation*}
Q_{t}\left( x\right) :=\sum_{\alpha \in J_{t}}c_{\alpha }x^{\alpha }\text{.}
\end{equation*}%
Set also $w=\mathrm{st}\left( \sigma \right) \in \mathbb{R}$ and observe
that $1\leq w\leq q$. Then we have that%
\begin{equation*}
0=P\left( \xi \right) =\sum_{t=0}^{\ell }Q_{t}\left( \xi \right)
=\sum_{t=0}^{\ell }q^{M_{t}}Q_{t}\left( \zeta _{1},\ldots ,\zeta _{n}\right)
\text{.}
\end{equation*}%
Thus%
\begin{equation*}
\sum_{t=0}^{m}q^{d_{t}}Q_{t}\left( \zeta _{1},\ldots ,\zeta _{n}\right)
=-\sum_{t=m+1}^{\ell }\frac{1}{q^{M_{m}-M_{t}}}Q_{t}\left( \zeta _{1},\ldots
,\zeta _{n}\right) \thickapprox 0
\end{equation*}%
is infinitesimal. Set $\rho :=\left( \rho _{1},\ldots ,\rho _{n}\right) $.
For $t\in \left\{ \ell -m,...,\ell \right\} $, we have%
\begin{eqnarray*}
Q_{t}\left( \zeta _{1},\ldots ,\zeta _{n}\right) &=&Q_{t}\left( \sigma
+q^{-\nu }\rho _{1},\ldots ,\sigma +q^{-\nu }\rho _{n}\right) \\
&=&\sum_{\alpha \in J_{t}}\sum_{\beta \leq \alpha }c_{\alpha }\sigma
^{\left\vert \alpha \right\vert -\left\vert \beta \right\vert }q^{-\nu
\left\vert \beta \right\vert }\rho ^{\beta } \\
&=&\sum_{\alpha \in J_{t}}c_{\alpha }\sigma ^{\left\vert \alpha \right\vert
}+\sum_{\alpha \in J_{t}}\sum_{\substack{ \beta \leq \alpha  \\ \beta \neq
\boldsymbol{0}}}c_{\alpha }\sigma ^{\left\vert \alpha \right\vert
-\left\vert \beta \right\vert }q^{-\nu \left\vert \beta \right\vert }\rho
^{\beta }\text{.}
\end{eqnarray*}%
Observe that%
\begin{equation*}
\sum_{\alpha \in J_{t}}\sum_{\substack{ \beta \leq \alpha  \\ \beta \neq
\boldsymbol{0}}}c_{\alpha }\sigma ^{\left\vert \alpha \right\vert
-\left\vert \beta \right\vert }q^{-\nu \left\vert \beta \right\vert }\rho
^{\beta }\thickapprox 0
\end{equation*}%
is infinitesimal. Therefore, we have that%
\begin{equation*}
Q_{t}\left( \zeta _{1},\ldots ,\zeta _{n}\right) \thickapprox \sum_{\alpha
\in J_{t}}c_{\alpha }\sigma ^{\left\vert \alpha \right\vert }\text{.}
\end{equation*}%
Thus%
\begin{equation*}
0\thickapprox \sum_{t=0}^{m}q^{d_{t}}Q_{t}\left( \zeta _{1},\ldots ,\zeta
_{n}\right) \thickapprox \sum_{t=0}^{m}q^{d_{t}}\sum_{\alpha \in
J_{t}}c_{\alpha }\sigma ^{\left\vert \alpha \right\vert }\text{.}
\end{equation*}%
Considering the standard part, we have%
\begin{equation*}
0=\sum_{t=0}^{m}q^{d_{t}}\sum_{\alpha \in J_{t}}c_{\alpha }w^{\left\vert
\alpha \right\vert }\text{.}
\end{equation*}%
Since $w\in \left[ 1,q\right] $, this concludes the proof.
\end{proof}

\begin{proposition}
\label{Lemma:root}Suppose that the monovariate polynomial $\tilde{P}$
associated with $P$ is nonzero and splits over $\mathbb{Z}$ as a product of
linear factors. If $P$ is partition regular, then for every prime $p\in
\mathbb{N}$ there exists a root $a$ of $\tilde{P}$ such that $P^{\left(
a\right) }$ is $p$-partition regular.
\end{proposition}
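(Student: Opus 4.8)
The plan is to feed a partition-regular solution into the nonstandard characterization of partition regularity and then run a $p$-adic coherence argument to pin down the right root of $\tilde{P}$. Fix a prime $p$. Since $P$ is partition regular, there are infinite, indiscernible $\xi_{1},\ldots,\xi_{n}\in{}^{\ast}\mathbb{N}$ with $P(\xi_{1},\ldots,\xi_{n})=0$. The first step is to note that for each $k\in\mathbb{N}$ the reduction map $x\mapsto x\bmod p^{k}$ is a finite coloring of $\mathbb{N}$, so indiscernibility forces $\xi_{i}\equiv\xi_{j}\pmod{p^{k}}$ for all $i,j$; let $r_{k}\in\{0,1,\ldots,p^{k}-1\}$ be the common value of $\xi_{1}\bmod p^{k}$, a standard integer. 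Since $P$ has integer coefficients it respects congruences, so reducing $P(\xi_{1},\ldots,\xi_{n})=0$ modulo $p^{k}$ (using $\xi_{i}\equiv r_{k}\pmod{p^{k}}$ for every $i$) gives $\tilde{P}(r_{k})\equiv 0\pmod{p^{k}}$. The residues are coherent, $r_{k+1}\equiv r_{k}\pmod{p^{k}}$, hence they assemble into an element $\rho\in\mathbb{Z}_{p}$ with $\rho\equiv r_{k}\pmod{p^{k}}$ for every $k$, and $\tilde{P}(\rho)=0$ in $\mathbb{Z}_{p}$.

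The second step uses the hypothesis on $\tilde{P}$. Write $\tilde{P}(w)=c\prod_{j=1}^{m}(w-a_{j})$ with $c\in\mathbb{Z}\setminus\{0\}$ and $a_{1},\ldots,a_{m}\in\mathbb{Z}$; here $m\geq 1$, since if $\tilde{P}$ were a nonzero constant the equation $\tilde{P}(\rho)=0$ would already be contradictory. As $\mathbb{Z}_{p}$ is an integral domain, $\tilde{P}(\rho)=0$ forces $\rho=a_{j}$ for some $j$. Set $a:=a_{j}$, an integer root of $\tilde{P}$. Then $\xi_{i}\equiv r_{k}\equiv\rho=a\pmod{p^{k}}$ for every $i$ and every $k\in\mathbb{N}$, that is, $p^{k}\mid\xi_{i}-a$ for all $k$.

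The third step packages this into a $p$-partition regular solution of $P^{(a)}$. Put $\eta_{i}:=\xi_{i}-a$. Since $a$ is a fixed integer and each $\xi_{i}$ is infinite, $\eta_{i}\in{}^{\ast}\mathbb{N}$ and $\eta_{i}$ is infinite; applying the standard function $x\mapsto\max(x-a,1)$, which agrees with $x\mapsto x-a$ on infinite hypernaturals, to the indiscernible pairs $\xi_{i},\xi_{j}$ shows that $\eta_{1},\ldots,\eta_{n}$ are indiscernible; and $p^{k}\mid\eta_{i}$ for all $k$ means each $\eta_{i}$ has no finite tail in base $p$. Finally $P^{(a)}(\eta_{1},\ldots,\eta_{n})=P(\eta_{1}+a,\ldots,\eta_{n}+a)=P(\xi_{1},\ldots,\xi_{n})=0$. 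By the nonstandard characterization of $p$-partition regularity (Remark \ref{Remark:p-partitionregular}), $P^{(a)}$ is $p$-partition regular, which is what we wanted.

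The step I expect to be the crux is the second one: passing from ``the $\xi_{i}$ agree modulo every finite power of $p$'' to ``there is an \emph{integer} root $a$ of $\tilde{P}$ with $\xi_{i}\equiv a\pmod{p^{k}}$ for all $k$''. This is precisely where the assumption that $\tilde{P}$ splits over $\mathbb{Z}$ into linear factors is indispensable: in general the limiting residue $\rho$ is only a $p$-adic integer, and without the splitting hypothesis it need not be rational, so no integer $a$ would be available. The remaining steps are routine manipulations with the elementary properties of indiscernible hypernaturals recalled above, together with the nonstandard characterizations of ordinary and of $p$-partition regularity.
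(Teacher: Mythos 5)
Your proof is correct, and its overall skeleton coincides with the paper's: take infinite indiscernible $\xi_1,\ldots,\xi_n$ with $P(\xi_1,\ldots,\xi_n)=0$, show they are all congruent to a single integer root $a$ of $\tilde P$ modulo every power of $p$, shift by $a$, and invoke the nonstandard characterization of $p$-partition regularity (Remark \ref{Remark:p-partitionregular}). The only real divergence is in how the root $a$ is pinned down. The paper stays at the nonstandard level: by overspill it fixes a single infinite $\nu$ with $\xi_1\equiv\cdots\equiv\xi_n\equiv\rho \bmod p^{\ell\nu}$ for a hyperinteger residue $\rho$, deduces $p^{\ell\nu}\mid(\rho-a_1)\cdots(\rho-a_\ell)$, and pigeonholes among the finitely many factors to get one root $a$ with $p^{\nu}\mid\rho-a$. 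You instead work only with standard moduli $p^{k}$, assemble the coherent residues $r_k$ into a genuine $p$-adic integer $\rho\in\mathbb{Z}_p$ with $\tilde P(\rho)=0$, and use that $\mathbb{Z}_p$ is an integral domain to conclude $\rho$ equals one of the integer roots. Your variant buys two small things: it avoids the overspill step and the pigeonhole at an infinite exponent, and it treats the leading coefficient of $\tilde P$ cleanly (the paper's phrase ``$p^{\ell\nu}$ divides $(\rho-a_1)\cdots(\rho-a_\ell)$'' silently drops the constant factor, harmlessly but sloppily). The paper's version buys brevity and produces the single infinite exponent $\nu$ directly, which is the idiom used elsewhere in the paper. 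Your remaining steps (indiscernibility of the $\eta_i=\xi_i-a$ via a standard function agreeing with $x\mapsto x-a$ on infinite elements, the no-finite-tail condition, and $P^{(a)}(\eta_1,\ldots,\eta_n)=0$) match the paper's and are justified correctly.
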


\begin{proof}
Define $\ell $ to be the degree of $\tilde{P}$, and suppose that $%
a_{1},\ldots ,a_{\ell }\in \mathbb{Z}$ are the roots of $\tilde{P}$. Suppose
that $P$ is partition regular. Then there exist infinite, indiscernible $\xi
_{1},\ldots ,\xi _{n}\in {}^{\ast }\mathbb{N}$ such that $P\left( \xi
_{1},\ldots ,\xi _{n}\right) =0$. Fix an infinite $\nu \in {}^{\ast }\mathbb{%
N}$ and $\rho \in \left\{ 0,1,\ldots ,p^{\ell \nu }-1\right\} $ such that
\begin{equation*}
\xi _{1}\equiv \cdots \equiv \xi _{n}\equiv \rho \mathrm{\ \mathrm{mod}}\
p^{\ell \nu }\text{,}
\end{equation*}%
where $\left\{ 0,1,\ldots ,p^{\ell \nu }-1\right\} $ denotes the set of $%
x\in {}^{\ast }\mathbb{N}_{0}$ such that $x<p^{\ell \nu }$. Thus, we have
that $\tilde{P}\left( \rho \right) \equiv 0\mathrm{\ \mathrm{mod}}\ p^{\ell
\nu }$, which implies that $p^{\ell \nu }$ divides $\left( \rho
-a_{1}\right) \cdots \left( \rho -a_{\ell }\right) $. Hence, there exists a
root $a$ of $\tilde{P}$ such that $p^{\nu }$ divides $\rho -a$. This implies
that
\begin{equation*}
\xi _{1}-a\equiv \cdots \equiv \xi _{n}-a\equiv \rho -a\equiv 0\mathrm{\
\mathrm{mod}}\ p^{\nu }\text{.}
\end{equation*}%
Define now $\eta _{i}:=\xi _{i}-a$ for $i\in \left\{ 1,2,\ldots ,n\right\} $%
. Observe that $\eta _{1},\ldots ,\eta _{n}$ are indiscernible, infinite,
with no finite tail in base $p$, and such that $P^{\left( a\right) }\left(
\eta _{1},\ldots ,\eta _{n}\right) =0$. The conclusion thus follows from
Remark \ref{Remark:p-partitionregular}.
\end{proof}

\begin{theorem}
\label{Theorem:minimal}Suppose that $p\in \mathbb{N}$ is a prime. If $P$ is $%
p$-partition regular, then there exists a minimal Rado functional $\left(
J_{0},\ldots ,J_{\ell },d_{1},\ldots ,d_{m}\right) $ for $P$ such that,
setting $d_{0}=0$, the equation
\begin{equation*}
\sum_{i=0}^{m}p^{d_{i}}\sum_{\alpha \in J_{i}}\frac{1}{\alpha !}\frac{%
\partial ^{\alpha }P}{\partial x^{\alpha }}\left( \boldsymbol{0}\right)
w^{\left\vert \alpha \right\vert }= 0
\end{equation*}%
has an invertible solution in the ring $\mathbb{Z}_{p}$ of $p$-adic integers.
\end{theorem}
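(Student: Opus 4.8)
The plan is to mirror the structure of the proof of Theorem \ref{Theorem:maximal}, but working ``from the bottom'' of the base-$p$ expansions rather than the top, and using the hypothesis of $p$-partition regularity (rather than plain partition regularity) to guarantee that the relevant hypernatural witnesses have no finite tail in base $p$. By Remark \ref{Remark:p-partitionregular}, since $P$ is $p$-partition regular there exist $\xi_1,\ldots,\xi_n\in{}^{\ast}\mathbb{N}$ that are infinite, indiscernible, with no finite tail in base $p$, and satisfy $P(\xi_1,\ldots,\xi_n)=0$. Set $c_\alpha:=\frac{1}{\alpha!}\frac{\partial^\alpha P}{\partial x^\alpha}(\boldsymbol 0)$, so $P(x)=\sum_\alpha c_\alpha x^\alpha$. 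For each $i$, let $\sigma_i$ be the position of the \emph{last} nonzero digit in the base-$p$ expansion of $\xi_i$; these are infinite (no finite tail) and indiscernible. Put $\sigma:=(\sigma_1,\ldots,\sigma_n)$, and for an index $\alpha$ write $\alpha\cdot\sigma:=\alpha_1\sigma_1+\cdots+\alpha_n\sigma_n$. Let $(M_0,\ldots,M_\ell)$ be the increasing enumeration of $\{\alpha\cdot\sigma:\alpha\in\mathrm{Supp}(P)\}$, define $J_t=\{\alpha\in\mathrm{Supp}(P):\alpha\cdot\sigma=M_t\}$, and let $m$ be the largest $t\in\{0,1,\ldots,\ell\}$ such that $M_t-M_0$ is finite. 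Setting $d_i:=M_i-M_0$ for $i\in\{1,\ldots,m\}$, Remark \ref{Remark:partition} shows that $(J_0,\ldots,J_\ell,d_1,\ldots,d_m)$ is a lower Rado functional for $P$, witnessed by $\sigma$.

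Next I extract the $p$-adic solution. Fix an infinite $\nu\in{}^{\ast}\mathbb{N}$, small enough that $M_m-M_0<\nu$ and such that the $i$-th-to-last digits of $\xi_1,\ldots,\xi_n$ in base $p$ agree for all $i\le\nu$; such a $\nu$ exists by overspill, as recorded in the discussion of indiscernible pairs. Then we may write
\begin{equation*}
\xi_i=p^{\sigma_i}\bigl(\zeta+p^{\mu}\rho_i\bigr),
\end{equation*}
where $\mu$ is a suitable infinite hypernatural (roughly $\nu-\max_i\sigma_i$ shifted appropriately), $\zeta\in{}^{\ast}\mathbb{N}$ is the common ``low part'' of the digit expansions with $\zeta$ not divisible by $p$, and the $\rho_i\in{}^{\ast}\mathbb{N}_0$. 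The point of the no-finite-tail hypothesis is exactly that the bottom digit is nonzero, so $\zeta$ is a $p$-adic unit. Expanding $0=P(\xi)=\sum_{t=0}^{\ell}p^{M_t}Q_t(\xi)$ with $Q_t(x)=\sum_{\alpha\in J_t}c_\alpha x^\alpha$, dividing by $p^{M_0}$, and reducing modulo $p^{\mu}$ (which kills every term with $t>m$, since $M_t-M_0\ge\nu>\mu$ for those $t$, and absorbs all the $p^{\mu}\rho_i$ corrections), we get a congruence of the form
\begin{equation*}
\sum_{i=0}^{m}p^{d_i}\sum_{\alpha\in J_i}c_\alpha\,\zeta^{|\alpha|}\equiv 0 \pmod{p^{\mu}}.
\end{equation*}
Writing $g(w):=\sum_{i=0}^{m}p^{d_i}\sum_{\alpha\in J_i}c_\alpha w^{|\alpha|}\in\mathbb{Z}[w]$ (with $d_0=0$), this says $g(\zeta)\equiv 0\pmod{p^k}$ for every standard $k$ (take any standard $k<\mu$); equivalently, for each standard $k$ the reduction $\bar\zeta\in\mathbb{Z}/p^k\mathbb{Z}$ of $\zeta$ is an invertible root of $g$ mod $p^k$. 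Since this holds for all $k$, the remark recorded before the definitions (invertible solution in $\mathbb{Z}/p^n\mathbb{Z}$ for all $n$ $\iff$ invertible solution in $\mathbb{Z}_p$) yields an invertible solution of $g(w)=0$ in $\mathbb{Z}_p$, which is exactly the assertion.

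The main obstacle I anticipate is purely bookkeeping: getting the decomposition $\xi_i=p^{\sigma_i}(\zeta+p^\mu\rho_i)$ and the choice of the infinite cutoff $\nu$ (hence $\mu$) arranged so that simultaneously (i) $M_m-M_0<\mu$, so that the high-order blocks $t>m$ vanish in the reduction, (ii) all the ``$p^\mu\rho_i$'' perturbations in $Q_t(\zeta+p^\mu\rho_i)$ for $t\le m$ are divisible by $p^\mu$ and hence disappear, and (iii) $\zeta$ is genuinely a $p$-adic unit, i.e.\ its last digit is nonzero, which is where ``no finite tail in base $p$'' plus indiscernibility is used. Once the indices line up, the ring-theoretic conclusion is immediate from the cited characterization of invertible $p$-adic roots. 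Note also that $d_1,\ldots,d_m\ge 1$ automatically since $M_1>M_0$, so $(J_0,\ldots,J_\ell,d_1,\ldots,d_m)$ is a legitimate lower Rado functional, and $J_0$ contains a minimal index as required by the definition (because $\sigma$ is a positive linear map, as $\sigma_i>0$).
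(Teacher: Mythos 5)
Your proposal follows essentially the same route as the paper's own proof: obtain $\xi_1,\ldots,\xi_n$ with no finite tail in base $p$ via Remark \ref{Remark:p-partitionregular}, use the positions $\sigma_i$ of the last nonzero digits to produce the lower Rado functional as in Remark \ref{Remark:partition}, and read off the congruences modulo $p^k$ for the common bottom block of digits, which is a $p$-adic unit precisely because of the no-finite-tail hypothesis. The only correction needed is in the choice of the infinite cutoff: the relevant constraint is the upper bound $M_m+\nu \le M_{m+1}$ (available since $M_{m+1}-M_m$ is infinite), rather than the automatically true ``$M_m-M_0<\nu$'' you state, since it is this bound that makes the blocks with $t>m$ vanish in the reduction --- exactly the paper's condition $M_m+\nu \ll M_{m+1}$.
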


\begin{proof}
Suppose that $P$ is $p$-partition regular. Thus, there exist infinite,
indiscernible $\xi _{1},\ldots ,\xi _{n}\in {}^{\ast }\mathbb{N}$ with no
finite tail in base $p$ such that $P\left( \xi _{1},\ldots ,\xi _{n}\right)
=0$. For $i\in \left\{ 1,2,\ldots ,n\right\} $, let $\sigma _{i}$ be the
position of the last nonzero digit in the base $p$ expansion of $\xi _{i}$.
Observe that $\sigma _{1},\ldots ,\sigma _{n}$ are infinite and
indiscernible. Set $\sigma :=\left( \sigma _{1},\ldots ,\sigma _{n}\right) $%
. For an index $\alpha $, define $\alpha \cdot \sigma $ to be $\alpha
_{1}\sigma _{1}+\cdots +\alpha _{n}\sigma _{n}$. Let $\left( M_{0},\ldots
,M_{\ell }\right) $ be the increasing enumeration of $\left\{ \alpha \cdot
\sigma :\alpha \in \mathrm{\mathrm{Supp}}\left( P\right) \right\} $. Define $%
m$ be the least element of the set $\left\{ i\in \left\{ 0,1,\ldots ,\ell
\right\} :M_{0}\ll M_{i+1}\right\} $. For $t\in \left\{ 0,1,\ldots ,\ell
\right\} $, define%
\begin{equation*}
J_{t}=\left\{ \alpha \in \mathrm{Supp}\left( P\right) :\alpha \cdot \sigma
=M_{t}\right\} \text{.}
\end{equation*}%
For $i\in \left\{ 0,1,\ldots ,m\right\} $, define $d_{i}=M_{i}-M_{0}$.
Observe that $\left( J_{0},\ldots ,J_{\ell },d_{0},\ldots ,d_{m}\right) $ is
a lower Rado functional for $P$, as witnessed by $\left( \sigma _{1},\ldots
,\sigma _{n}\right) $. For $i\in \left\{ 1,2,\ldots ,n\right\} $, let $\zeta
_{i}\in {}^{\ast }\mathbb{N}$ be such that%
\begin{equation*}
\xi _{i}=p^{\sigma _{i}}\zeta _{i}\text{.}
\end{equation*}%
Define $\zeta =\left( \zeta _{1},\ldots ,\zeta _{n}\right) $. Observe that $%
\zeta _{1},\ldots ,\zeta _{n}$ are indiscernible and, for $\alpha \in J_{t}$%
, we have that%
\begin{equation*}
\xi ^{\alpha }=p^{M_{t}}\zeta ^{\alpha }\text{.}
\end{equation*}%
Fix $\nu \in {}^{\ast }\mathbb{N}$ infinite such that $\zeta _{1}\equiv
\cdots \equiv \zeta _{n}\mathrm{\ \mathrm{mod}}\ p^{\nu }$ and $M_{m}+\nu
\ll M_{m+1}$. Let $\rho \in \left\{ 1,\ldots ,p^{\nu }-1\right\} $ be such
that $\zeta _{1}\equiv \rho \mathrm{\ \mathrm{mod}}\ p^{\nu }$, where as
before $\left\{ 1,\ldots ,p^{\nu }-1\right\} $ denotes the set of $x\in
{}^{\ast }\mathbb{N}$ such that $x<p^{\nu }$. Observe that, by definition of $%
\zeta _{1}$ and $\rho $, $p$ does not divide $\rho $. To ease the notation,
let $c_{\alpha }:=\frac{1}{\alpha !}\frac{\partial ^{\alpha }P}{\partial
x^{\alpha }}\left( \boldsymbol{0}\right) $. We have that%
\begin{equation*}
0=\sum_{\alpha }c_{\alpha }\xi ^{\alpha }\equiv
\sum_{i=0}^{m}p^{M_{i}}\sum_{\alpha \in J_{i}}c_{\alpha }\zeta ^{\alpha
}=p^{M_{0}}\sum_{i=0}^{m}p^{M_{i}-M_{0}}\sum_{\alpha \in J_{i}}c_{\alpha
}\zeta ^{\alpha }\mathrm{\ \mathrm{mod}}\ p^{M_{m}+\nu }\text{.}
\end{equation*}%
Hence,%
\begin{equation*}
0\equiv \sum_{i=0}^{m}p^{M_{i}-M_{0}}\sum_{\alpha \in J_{t}}c_{\alpha }\zeta
^{\alpha }\equiv \sum_{i=0}^{m}p^{d_{i}}\sum_{\alpha \in J_{t}}c_{\alpha
}\rho ^{|\alpha |}\mathrm{\ \mathrm{mod}}\ p^{\nu }\text{.}
\end{equation*}%
In particular, for every $k\in \mathbb{N}$,%
\begin{equation*}
\sum_{i=0}^{m}p^{d_{i}}\sum_{\alpha \in J_{i}}c_{\alpha }\rho ^{|\alpha
|}\equiv 0\mathrm{\ \mathrm{mod}}\ p^{k}\text{.}
\end{equation*}%
Since $p$ does not divide $\rho $, this witnesses that the equation $%
\sum_{i=0}^{m}p^{d_{i}}\sum_{\alpha \in J_{i}}c_{\alpha }w^{\left\vert
\alpha \right\vert }\equiv 0\mathrm{\ \mathrm{mod}}\ p^{k}$ has an
invertible solution in $\mathbb{Z}/p^{k}\mathbb{Z}$. Since this holds for
every $k\in \mathbb{N}$, we have that the equation%
\begin{equation*}
\sum_{i=0}^{m}p^{d_{i}}\sum_{\alpha \in J_{i}}c_{\alpha }w^{|\alpha |}=0
\end{equation*}%
has an invertible solution in $\mathbb{Z}_{p}$.
\end{proof}

\begin{example}
Consider the quadratic polynomial%
\begin{equation*}
P\left( x,y\right) =x^{2}+y^{2}-xy-ax-by+ab
\end{equation*}%
with $a,b\in \mathbb{Z}$. We claim that the equation $P\left( x,y\right) =0$
has only finitely many integer solutions when $a=b$, and it does not satisfy
the minimal Rado condition when $a\neq b$. In particular, $P$ is not
partition regular.

Suppose that $a=b$, in which case%
\begin{equation*}
P\left( x,y\right) =x^{2}+y^{2}-xy-ax-ay+a^{2}\text{.}
\end{equation*}%
Set%
\begin{equation*}
r=x-y-a
\end{equation*}%
and observe that%
\begin{eqnarray*}
P\left( x,y\right) &=&x^{2}+y^{2}-xy-ax-ay+a^{2} \\
&=&r^{2}+xy+ax-3ay \\
&=&y^{2}+y\left( r-a\right) +a^{2}+ar+r^{2}\text{.}
\end{eqnarray*}%
If $x,y$ is an integer solution to $P\left( x,y\right) =0$, we have
necessarily that $\left( r-a\right) ^{2}\geq 4\left( a^{2}+ar+r^{2}\right) $
and hence $a+r=0$. Therefore $x=y$ and%
\begin{equation*}
P\left( x,y\right) =\left( x-a\right) ^{2}\text{.}
\end{equation*}%
Thus the only integer solution to $P\left( x,y\right) =0$ is $\left(
a,a\right) $ when $a=b$.

Consider the case when $a\neq b$. In this case we have
\begin{equation*}
\tilde{P}\left( w\right) =\left( w-a\right) \left( w-b\right) \text{.}
\end{equation*}%
Furthermore%
\begin{eqnarray*}
&&P^{\left( a\right) }\left( x,y\right) \\
&=&\left( x+a\right) ^{2}+\left( y+a\right) ^{2}-\left( x+a\right) \left(
y+a\right) -a\left( x+a\right) -b\left( y+a\right) +ab \\
&=&x^{2}+y^{2}-xy+2ax+2ay-ax-ay-ax-by \\
&=&x^{2}+y^{2}-xy+\left( a-b\right) y\text{.}
\end{eqnarray*}%
Similarly, we have that%
\begin{equation*}
P^{\left( b\right) }\left( x,y\right) =x^{2}+y^{2}-xy+\left( b-a\right) x%
\text{.}
\end{equation*}%
As $a\neq b$, we have that $\left( a-b\right) y$ and $x^{2}$ are the only
minimal terms in $P^{\left( a\right) }$. Therefore, all minimal Rado sets
for $P^{\left( a\right) }$ are singletons. Similarly, all minimal Rado sets
for $P^{\left( b\right) }$ are singletons. By Lemma \ref{Lemma:minimal-Rado}%
, this shows that $P$ does not satisfy the minimal Rado condition when $%
a\neq b$.
\end{example}

\begin{example}
Consider the polynomial%
\begin{equation*}
P\left( x,y\right) =x^{2}-y^{2}+xy-ax-by+ab
\end{equation*}%
with $a,b\in \mathbb{Z}$. We claim that $P$ does not satisfy the minimal
Rado condition when $a\neq b$. In particular, $P$ is not partition regular.

We have that%
\begin{equation*}
\tilde{P}\left( w\right) =\left( w-a\right) \left( w-b\right) .
\end{equation*}%
Furthermore,%
\begin{eqnarray*}
&&P^{\left( a\right) }\left( x,y\right) \\
&=&\left( x+a\right) ^{2}-\left( y+a\right) ^{2}+\left( x+a\right) \left(
y+a\right) -a\left( x+a\right) -b\left( y+a\right) +ab \\
&=&x^{2}-y^{2}+xy+2ax-\left( a+b\right) y
\end{eqnarray*}%
and%
\begin{eqnarray*}
&&P^{\left( b\right) }\left( x,y\right) \\
&=&\left( x+b\right) ^{2}-\left( y+b\right) ^{2}+\left( x+b\right) \left(
y+b\right) -a\left( x+b\right) -b\left( y+b\right) +ab \\
&=&x^{2}-y^{2}+xy+(3b-a)x-2by\text{.}
\end{eqnarray*}%
Since either $a,b$ is nonzero, at least one between $2a$ and $a+b$ is
nonzero. Hence the minimal terms $P^{\left( a\right) }\left( x,y\right) $
are contained in the set $\left\{ 2ax,-\left( a+b\right) y\right\} $. Hence,
all the minimal Rado sets for $P^{\left( a\right) }$ are homogeneous.
Similarly, the terms of minimal indices for $P^{\left( b\right) }$ are
contained in $\left\{ \left( 3b-a\right) x,\left( -2b\right) y\right\} $,
and all the minimal Rado sets for $P^{\left( b\right) }$ are homogeneous.
Since we have%
\begin{eqnarray*}
2a-\left( a+b\right) &=&a-b\neq 0 \\
\left( 3b-a\right) -2b &=&b-a\neq 0
\end{eqnarray*}%
we conclude by Lemma \ref{Lemma:minimal-Rado} that $P$ does not satisfy the
minimal Rado condition.
\end{example}

\section{Partition regularity of polynomial configurations\label%
{Section:sufficient}}

In this section we show, building on results from \cite%
{moreira_monochromatic_2017}, that certain polynomial configurations in $%
\mathbb{N}$ are partition regular. Let us define the following notation:

\begin{notation}
\label{Notation:set}Let $\mathbb{Q}_{+}$ be the set of nonnegative rational
numbers. Given $d\in \mathbb{Q}$ and a subset $E\subseteq \mathbb{Q}_{+}$,
we define%
\begin{equation*}
E+d=\{x\in \mathbb{Q}_{+}\mid x-d\in E\}
\end{equation*}%
and%
\begin{equation*}
E/d=\{x\in \mathbb{Q}_{+}\mid \left\vert dx\right\vert \in E\}\text{.}
\end{equation*}
\end{notation}

\begin{definition}
\label{def:ps}Fix $d\in \mathbb{Q}$. A subset $A\subseteq \mathbb{N}+d$ is
\emph{piecewise syndetic} if there exists some fixed $k\in \mathbb{N}$ such
that $A$ contains arbitrarily long increasing sequences $a_{1}<\cdots <a_{n}$
satisfying $a_{i+1}-a_{i}\leq k$, for all $i\in \{1,\ldots ,n-1\}$.
\end{definition}

\begin{definition}
A subset $A$ of $\mathbb{N}$ is:

\begin{itemize}
\item \emph{IP-set }if there exists an infinite increasing sequence $\left(
a_{k}\right) _{k\in \mathbb{N}}$ such that, for every increasing sequence of
indices $k_{1}<k_{2}<\cdots <k_{n}$ in $\mathbb{N}$, $a_{k_{1}}+a_{k_{2}}+%
\cdots +a_{k_{n}}\in A$;

\item an \emph{IP}$^{\ast }$\emph{-set }if, for every IP-set $B$, $A\cap
B\neq \varnothing $.
\end{itemize}
\end{definition}

We will make use of the following well-known property of piecewise syndetic
subsets of $\mathbb{N}$: if $E\subseteq \mathbb{N}$ is piecewise syndetic
and $c$ is a finite coloring of $\mathbb{N}$, then there exists a $c$%
-monochromatic piecewise syndetic subset of $E$ (see, for instance, \cite[%
Theorem 1.24]{Furstenberg81}).

A polynomial generalization of the classical van der Waerden theorem on
arithmetic progressions was established by Bergelson and Leibman \cite%
{bergelson_polynomial_1996}. We will use the following reformulation; see
for example \cite[Theorem 4.5]{bergelson_partition_2001}.

\begin{theorem}[Bergelson and Leibman]
\label{Theorem:vdw} Fix $d\in \mathbb{Q}$. Let $E\subseteq \mathbb{N}+d$ be
piecewise syndetic, and let $F\subseteq \mathbb{Z}[x]$ be a finite set of
integer-valued polynomials such that $f(0)=0$ for every $f\in F$. Then the
set
\begin{equation*}
\left\{ n\in \mathbb{N}:E\cap \bigcap_{f\in F}(E-f(n))\text{ is piecewise
syndetic}\right\}
\end{equation*}%
is IP*.
\end{theorem}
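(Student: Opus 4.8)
The statement is a reformulation of the polynomial van der Waerden theorem of Bergelson and Leibman \cite{bergelson_polynomial_1996}, so the plan is to reduce it to the case $d=0$ by clearing denominators and then invoke the known piecewise syndetic / IP$^{\ast}$ form of that theorem. Write $d=u/q$ with $u\in\mathbb{Z}$ and $q\in\mathbb{N}$, and consider the bijection $\iota\colon\mathbb{N}+d\to q\mathbb{N}+u$ given by $\iota(x)=qx$; after discarding the finitely many nonpositive terms of $q\mathbb{N}+u$ we regard the target as a subset of $\mathbb{N}$. The crucial observation is that $\iota$ multiplies every gap by the factor $q$ and carries arbitrarily long runs to arbitrarily long runs, so that a set $S\subseteq\mathbb{N}+d$ is piecewise syndetic in the sense of Definition~\ref{def:ps} if and only if $\iota(S)$ is a piecewise syndetic subset of $\mathbb{N}$; conversely, any piecewise syndetic subset of $\mathbb{N}$ contained in the progression $q\mathbb{N}+u$ is of this form. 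Since $\iota(S-f(n))=\iota(S)-qf(n)$ and each $qf$ is again an integer-valued polynomial with $(qf)(0)=0$, proving the theorem for $\tilde E:=\iota(E)\subseteq\mathbb{N}$ and $\tilde F:=\{\,qf:f\in F\,\}$ yields exactly the desired conclusion for $E$ and $F$: the set $\tilde E\cap\bigcap_{f\in F}(\tilde E-qf(n))$ is contained in $q\mathbb{N}+u$ and is the $\iota$-image of $E\cap\bigcap_{f\in F}(E-f(n))$, so the two sets are piecewise syndetic for exactly the same values of $n$. Hence we may assume $d=0$.

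With $d=0$ the statement to prove is that, for $E\subseteq\mathbb{N}$ piecewise syndetic and $F$ a finite family of integer-valued polynomials vanishing at $0$, the set
\begin{equation*}
R:=\Bigl\{\,n\in\mathbb{N}: E\cap\bigcap_{f\in F}(E-f(n))\text{ is piecewise syndetic}\,\Bigr\}
\end{equation*}
is IP$^{\ast}$. This is precisely the polynomial van der Waerden theorem in its refined form, and I would quote it from the literature (e.g. \cite[Theorem~4.5]{bergelson_partition_2001}). For completeness, the two standard derivations are: (i) a purely combinatorial one, running the PET (polynomial exhaustion) induction of \cite{bergelson_polynomial_1996}, whose base case and inductive steps are fed by the IP version of the van der Waerden / Hales--Jewett theorem, so that the set of admissible common differences at each stage meets every IP-set, while piecewise syndeticity of the output configuration is retained because one first passes to a monochromatic piecewise syndetic subset of $E$ (using the fact recalled before Theorem~\ref{Theorem:vdw}, see \cite[Theorem~1.24]{Furstenberg81}) and then localizes; and (ii) an ergodic one, where a Furstenberg-type correspondence principle applied to the piecewise syndetic set $E$ produces a minimal topological measure-preserving system and a clopen set of positive measure, the IP polynomial Szemer\'{e}di theorem of Bergelson and Leibman (equivalently, the ergodic IP polynomial Szemer\'{e}di theorem of Bergelson and McCutcheon) gives an IP$^{\ast}$ lower bound on the multiple recurrence, and this transfers back to piecewise syndeticity of $E\cap\bigcap_{f\in F}(E-f(n))$.

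The only genuinely new content above is the scaling reduction, and it is essentially bookkeeping; the one point deserving a moment of care is that the members of $F$ are assumed only to be \emph{integer-valued}, not to have integer coefficients, so one must note that $qf$ is again integer-valued (being a scalar multiple of $f$) and, more to the point, that $qf(n)\in q\mathbb{Z}$ for $n\in\mathbb{N}$, which is what keeps the translated sets $\tilde E-qf(n)$ inside the coset $q\mathbb{Z}+u$ and hence makes the piecewise syndeticity equivalence under $\iota$ valid in both directions. Beyond that there is no obstacle: all of the depth is packaged into the cited polynomial van der Waerden / polynomial Szemer\'{e}di theorem, so the ``main obstacle'' here is simply that one invokes that theorem rather than reproving it.
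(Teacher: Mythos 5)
Your proposal is correct and matches the paper's treatment: the paper offers no proof of Theorem \ref{Theorem:vdw}, simply quoting it as a reformulation of the Bergelson--Leibman polynomial van der Waerden theorem via \cite[Theorem 4.5]{bergelson_partition_2001}, which is exactly what you do after your (correct, routine) scaling reduction from $\mathbb{N}+d$ with $d\in\mathbb{Q}$ to the standard case $d=0$. That reduction is a harmless piece of bookkeeping the paper leaves implicit, so nothing further is needed.
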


By replacing the function $n\mapsto f\left( n\right) $ with the function $%
n\mapsto f\left( n+1\right) $, we immediately obtain the following
consequence.

\begin{corollary}
\label{Corollary:vdw} Fix $d\in \mathbb{Q}$. Let $E\subseteq \mathbb{N}+d$
be piecewise syndetic, and let $F\subseteq \mathbb{Z}[x]$ be a finite set of
integer-valued polynomials such that $f(1)=0$ for every $f\in F$. Then the
set
\begin{equation*}
\left\{ n\in \mathbb{N}:E\cap \bigcap_{f\in F}(E-f(n))\text{ is piecewise
syndetic}\right\} -1
\end{equation*}%
is IP*.
\end{corollary}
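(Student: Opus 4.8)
The plan is to deduce Corollary~\ref{Corollary:vdw} from Theorem~\ref{Theorem:vdw} by a simple change of variables in the polynomials. Given a finite set $F\subseteq\mathbb{Z}[x]$ with $f(1)=0$ for every $f\in F$, I would set $\tilde F:=\{\,x\mapsto f(x+1) : f\in F\,\}$. Since shifting the argument of a polynomial with integer coefficients by $1$ again produces a polynomial with integer coefficients, $\tilde F$ is a finite subset of $\mathbb{Z}[x]$ (in particular, a finite set of integer-valued polynomials), and each $g(x)=f(x+1)$ in $\tilde F$ satisfies $g(0)=f(1)=0$. Thus $\tilde F$ meets the hypotheses of Theorem~\ref{Theorem:vdw}.

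Next I would apply Theorem~\ref{Theorem:vdw} to the same $d$, the same piecewise syndetic set $E\subseteq\mathbb{N}+d$, and the family $\tilde F$. Since for each $m\in\mathbb{N}$ one has $\bigcap_{g\in\tilde F}(E-g(m))=\bigcap_{f\in F}(E-f(m+1))$, this yields that the set
\[
S:=\bigl\{\,m\in\mathbb{N} : E\cap\textstyle\bigcap_{f\in F}(E-f(m+1))\ \text{is piecewise syndetic}\,\bigr\}
\]
is IP*.

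Finally I would identify $S$ with the shifted set appearing in the statement. Writing $T:=\{\,n\in\mathbb{N} : E\cap\bigcap_{f\in F}(E-f(n))\ \text{is piecewise syndetic}\,\}$, a natural number $m$ lies in $S$ precisely when $m+1\in T$; hence, reading $T-1$ as a subset of $\mathbb{N}$ (i.e.\ $T-1=\{n\in\mathbb{N}:n+1\in T\}$, which discards the possibly spurious value $0$), we get $S=T-1$. Since $S$ is IP*, so is $T-1$, which is exactly the conclusion of the corollary.

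I do not anticipate any genuine obstacle: the corollary is a purely formal reparametrization of Theorem~\ref{Theorem:vdw}, and the only points warranting a word of justification — that $f(x+1)\in\mathbb{Z}[x]$ whenever $f\in\mathbb{Z}[x]$, and the convention that "$T-1$" denotes $\{n\in\mathbb{N}:n+1\in T\}$ — are both immediate. If anything, the one place to be slightly careful is to confirm that the substitution $g(x)=f(x+1)$ does not alter the set $E$ or the parameter $d$, so that one is indeed applying Theorem~\ref{Theorem:vdw} to the very same $E\subseteq\mathbb{N}+d$.
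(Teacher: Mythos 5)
Your proposal is correct and follows exactly the paper's own route: the paper derives Corollary \ref{Corollary:vdw} from Theorem \ref{Theorem:vdw} by the same substitution $n\mapsto f(n+1)$, keeping $E$ and $d$ unchanged. Your extra care about the meaning of ``$-1$'' and the fact that $f(x+1)\in\mathbb{Z}[x]$ is fine but not a point of divergence.
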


Using Theorem \ref{Theorem:vdw}, the following partition regularity result
was established in \cite{moreira_monochromatic_2017}.

\begin{theorem}
\label{Theorem:Moreira}Let $c$ be a finite coloring of $\mathbb{N}$. Let $%
s\in \mathbb{N}$ and, for each $i\in \{1,\ldots ,s\}$, let $F_{i}$ be a
finite set of functions $\mathbb{N}^{i}\rightarrow \mathbb{Z}$ such that for
all $f\in F_{i}$ and any $x_{1},\ldots ,x_{i-1}\in \mathbb{N}$, the function
$x\mapsto f(x_{1},\ldots ,x_{i-1},x)$ is a polynomial with integer
coefficients vanishing at $0$. Then, there exist infinitely many $%
x_{0},\ldots ,x_{s}\in \mathbb{N}$ such that the set%
\begin{equation*}
\{x_{0}\cdots x_{j}:0\leq j\leq s\}\cup \{x_{0}\cdots x_{j}+f(x_{j+1},\ldots
,x_{i})\mid 0\leq j<i\leq s,f\in F_{i-j}\}
\end{equation*}%
is monochromatic for $c$.
\end{theorem}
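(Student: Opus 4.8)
The plan is to reconstruct the argument of \cite{moreira_monochromatic_2017}, of which this statement is the combinatorial core. The only external input is the polynomial van der Waerden theorem of Bergelson and Leibman, available here as Theorem~\ref{Theorem:vdw} and Corollary~\ref{Corollary:vdw}: for a piecewise syndetic $E$ and finitely many integer polynomials vanishing at $0$ (resp.\ at $1$), the set of common return times $n$ for which $E\cap\bigcap_f(E-f(n))$ is again piecewise syndetic is $\mathrm{IP}^*$, hence syndetic and infinite. The elaborate form of the conclusion --- the whole tower $F_1,\dots,F_s$ together with the nested product-plus-polynomial shape of the configuration --- is not excess generality but the minimal shape stable under the induction one runs on the number of colors: peeling off one color and applying van der Waerden to it forces, in the remaining colors, a configuration of the same general shape with (boundedly) larger $s$ and larger families $F_i$, so one must carry the general statement through.

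Concretely, I would manufacture the tuple $(x_0,\dots,x_s)$ one coordinate at a time inside a shrinking monochromatic piecewise syndetic set. Suppose $x_0,\dots,x_{m-1}$ have been fixed so that the partial configuration already lies in a monochromatic piecewise syndetic $D$. The elements freshly created by a new coordinate $x_m$ are the dilate $P_{m-1}x_m$, with $P_{m-1}=x_0\cdots x_{m-1}$, together with the numbers $P_j+g_{j,f}(x_m)$ for $0\le j\le m-1$ and $f\in F_{m-j}$, where each $g_{j,f}(t):=f(x_{j+1},\dots,x_{m-1},t)$ is an integer polynomial vanishing at $0$. Theorem~\ref{Theorem:vdw} applied to $D$ with the finite family $\{g_{j,f}\}$ (and, when a shift by $1$ enters from the multiplicative bookkeeping, its variant Corollary~\ref{Corollary:vdw}) produces a syndetic set of admissible $x_m$ keeping all the \emph{additive} new elements inside a piecewise syndetic refinement of $D$; a careful bookkeeping of the quantifiers --- choosing the base points $P_j$ only after the relevant return set is known --- makes this consistent.

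The step I expect to be the real obstacle, and what makes this a hard theorem rather than a routine iteration of van der Waerden, is keeping the multiplicative chain $x_0, x_0x_1, \dots, x_0\cdots x_s$ monochromatic. Dilation destroys piecewise syndeticity --- for instance $\{n: 2n\text{ is odd}\}=\varnothing$ although the odd numbers are piecewise syndetic --- and one cannot simply push the whole configuration into an arbitrary color class: under the odd/even coloring, no monochromatic copy of $\{x_0,x_0x_1,x_0+x_1\}$ lies in the odd numbers, only in the even ones. Consequently the induction cannot be a naive ``restrict to a large monochromatic subset'' recursion; it must keep track of which color classes are multiplicatively hospitable and, at each stage, choose $x_m$ (equivalently the partial product $P_m$) to be a ``good dilation factor'' --- one for which $\{n : P_m n\in D'\}$ remains piecewise syndetic --- \emph{simultaneously} with the additive constraints above. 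Proving that piecewise syndetic sets carry an abundant supply of such good dilation factors, and interleaving this multiplicative control with the additive control along a single induction in which neither form of largeness is ever allowed to collapse, is where essentially all the work lies. The base case ($s=0$, or the one-color coloring) and the upgrade from one solution to infinitely many are then routine.
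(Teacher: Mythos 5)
The paper does not reprove this theorem (it is quoted from \cite{moreira_monochromatic_2017}), but the proof of Theorem \ref{Theorem:PR} in Section \ref{Section:sufficient} is a direct adaptation of Moreira's argument, so it is the right benchmark. Your outline gets the external input right (Bergelson--Leibman, Theorem \ref{Theorem:vdw}) and the general shape of a recursion through monochromatic piecewise syndetic sets, but the step you single out as the crux is both unproved and not the mechanism the argument actually uses. Nowhere does the proof need dilation \emph{preimages} $\{n:P_m n\in D'\}$ of piecewise syndetic sets to be piecewise syndetic, and you give no proof of your claimed ``abundant supply of good dilation factors'' (your own odd-numbers example shows such preimages can be empty, so any lemma of this kind would need real care and is simply deferred in your plan). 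What the argument does instead: at step $n$ it applies polynomial van der Waerden to $B_{n-1}$ with an auxiliary family $G_n(f)$ of polynomials obtained by substituting the previously chosen $y$'s into $f$ and rescaling, obtains a piecewise syndetic set $D_n\subseteq B_{n-1}$ of simultaneous returns, and then uses the \emph{forward} dilate $y_nD_n$, which is automatically piecewise syndetic (gaps are multiplied by $y_n$, hence still bounded); inside it one takes a monochromatic piecewise syndetic $B_n$, whose color $t_n$ is allowed to change at every step. Monochromaticity of the multiplicative chain is not enforced during the recursion at all: it is recovered only at the end by pigeonholing on infinitely many indices $n_0<\cdots<n_s$ with $t_{n_j}=t$, setting $x_j=y_{n_{j-1}+1}\cdots y_{n_j}$, picking $\tilde{x}\in B_{n_s}\subseteq x_s\cdots x_1B_{n_0}$ and dividing, which simultaneously produces $x_0$ and places every partial product $x_0\cdots x_j$ in $B_{n_j}\subseteq C_t$; the additive elements $x_0\cdots x_j+f(x_{j+1},\ldots,x_i)$ then land in $B_{n_j}$ by unwinding the defining intersections of the $D_n$'s one $y$-factor at a time, the hypothesis that $f$ vanishes at $0$ (at $1$ in Theorem \ref{Theorem:PR}) killing the last correction term.

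Because you defer exactly this part (``where essentially all the work lies'') to an unproved lemma that the actual proof neither uses nor needs, the proposal as written has a genuine gap: it is a plan, not a proof. Two smaller inaccuracies point the same way. The argument is not an induction on the number of colors with growing $s$ and growing families $F_i$; it is a single recursion over steps with one pigeonhole over colors at the end. And the ``restrict to a monochromatic piecewise syndetic subset'' recursion that you dismiss as naive is in fact exactly what is done---the twist being the forward dilation and the deferred choice of the color, not control of dilation preimages.
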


The following corollary is an immediate consequence of Theorem \ref%
{Theorem:Moreira} and saturation.

\begin{corollary}
\label{Corollary:Moreira}Fix $s\in \mathbb{N}$. There exist infinite $\xi
_{0},\ldots ,\xi _{s}\in {}^{\ast }\mathbb{N}$ such that, for every $0\leq
j<i\leq s$, and for every function $f:\mathbb{N}^{i-j}\rightarrow \mathbb{Z}$
such that, for every $x_{j+1},\ldots ,x_{i-1}\in \mathbb{N}$, the function $%
x\mapsto f\left( x_{j+1},\ldots ,x_{i-1},x\right) $ is a polynomial with
integer coefficients vanishing at $0 $, one has that
\begin{equation*}
\xi _{0}\sim \xi _{0}\cdots \xi _{i}\sim \xi _{0}\cdots \xi _{j}+f\left( \xi
_{j+1},\ldots ,\xi _{i}\right) \text{.}
\end{equation*}
\end{corollary}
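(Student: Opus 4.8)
The plan is to transfer Theorem \ref{Theorem:Moreira} into the nonstandard universe by a routine saturation argument, in the spirit of the characterization of partition regularity via indiscernible hypernaturals recalled above. The two points requiring a little care are that \emph{all} of $\xi_0,\ldots,\xi_s$ must come out infinite, and that, since the functions $f$ may take negative values, the quantity $\xi_0\cdots\xi_j+f(\xi_{j+1},\ldots,\xi_i)$ is a priori only a hyperinteger.

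First I would reduce to the following: it suffices to produce $\xi_0,\ldots,\xi_s\in{}^{\ast}\mathbb{N}$ such that for every $A\subseteq\mathbb{N}$ (including $A=\mathbb{N}$ itself), all $0\le j<i\le s$, and every admissible $f\colon\mathbb{N}^{i-j}\to\mathbb{Z}$ (i.e.\ a polynomial vanishing at $0$ in its last variable for each fixed choice of the others), one has $\xi_0\in{}^{\ast}A$ iff $\xi_0\cdots\xi_i\in{}^{\ast}A$, and $\xi_0\in{}^{\ast}A$ iff $\xi_0\cdots\xi_j+f(\xi_{j+1},\ldots,\xi_i)\in{}^{\ast}A$. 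Indeed, taking $A=\mathbb{N}$ in the second condition forces $\xi_0\cdots\xi_j+f(\xi_{j+1},\ldots,\xi_i)\in{}^{\ast}\mathbb{N}$, so that $\sim$ applies to it; by the characterization of $\sim$ through membership in the sets ${}^{\ast}A$, the two conditions say exactly $\xi_0\sim\xi_0\cdots\xi_i$ and $\xi_0\sim\xi_0\cdots\xi_j+f(\xi_{j+1},\ldots,\xi_i)$, and transitivity of $\sim$ then yields the full chain $\xi_0\sim\xi_0\cdots\xi_i\sim\xi_0\cdots\xi_j+f(\xi_{j+1},\ldots,\xi_i)$. (For $s=0$ there is nothing to prove beyond choosing one infinite element, so assume $s\ge1$.)

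For the saturation argument, given $A\subseteq\mathbb{N}$ and a finite set $\Phi$ of admissible functions, let $Y_{A,\Phi}\subseteq{}^{\ast}\mathbb{N}^{s+1}$ be the set of tuples $(\eta_0,\ldots,\eta_s)$ satisfying the above conditions for this $A$, all $0\le j<i\le s$, and all $f\in\Phi$ of the appropriate arity $i-j\in\{1,\ldots,s\}$. Each $Y_{A,\Phi}$ is internal, being cut out by a finite conjunction of internal conditions built from ${}^{\ast}A$, the internal functions ${}^{\ast}f$, and the internal arithmetic operations, and there are at most $\mathfrak c$ such sets; so by $\mathfrak c^{+}$-saturation it is enough to verify the finite intersection property. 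Given pairs $(A_1,\Phi_1),\ldots,(A_r,\Phi_r)$, let $c$ be the finite coloring of $\mathbb{N}$ whose colour classes are the atoms of the Boolean algebra generated by $A_1,\ldots,A_r$, and group $\Phi:=\bigcup_t\Phi_t$ by arity into finite sets $F_1,\ldots,F_s$. By Theorem \ref{Theorem:Moreira} there are $x_0,\ldots,x_s\in\mathbb{N}$ for which the set $\{x_0\cdots x_j:0\le j\le s\}\cup\{x_0\cdots x_j+f(x_{j+1},\ldots,x_i):0\le j<i\le s,\ f\in F_{i-j}\}$ is $c$-monochromatic, hence lies in a single atom and so has a common membership pattern across $A_1,\ldots,A_r$; since all these numbers are standard, this says precisely that $(x_0,\ldots,x_s)\in\bigcap_t Y_{A_t,\Phi_t}$.

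Finally, choose $(\xi_0,\ldots,\xi_s)$ in the intersection of the whole family $\{Y_{A,\Phi}\}$; by the reduction it only remains to check that each coordinate is infinite. For $1\le i\le s$, applying the second condition with $j=0$ and the admissible test function $f(x_1,\ldots,x_i)=x_i$ gives $\xi_0\sim\xi_0+\xi_i$; were $\xi_i$ finite, say $\xi_i=k\in\mathbb{N}$, then $\xi_0\sim g(\xi_0)$ with $g(x)=x+k$, forcing $\xi_0=\xi_0+k$ and hence $k=0$, a contradiction, so $\xi_i$ is infinite. For $\xi_0$, the first condition with $i=1$ gives $\xi_0\sim\xi_0\xi_1$; if $\xi_0=k$ were finite, then $\xi_0\xi_1=g(\xi_1)$ with $g(x)=kx$, so $g(\xi_1)\sim k$ would force $k\xi_1=k$, impossible since $\xi_1$ is infinite. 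Thus $(\xi_0,\ldots,\xi_s)$ has the required property. The substantive content is Theorem \ref{Theorem:Moreira} itself; I expect the only mildly delicate points to be the two flagged at the outset, both handled by including $A=\mathbb{N}$ among the test sets and by exploiting the rigidity property ``$\xi\sim g(\xi)\Rightarrow\xi=g(\xi)$''.
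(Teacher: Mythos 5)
Your argument is correct and is exactly the route the paper intends: the paper dismisses this corollary as ``an immediate consequence of Theorem \ref{Theorem:Moreira} and saturation,'' and your proof simply spells out that $\mathfrak{c}^{+}$-saturation argument (internal conditions indexed by pairs $(A,\Phi)$, finite intersection property via a coloring by atoms of the Boolean algebra generated by the $A_t$'s). Your two flagged refinements---including $A=\mathbb{N}$ to keep the shifted quantities in ${}^{\ast}\mathbb{N}$, and the rigidity property $\xi\sim f(\xi)\Rightarrow\xi=f(\xi)$ to force all coordinates infinite---are exactly the right way to handle the details the paper leaves implicit.
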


\begin{remark}
If $\xi _{0},\ldots ,\xi _{s}\in {}^{\ast }\mathbb{N}$ are as in Corollary %
\ref{Corollary:Moreira}, then we have that $\xi _{0}\equiv \cdots \equiv \xi
_{s}\equiv 0\mathrm{\ \mathrm{mod}}\ q$ for every $q\in \mathbb{N}$. Indeed,
we have that, for $0<i\leq s$,%
\begin{equation*}
\xi _{0}\cdots \xi _{i-1}+\xi _{i}\equiv \xi _{0}\cdots \xi _{i-1}\mathrm{\
\mathrm{mod}}\ q
\end{equation*}%
and hence $\xi _{i}\equiv 0\mathrm{\ \mathrm{mod}}\ q$. It follows that $\xi
_{0}\equiv \xi _{0}\xi _{1}\equiv 0\mathrm{\ \mathrm{mod}}\ q$.
\end{remark}

The following result is the analogue of Theorem \ref{Theorem:Moreira} for
polynomials vanishing at $1$. While the proof of Theorem \ref{Theorem:PR}
bears some resemblance to the proof of Theorem \ref{Theorem:Moreira}, some
modifications are needed to deal with polynomials vanishing at $1$. It is
unclear how to obtain an analogue of such results for polynomials vanishing
at a point other than $0$ or $1$. Recall that we let $\mathbb{N}$ be the set
of integer that are greater than or equal to $1$, and $\mathbb{N}_{0}=%
\mathbb{N}\cup \left\{ 0\right\} $. Notice that, according to Notation \ref%
{Notation:set}, $\mathbb{N}\cap (\mathbb{N}_{0}/d+1)=\mathbb{N}$ when $d=0$.

\begin{theorem}
\label{Theorem:PR}Fix $d\in \mathbb{Q}$. Let $c$ be a finite coloring of $%
\mathbb{N}+d$. Let $s\in \mathbb{N}$ and, for each $i\in \{1,\ldots ,s\}$,
let $F_{i}$ be a finite set of functions $\mathbb{N}^{i}\rightarrow \mathbb{Z%
}$ such that for all $f\in F_{i}$ and any $x_{1},\ldots ,x_{i-1}\in \mathbb{N%
}$, the function $x\mapsto f(x_{1},\ldots ,x_{i-1},x)$ is a polynomial with
integer coefficients vanishing at $1$. Then there exist infinitely many $%
x_{0}\in \mathbb{N}+d$ and $x_{1},\ldots ,x_{s}\in \mathbb{N}\cap (\mathbb{N}%
_{0}/d+1)$ such that the set
\begin{equation*}
\{x_{0}\cdots x_{j}:0\leq j\leq s\}\cup \{x_{0}\cdots x_{j}+f(x_{j+1},\ldots
,x_{i})\mid 0\leq j<i\leq s,f\in F_{i-j}\}
\end{equation*}%
is contained in $\mathbb{N}+d$ and monochromatic for $c$.
\end{theorem}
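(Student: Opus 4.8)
The plan is to follow the combinatorial architecture of the proof of Theorem~\ref{Theorem:Moreira} in \cite{moreira_monochromatic_2017}, with two changes: every appeal to the additive polynomial van der Waerden theorem is made through its shifted form Corollary~\ref{Corollary:vdw} rather than through Theorem~\ref{Theorem:vdw}, and a fixed $d\in\mathbb{Q}$ is carried through all of the bookkeeping. The argument is an induction on $s$ that builds the configuration by introducing one new variable at a time. Throughout one maintains a piecewise syndetic set $A\subseteq\mathbb{N}+d$ that is monochromatic for $c$ and that serves as a reservoir of admissible partial products $x_0\cdots x_j$; at each level there are two moves, an \emph{additive} move producing the points $x_0\cdots x_j+f(x_{j+1},\dots ,x_i)$ with $f\in F_{i-j}$, and a \emph{multiplicative} move producing the next partial product $x_0\cdots x_{j+1}$ together with a fresh piecewise syndetic monochromatic reservoir.

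The additive move is a direct application of Corollary~\ref{Corollary:vdw}. Once $x_0,\dots ,x_{i-1}$ have been chosen, the maps obtained by varying $x_i$ in the shifts $f(x_{j+1},\dots ,x_i)$ (for $j<i$, $f\in F_{i-j}$) are polynomials with integer coefficients vanishing at $1$, so Corollary~\ref{Corollary:vdw} produces an $\mathrm{IP}^{*}$ set of admissible values of $x_i$; a finite intersection of $\mathrm{IP}^{*}$ sets (over the finitely many choices of $j$, $i$, $f$) is again $\mathrm{IP}^{*}$, and this is compatible with the piecewise syndetic reservoirs of the multiplicative move in the same way as in \cite{moreira_monochromatic_2017}. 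Vanishing at $1$ enters twice here: it is what makes Corollary~\ref{Corollary:vdw} applicable, and it is exactly what forces $x_0\cdots x_j+f(x_{j+1},\dots ,x_i)$ to degenerate to $x_0\cdots x_j\in\mathbb{N}+d$ when the later variables equal $1$, keeping the whole configuration inside $\mathbb{N}+d$. The side condition $x_i\in\mathbb{N}\cap(\mathbb{N}_0/d+1)$ on the multiplicative variables is precisely the requirement that $x_0\cdots x_j\cdot x_i$ stay in $\mathbb{N}+d$ once $x_0\cdots x_j$ does; writing $d=p/q$ in lowest terms this set is the progression $1+q\mathbb{N}_0$ (and all of $\mathbb{N}$ when $d=0$), and one simply runs every van der Waerden step along this progression.

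The multiplicative move is the heart of the matter, and is the step that makes Theorem~\ref{Theorem:Moreira} itself non-trivial: from a piecewise syndetic monochromatic reservoir one must extract a value of the new variable and a new piecewise syndetic monochromatic reservoir into which the scaled-up partial products fall. Since a dilate $\tfrac{1}{x}A$ of a piecewise syndetic set need not be piecewise syndetic for a given $x$, one has to reproduce Moreira's argument that a good dilation parameter exists, and then check that it survives the two modifications of this paper --- restricting the dilation parameter to the progression $1+q\mathbb{N}_0$, and working in the translated copy $\mathbb{N}+d$ rather than in $\mathbb{N}$. I expect the restriction to the progression and the translation by $d$ to amount to careful notation rather than new ideas, but this verification, together with checking that the $\mathrm{IP}^{*}$ constraints of the additive move are compatible with the piecewise syndetic reservoirs of the multiplicative move, is where essentially all of the work lies. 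Finally, passing from ``there exist $x_0,\dots ,x_s$'' to ``there exist infinitely many'' is routine: one may always run the construction with $x_0$ forced to exceed a prescribed bound, or pass to the nonstandard picture and use $\mathfrak{c}^{+}$-saturation exactly as in the deduction of Corollary~\ref{Corollary:Moreira} from Theorem~\ref{Theorem:Moreira}.
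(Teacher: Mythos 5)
Your overall strategy coincides with the paper's: adapt the construction behind Theorem \ref{Theorem:Moreira}, invoking Corollary \ref{Corollary:vdw} instead of Theorem \ref{Theorem:vdw}, restricting the multiplicative variables to $\mathbb{N}\cap (\mathbb{N}_{0}/d+1)$ (which, for $d=p/q$ in lowest terms, is indeed the progression $\{1,1+q,1+2q,\dots\}$, and meets every IP${}^{*}$ set after shifting by $1$), and carrying the translate $\mathbb{N}+d$ through the bookkeeping. But as written the proposal is a plan, not a proof: you explicitly defer ``essentially all of the work'', and the part you defer is precisely the content of the theorem. Concretely, what is missing is the auxiliary family of shifts that must be built into each reservoir. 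In the paper's argument one constructs a single infinite sequence of monochromatic piecewise syndetic sets $B_{n}\subseteq y_{n}D_{n}$ (not an induction on $s$), where $D_{n}$ is $B_{n-1}$ intersected with the translates $B_{n-1}-g(y_{n})$ for \emph{all} compensating polynomials $g(z)=y_{m_{1}+1,n-1}\bigl(f(y_{m_{1}+1,m_{2}},\dots ,y_{m_{k}+1,n-1}z)-f(y_{m_{1}+1,m_{2}},\dots ,y_{m_{k}+1,n-1})\bigr)$ over all groupings $0\leq m_{1}<\cdots <m_{k}<n$; only at the very end does a pigeonhole over the colors $t_{n}$ select the levels $n_{0}<\cdots <n_{s}$. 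This ``pay in advance for every possible grouping'' device is unavoidable because when the level-$j$ reservoir is frozen the later variables are not yet chosen, and because the additive pattern $x_{0,j}+f(x_{j+1},\dots ,x_{i})$ is reached from $B_{n_{i}}$ only through membership of $x_{0,i}+x_{j+1,i}\,f(x_{j+1},\dots ,x_{i})$, i.e.\ the shift enters at scale $x_{j+1,i}$, not at scale $1$; one then telescopes down level by level, using $f(x_{j+1},\dots ,x_{i-1},1)=0$ at the last step, and divides element-wise. Your ``additive move applied once $x_{0},\dots ,x_{i-1}$ are chosen'' does not account for this rescaling or for the fact that the earlier reservoirs are already fixed, so as described it would not place $x_{0,j}+f(x_{j+1},\dots ,x_{i})$ in the color class of $B_{n_{j}}$.

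A secondary point: the difficulty you single out in the multiplicative move, namely that $\tfrac{1}{x}A$ need not be piecewise syndetic and that one must ``reproduce Moreira's argument that a good dilation parameter exists,'' misidentifies the mechanism. No set is ever divided: $y_{n}$ is chosen by Corollary \ref{Corollary:vdw} so that the intersection $D_{n}$ is piecewise syndetic, and then one uses only the trivial fact that multiplying a piecewise syndetic subset of $\mathbb{N}+d$ by a fixed $y_{n}\in \mathbb{N}\cap (\mathbb{N}_{0}/d+1)$ yields a piecewise syndetic subset of $\mathbb{N}+d$; divisions occur only element-wise in the final verification. So the place where the vanishing-at-$1$ hypothesis and the progression restriction must be checked is in the definition of the family $G_{n}(f)$ and in the telescoping verification, not in any dilation lemma for piecewise syndetic sets. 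Until those steps are written out, the proof has a genuine gap.
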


\begin{proof}
Observe that we have that%
\begin{equation*}
\{x_{0}\cdots x_{j}:0\leq j\leq s\}
\end{equation*}%
is contained in $\mathbb{N}+d$, as long as $x_{0}\in \mathbb{N}+d$ and $%
x_{1},\ldots ,x_{s}\in \mathbb{N}\cap (\mathbb{N}_{0}/d+1)$.

Fix a finite coloring $c:\mathbb{N}+d\rightarrow \left\{ 1,2,\ldots
,r\right\} $, and set $C_{i}=\left\{ n\in \mathbb{N}+d:c(n) =i\right\} $ for
$i\in \left\{ 1,2,\ldots ,r\right\} $. We will recursively construct four
sequences:

\begin{itemize}
\item A sequence $(y_{i})_{i=1}^{\infty }\ $in $\mathbb{N}\cap (\mathbb{N}%
_{0}/d+1)$;

\item Two sequences $(B_{i})_{i=0}^{\infty }$ and $(D_{i})_{i=1}^{\infty }$
of piecewise syndetic subsets of $\mathbb{N}+d$;

\item A sequence $(t_{i})_{i=0}^{\infty }$ of elements of $\{1,\ldots ,r\}$;
\end{itemize}

such that $B_{i}\subseteq C_{t_{i}}$ for every $i\geq 0$.\newline

We begin by choosing $t_{0}\in \{1,\ldots ,r\}$ such that $C_{t_{0}}\cap
\left( \mathbb{N}+d\right) $ is piecewise syndetic (which is always possible
since $\mathbb{N}+d$ itself is piecewise syndetic), and let $%
B_{0}:=C_{t_{0}} $. Now, assume that $n\geq 1$ and $%
(y_{j})_{j=1}^{n-1},(B_{j})_{j=0}^{n-1},(D_{j})_{j=1}^{n-1}$ and $%
(t_{j})_{j=0}^{n-1}$ have already been defined. For $i,j\in \mathbb{N}$ with
$i\leq j$, we set $y_{i,j}:=y_{i}y_{i+1}\cdots y_{j}$, whenever these have
been defined.

For each $k\in \{1,\ldots ,s\}$ and every $f\in F_{k}$, we define the
collection $G_{n}(f)$ of all functions $g:\mathbb{Z}\rightarrow \mathbb{Z}$
of the form
\begin{equation}
g(z)=y_{m_{1}+1,n-1}(f(y_{m_{1}+1,m_{2}},\ y_{m_{2}+1,m_{3}},\ \ldots ,\
y_{m_{k}+1,n-1}z)-f(y_{m_{1}+1,m_{2}},\ y_{m_{2}+1,m_{3}},\ \ldots ,\
y_{m_{k}+1,n-1}))\text{\label{Equation:g}}
\end{equation}%
for any possible choice $0\leq m_{1}<m_{2}<\cdots <m_{k}<n$. In %
\eqref{Equation:g}, we adopt the convention that an empty product (such as $%
y_{m_{k}+1}\cdots y_{n-1}$ when $m_{k+1}=n-1$) is equal to $1$. If $k>n$,
then we set $G_{n}(f)$ to be empty. Observe that each $g\in G_{n}(f)$ is a
polynomial with integer coefficients vanishing at $1$. Notice that $\mathbb{N%
}\cap \mathbb{N}_{0}/d$ is an IP-set. By Corollary \ref{Corollary:vdw}, we
can find some $y_{n}\in \mathbb{N}\cap (\mathbb{N}_{0}/d+1)$ such that the
intersection
\begin{equation}
D_{n}:=B_{n-1}\cap \bigcap_{k=1}^{s}\bigcap_{f\in F_{k}}\bigcap_{g\in
G_{n}\left( f\right) }(B_{n-1}-g(y_{n}))
\end{equation}%
is piecewise syndetic. Therefore, $y_{n}D_{n}\subseteq \mathbb{N}+d$ is also
piecewise syndetic. (The fact that $y_{n}\in \mathbb{N}\cap (\mathbb{N}%
_{0}/d+1)$ and $D_{n}\subseteq \mathbb{N}+d$ guarantee that $%
y_{n}D_{n}\subseteq \mathbb{N}+d$.) We then define $B_{n}$ to be a $c$%
-monochromatic piecewise syndetic subset of $y_{n}D_{n}$, and let $t_{n}\in
\left\{ 1,2,\ldots ,r\right\} $ be such that $B_{n}\subseteq C_{t_{n}}$.
This concludes the recursive construction.

We note that, for every $i\in \mathbb{N}$, $B_{i}\subseteq
y_{i}D_{i}\subseteq y_{i}B_{i-1}$. This implies that, for every $j<i$, $%
B_{i}\subseteq y_{i}y_{i-1}\cdots y_{j+1}B_{j}$. Since $(t_{i})_{i=0}^{%
\infty }$ takes only finitely many values, there exists $t\in \{1,\ldots
,r\} $ and infinitely many tuples of natural numbers $n_{0}<\cdots <n_{s}$
such that $t_{n_{0}}=t_{n_{1}}=\cdots =t_{n_{s}}=t$. Consider the elements $%
x_{1}:=y_{n_{0}+1,n_{1}}$, $x_{2}:=y_{n_{1}+1,n_{2}}$, $\ldots $, $%
x_{s}:=y_{n_{s-1}+1,n_{s}}$ of $\mathbb{N}+1$. Pick then $\tilde{x}\in
B_{n_{s}}$, and observe that%
\begin{equation*}
\tilde{x}\in B_{n_{s}}\subseteq x_{s}B_{n_{s-1}}\subseteq
x_{s}x_{s-1}B_{n_{s-2}}\subseteq \cdots \subseteq x_{s}x_{s-1}\cdots
x_{1}B_{n_{0}}\text{,}
\end{equation*}%
and hence there is $x_{0}\in B_{n_{0}}\subseteq \mathbb{N}+d$ such that $%
x_{s}x_{s-1}\cdots x_{0}=\tilde{x}$ and $x_{j}x_{j-1}\cdots x_{0}\in
B_{n_{j}}\subseteq C_{t}$ for $0\leq j\leq s$. For $0\leq j\leq i\leq s$,
set $x_{j,i}:=x_{j}x_{j+1}\cdots x_{i}=y_{n_{j-1}+1,n_i}$. We need to show
that the set
\begin{equation*}
A:=\{x_{0,s}\}\cup \{x_{0,j}+f(x_{j+1},\ldots ,x_{i})\mid 0\leq j<i\leq
s,f\in F_{i-j}\}
\end{equation*}%
is contained in $C_{t}$. Fix $0\leq j<i\leq s$ and $f\in F_{i-j}$. We claim
that $x_{0}x_{1}\cdots x_{j}+f(x_{j+1},\ldots ,x_{i})\in B_{n_{j}}$. We have
that%
\begin{eqnarray*}
x_{0,i}+x_{j+1,i}f\left( x_{j+1},\ldots ,x_{i}\right) &\in
&B_{n_{i}}+x_{j+1,i}f\left( x_{j+1},\ldots ,x_{i}\right) \\
&\subseteq &y_{n_{i}}D_{n_{i}}+x_{j+1,i}f\left( x_{j+1},\ldots ,x_{i}\right)
\\
&=&y_{n_{i}}\left( D_{n_{i}}+x_{j+1,i-1}y_{n_{i-1}+1,n_{i}-1}f\left(
x_{j+1},\ldots ,x_{i}\right) \right)
\end{eqnarray*}%
By the definition of $D_{n_{i}}$, we have that%
\begin{equation*}
D_{n_{i}}+x_{j+1,i-1}y_{n_{i-1}+1,n_{i}-1}\big(f(x_{j+1},\ldots
,x_{i})-f(x_{j+1},\ldots ,x_{i-1},y_{n_{i-1}+1}\cdots y_{n_{i}-1})\big)%
\subseteq B_{n_{i}-1}
\end{equation*}%
Hence,%
\begin{equation*}
x_{0,i}+x_{j+1,i}f\left( x_{j+1},\ldots ,x_{i}\right) \in y_{n_{i}}\left(
B_{n_{i}-1}+x_{j+1,i-1}y_{n_{i-1}+1,n_{i}-1}f(x_{j+1},\ldots
,x_{i-1},y_{n_{i-1}+1}\cdots y_{n_{i}-1})\right)
\end{equation*}%
Proceeding in this fashion, after $n_{i}-n_{i-1}$ steps one obtains%
\begin{equation*}
x_{0,i}+x_{j+1,i}f\left( x_{j+1},\ldots ,x_{i}\right) \in x_{i}\left(
B_{n_{i-1}}+f\left( x_{j+1},\ldots ,x_{i-1},1\right) \right) \text{.}
\end{equation*}%
Since by assumption $f\left( x_{j+1},\ldots ,x_{i-1},1\right) =0$, we have
that%
\begin{equation*}
x_{0,i}+x_{j+1,i}f\left( x_{j+1},\ldots ,x_{i}\right) \in x_{i}B_{n_{i-1}}%
\text{.}
\end{equation*}%
Dividing by $x_{i}$ we deduce that
\begin{equation*}
x_{0,i-1}+x_{j+1,i-1}f(x_{j+1},\dots ,x_{i})\in B_{n_{i-1}}\subset
x_{j+1,i-1}B_{n_{j}}.
\end{equation*}%
Therefore, dividing by $x_{j+1,i-1}$ it follows that

\begin{equation*}
x_{0,j}+f\left( x_{j+1},\ldots ,x_{i}\right) \in B_{n_{j}}\subseteq C_{t}
\end{equation*}%
as claimed. This concludes the proof that $A\subseteq C_{t}$.
\end{proof}

\begin{corollary}
\label{Corollary:PR}Fix $d\in \mathbb{Q}$, and $s\in \mathbb{N}$. Then there
exist infinite $\xi _{0}\in {}^{\ast }\mathbb{N}+d$ and $\xi _{1},\ldots
,\xi _{s}\in {}^{\ast }\mathbb{N}_{0}/d+1$ such that, for every for every
function $f:\mathbb{N}^{i}\rightarrow \mathbb{Z}$ such that, for every $%
x_{1},\ldots ,x_{i-1}\in \mathbb{N}$, the function $x\mapsto f\left(
x_{1},\ldots ,x_{i-1},x\right) $ is a polynomial with integer coefficients
vanishing at $1$, one has that
\begin{equation*}
\xi _{0}\sim \xi _{0}\cdots \xi _{i}\sim \xi _{0}\cdots \xi _{j}+f\left( \xi
_{j+1},\ldots ,\xi _{i}\right)
\end{equation*}%
for $0\leq j<i\leq s$.
\end{corollary}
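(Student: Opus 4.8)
The plan is to obtain Corollary~\ref{Corollary:PR} from Theorem~\ref{Theorem:PR} by exactly the transfer-and-saturation argument that yields Corollary~\ref{Corollary:Moreira} from Theorem~\ref{Theorem:Moreira}, now carrying along the shift parameter $d$ and replacing ``vanishing at $0$'' by ``vanishing at $1$''. Working in the fixed $\mathfrak{c}^{+}$-saturated extension, I would introduce the type $\Sigma(\xi_0,\ldots,\xi_s)$ whose conditions assert: (i) $\xi_0\in{}^{\ast}(\mathbb{N}+d)$ and $\xi_i\in{}^{\ast}(\mathbb{N}\cap(\mathbb{N}_0/d+1))$ for $1\le i\le s$; (ii) $\xi_i>n$ for every $n\in\mathbb{N}$ and every $i$; and (iii) for every $A\subseteq\mathbb{N}+d$, every $0\le j<i\le s$, and every admissible $f$ of arity $i-j$, the two biconditionals $\xi_0\in{}^{\ast}A\Leftrightarrow\xi_0\cdots\xi_i\in{}^{\ast}A$ and $\xi_0\in{}^{\ast}A\Leftrightarrow \xi_0\cdots\xi_j+f(\xi_{j+1},\ldots,\xi_i)\in{}^{\ast}A$. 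A realization of $\Sigma$ gives infinite elements with the required membership pattern, hence $\xi_0\sim\xi_0\cdots\xi_i\sim\xi_0\cdots\xi_j+f(\xi_{j+1},\ldots,\xi_i)$ for all admissible $f$ by transitivity of $\sim$; so it suffices to realize $\Sigma$.

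To use saturation I would first bound $|\Sigma|$: there are $\mathfrak{c}$ subsets of $\mathbb{N}+d$, and an admissible $f$ is determined by a function from $\mathbb{N}^{(\text{arity})-1}$ into the countable set $\mathbb{Z}[x]$, so there are at most $\aleph_0^{\aleph_0}=\mathfrak{c}$ of them; together with the finitely many choices of $(i,j)$ this gives $|\Sigma|\le\mathfrak{c}$, and $\mathfrak{c}^{+}$-saturation reduces the problem to finite satisfiability. So fix a finite $\Sigma_0\subseteq\Sigma$ mentioning $A_1,\ldots,A_k$, functions $f_1,\ldots,f_r$, and a threshold $N$. Let $c$ be the finite coloring of $\mathbb{N}+d$ recording, for each $m$, which of $A_1,\ldots,A_k$ contain $m$, so that $c$-monochromatic sets are exactly those lying in the same $A_t$'s; group the $f_p$ by arity into finite sets $F_1,\ldots,F_s$ as in the hypothesis of Theorem~\ref{Theorem:PR}, and apply that theorem. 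It produces infinitely many $x_0\in\mathbb{N}+d$ and $x_1,\ldots,x_s\in\mathbb{N}\cap(\mathbb{N}_0/d+1)$ for which $\{x_0\cdots x_j:0\le j\le s\}\cup\{x_0\cdots x_j+f(x_{j+1},\ldots,x_i)\}$ is contained in $\mathbb{N}+d$ and $c$-monochromatic; monochromaticity is precisely conditions (i) and (iii) of $\Sigma_0$ for the tuple $(x_0,\ldots,x_s)$, and choosing the witnesses large enough gives (ii).

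The step I expect to be the real obstacle is this last one: ensuring that the standard witnesses of Theorem~\ref{Theorem:PR} can be taken with all coordinates exceeding $N$, so that the realization of $\Sigma$ has every $\xi_i$ infinite. A literal reading of Theorem~\ref{Theorem:PR} does not give this, since in its proof one may legitimately take $y_n\equiv 1$ (the polynomials $g\in G_n(f)$ vanish at $1$, so $D_n=B_{n-1}$ already), forcing $x_1=\cdots=x_s=1$. The fix is to record, either inside the proof of Theorem~\ref{Theorem:PR} or as a short remark, that by Corollary~\ref{Corollary:vdw} the set of $y_n$ for which $D_n$ is piecewise syndetic is IP$^{*}$, hence unbounded (an IP$^{*}$ set meets the finite-sums set of any sufficiently sparse IP sequence), so each $y_n$ may be chosen above any prescribed bound and the resulting $x_0,\ldots,x_s$ can be made arbitrarily large. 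With that in hand, everything else---the Boolean-algebra coloring, the cardinality count, and the invocation of $\mathfrak{c}^{+}$-saturation---is routine and parallels the proof of Corollary~\ref{Corollary:Moreira}.
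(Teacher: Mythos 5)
Your argument is correct and is exactly the route the paper intends: Corollary \ref{Corollary:PR} is stated, like Corollary \ref{Corollary:Moreira}, as an immediate consequence of the corresponding theorem (here Theorem \ref{Theorem:PR}) together with $\mathfrak{c}^{+}$-saturation, so your type-realization argument with the Boolean-atom coloring is the paper's (implicit) proof. Your additional observation is a legitimate refinement of a point the paper glosses over: to make $\xi _{1},\ldots ,\xi _{s}$ infinite one must note that in the proof of Theorem \ref{Theorem:PR} each $y_{n}$ can be chosen above any prescribed bound, which follows as you say from Corollary \ref{Corollary:vdw}, since the admissible values form (a shift of) an IP$^{\ast }$ set, and such a set meets the finite-sums set generated by arbitrarily large elements of the IP set $\mathbb{N}\cap \mathbb{N}_{0}/d$, so the standard witnesses can be taken with all coordinates exceeding any threshold.
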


\begin{corollary}
\label{Corollary:PR2}Fix $d\in \mathbb{Q}$. There exist infinite $r\in
{}^{\ast }\mathbb{N}$ and $s\in {}^{\ast }\mathbb{N}_{0}/d$ such that for
every polynomial with integer coefficients $p\left( w\right) $ vanishing at $%
0$ one has that%
\begin{equation*}
r\sim r+p\left( s\right) \sim rs+r+ds\text{.}
\end{equation*}
\end{corollary}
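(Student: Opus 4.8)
The plan is to obtain this as a change-of-variables consequence of Corollary \ref{Corollary:PR} in the special case where its parameter equals $1$. First I would apply that corollary with the given $d$ and with $1$ in place of $s$; since the only admissible pair of indices is then $(j,i)=(0,1)$, the function $f$ appearing there is a single-variable polynomial with integer coefficients vanishing at $1$, and I obtain infinite $\xi_0\in{}^{\ast}\mathbb{N}+d$ and $\xi_1\in{}^{\ast}\mathbb{N}_0/d+1$ with the property that $\xi_0\sim\xi_0\xi_1\sim\xi_0+g(\xi_1)$ for every such $g$.

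Next I would set $r:=\xi_0-d$ and $s:=\xi_1-1$. By the definitions in Notation \ref{Notation:set}, transferred to the nonstandard extension, $\xi_0\in{}^{\ast}\mathbb{N}+d$ forces $r\in{}^{\ast}\mathbb{N}$ and $\xi_1\in{}^{\ast}\mathbb{N}_0/d+1$ forces $s\in{}^{\ast}\mathbb{N}_0/d$, and both $r$ and $s$ are infinite because $\xi_0$ and $\xi_1$ are. The substance of the argument is then just two algebraic identities: $\xi_0\xi_1-d=(r+d)(s+1)-d=rs+r+ds$, and, for a given $p\in\mathbb{Z}[w]$ with $p(0)=0$, setting $g(w):=p(w-1)$ — which has integer coefficients and satisfies $g(1)=p(0)=0$ — one has $\xi_0+g(\xi_1)-d=(r+d)+p(s)-d=r+p(s)$.

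Finally I would transport the indiscernibility relation through the shift by $-d$: the translation $x\mapsto x-d$ is a bijection from $\mathbb{N}+d$ onto $\mathbb{N}$ identifying finite colorings of one side with finite colorings of the other, so for $\xi,\eta\in{}^{\ast}(\mathbb{N}+d)$ one has $\xi\sim\eta$ exactly when $\xi-d\sim\eta-d$ in ${}^{\ast}\mathbb{N}$; applying this to $\xi_0\sim\xi_0\xi_1$ and $\xi_0\sim\xi_0+g(\xi_1)$ gives $r\sim rs+r+ds$ and $r\sim r+p(s)$, and transitivity of $\sim$ yields the full chain. The argument is essentially a single substitution, so I do not anticipate a real obstacle; the only thing needing care is the bookkeeping around the affine shift $\mathbb{N}+d$ and the dilation $\mathbb{N}_0/d$ (especially when $d$ is negative or non-integral) — namely, that subtracting $d$ is legitimate both as a map into ${}^{\ast}\mathbb{N}$ and as an operation preserving $\sim$ — together with the observation that $p\mapsto p(\cdot-1)$ is a bijection between polynomials vanishing at $0$ and those vanishing at $1$, so that the single pair $(r,s)$ works for every $p$ simultaneously.
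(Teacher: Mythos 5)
Your proposal is correct and follows essentially the same route as the paper: the paper's proof also applies Corollary \ref{Corollary:PR} (in one variable), passes from polynomials vanishing at $0$ to polynomials vanishing at $1$ via $w\mapsto w-1$, sets $r:=\xi_0-d$, $s:=\xi_1-1$, and then shifts the indiscernibility relation by $-d$. Your added care about why subtracting $d$ preserves $\sim$ and lands in ${}^{\ast}\mathbb{N}$ is exactly the bookkeeping the paper leaves implicit in its ``and hence'' step.
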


\begin{proof}
By Corollary \ref{Corollary:PR} there exist infinite $r_{0}\in {}^{\ast }%
\mathbb{N}+d$ and $s_{0}\in {}^{\ast }\mathbb{N}_{0}/d+1$ such that%
\begin{equation*}
r_{0}\sim r_{0}+p\left( s_{0}-1\right) \sim r_{0}s_{0}
\end{equation*}%
for every polynomial with integer coefficients $p\left( w\right) $ vanishing
at $0$. Set now $r:=r_{0}-d$ and $s:=s_{0}-1$. Observe that%
\begin{equation*}
r+d\sim r+d+p\left( s\right) \sim rs+r+ds+d
\end{equation*}%
and hence%
\begin{equation*}
r\sim r+p\left( s\right) \sim rs+r+ds
\end{equation*}%
for every polynomial with integer coefficients $p\left( w\right) $ vanishing
at $0$.
\end{proof}

As an application of Theorem \ref{Theorem:PR} and Theorem \ref%
{Theorem:Moreira}, we establish the partition regularity of certain
Diophantine equations.

\begin{example}
\label{Example:PR}Let
\begin{equation*}
p(x,z)=a_{0}x^{d}+a_{1}x^{d-1}z+\cdots +a_{d-1}xz^{d-1}+a_{d}z^{d}
\end{equation*}%
be a homogeneous polynomial of degree $d\geq 1$ in the variables $x$ and $z$%
. Set%
\begin{equation*}
P\left( x,y,z\right) =x^{d}(x-y)+p(x,z)\text{.}
\end{equation*}

The solutions to the equation $P\left( x,y,z\right) =0$ are parametrized by
\begin{equation*}
\begin{cases}
x=r \\
y=r+p(1,s) \\
z=rs%
\end{cases}%
\text{.}
\end{equation*}%
Notice that $p\left( 1,0\right) =a_{0}$ and $p\left( 1,1\right)
=\sum_{i=0}^{d}a_{i}$. Therefore Corollary \ref{Corollary:Moreira} proves
that the equation $P\left( x,y,z\right) =0$ is partition regular when $%
a_{0}=0$, and Corollary \ref{Corollary:PR} proves that the equation $P\left(
x,y,z\right) =0$ is partition regular when $\sum_{i=0}^{d}a_{i}=0$.
\end{example}

\section{Applications\label{Section:equation}}

The goal of this section is to use Theorem \ref{Theorem:necessary} to
establish Theorem \ref{Theorem:equation}. Fix $a,b,c\in \mathbb{Z}$ not all
zero such that $abc=0$ or $a+b+c=0$. We start by considering the polynomial $%
P\left( x,y,z\right) =x^{2}-xy+ax+by+cz=0$. Observe that $\tilde{P}\left(
w\right) =\left( a+b+c\right) w$.

\begin{lemma}
\label{Lemma:minimal-P}If $a+b+c\neq 0$ and $P\left( x,y,z\right)
=x^{2}-xy+ax+by+cz$ satisfies the minimal Rado condition, then one of the
following holds:

\begin{enumerate}
\item $a=b=0$,

\item at most one among $a,b,c$ is zero, and $0\in \left\{
a+b,a+c,b+c\right\} $.
\end{enumerate}
\end{lemma}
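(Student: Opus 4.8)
The plan is to reduce the statement to Lemma~\ref{Lemma:minimal-Rado} and then run through the very short list of possible minimal Rado sets of $P$. Since $\tilde P(w)=(a+b+c)w$, the assumption $a+b+c\ne 0$ makes $0$ the unique root of $\tilde P$, so $P^{(0)}=P$; together with the standing hypothesis ($abc=0$ or $a+b+c=0$) this also forces $abc=0$, so at least one of $a,b,c$ is zero. As $P$ satisfies the minimal Rado condition and $\tilde P$ is a nonzero linear polynomial, hence splits into linear factors over $\mathbb Z$, Lemma~\ref{Lemma:minimal-Rado} yields a root of $\tilde P$ — necessarily $0$ — and a minimal Rado set $J$ of $P^{(0)}=P$ such that, writing $c_\alpha$ for the coefficient of $x^\alpha$ in $P$, the equation $\sum_{\alpha\in J}c_\alpha w^{|\alpha|}=0$ has a nonzero solution in $\mathbb{Z}/p\mathbb{Z}$ for infinitely many primes $p$, and moreover $\sum_{\alpha\in J}c_\alpha=0$ whenever $J$ is homogeneous. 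Here $\mathrm{Supp}(P)\subseteq\{(2,0,0),(1,1,0),(1,0,0),(0,1,0),(0,0,1)\}$ with $c_{(2,0,0)}=1$, $c_{(1,1,0)}=-1$, $c_{(1,0,0)}=a$, $c_{(0,1,0)}=b$, $c_{(0,0,1)}=c$, and $(1,0,0),(0,1,0),(0,0,1)$ lie in $\mathrm{Supp}(P)$ precisely when $a,b,c$ are nonzero respectively.

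Beyond the facts that $J$ is nonempty and contains a minimal index of $\mathrm{Supp}(P)$, the key structural input is that two indices lying in a common part of a Rado partition whose supports together involve at most two variables must have equal degree. Hence $(2,0,0)$ cannot share a part with any of $(1,0,0),(0,1,0),(0,0,1)$, and $(1,1,0)$ cannot share a part with $(1,0,0)$ or $(0,1,0)$. I would first prove the crucial claim: \emph{if $(1,1,0)\in J$, then $a=b=0$} (giving conclusion~(1)). Indeed, choose a positive linear map $\phi(\alpha)=t_1\alpha_1+t_2\alpha_2+t_3\alpha_3$ with $t_i\in\mathbb{N}_0$ realizing a Rado partition whose least part is $J$; then $\phi(1,1,0)=t_1+t_2=\min\{\phi(\gamma):\gamma\in\mathrm{Supp}(P)\}$. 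If $a\ne 0$, then $(1,0,0)\in\mathrm{Supp}(P)$ and $t_1=\phi(1,0,0)\ge t_1+t_2$ forces $t_2=0$, so $\phi(1,0,0)=t_1+t_2$ and $(1,0,0)$ lies in the least part $J$ as well, contradicting the degree constraint; hence $a=0$, and symmetrically (via $(0,1,0)$) $b=0$. This disposes of every case with $(1,1,0)\in J$, that is, of $J\in\{\{(1,1,0)\},\{(2,0,0),(1,1,0)\},\{(1,1,0),(0,0,1)\}\}$.

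It remains to handle $(1,1,0)\notin J$. Since $(2,0,0)$ is incompatible with every degree-one index and $(1,1,0)$ is the only other degree-two index, either $J=\{(2,0,0)\}$ or $J\subseteq\{(1,0,0),(0,1,0),(0,0,1)\}$. The first is impossible: $J$ would be homogeneous with $\sum_{\alpha\in J}c_\alpha=1\ne 0$. In the second case $J$ is homogeneous of degree one, so $\sum_{\alpha\in J}c_\alpha=0$; a singleton is ruled out since its lone coefficient ($a$, $b$ or $c$) would be simultaneously nonzero (the index is in $\mathrm{Supp}(P)$) and zero, and $|J|=3$ is ruled out since it would give $a+b+c=0$. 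Hence $|J|=2$, and the three possibilities yield $a+b=0$, $a+c=0$, or $b+c=0$, with the two relevant coefficients nonzero. In the first case $abc=0$ forces $c=0$, so at most one of $a,b,c$ vanishes and $0\in\{a+b,a+c,b+c\}$ — conclusion~(2); the second case is symmetric. In the third case $b,c\ne 0$ and $abc=0$ force $a=0$, whence $a+b+c=0$, contradicting the hypothesis, so this case does not occur.

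The step I expect to be the main obstacle is the crucial claim of the second paragraph, namely that the monomial $xy$ can occupy the least part of a Rado partition of $P$ only when both linear coefficients $a$ and $b$ vanish. Once that is secured, what remains is routine bookkeeping over the compatible subsets of $\mathrm{Supp}(P)$ containing a minimal index, together with the trivial observations that $\{(2,0,0)\}$ and $\{(1,1,0)\}$ are homogeneous with coefficient sum $\pm1\ne0$ while $\{(2,0,0),(1,1,0)\}$ has coefficient sum $0$.
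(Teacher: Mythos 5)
Your proof is correct, and it rests on the same two pillars as the paper's argument: Lemma~\ref{Lemma:minimal-Rado} applied at the unique root $0$ of $\tilde{P}$ (so that $P^{(0)}=P$), together with the structural fact that two indices lying in a common part of a Rado partition and involving at most two variables must have equal degree. Where you differ is in thoroughness. The paper disposes of the lemma in two sentences, by assuming that at least one of $a,b$ is nonzero and asserting that then every minimal index, hence every minimal Rado set, of $P$ is homogeneous of degree $1$; that assertion is literally accurate only when $a\neq 0$, since for $a=0\neq b$ the index $(2,0,0)$ is still minimal in $\mathrm{Supp}(P)$ and $\{(2,0,0)\}$ is a genuine minimal Rado set, so the possibilities $J=\{(2,0,0)\}$ and $(1,1,0)\in J$ are not addressed by the paper's one-liner. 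Your write-up supplies exactly these missing cases: the positivity of the coefficients $t_i\in\mathbb{N}_0$ shows that $(1,1,0)$ can lie in the least part only if $a=b=0$ (your crucial claim), and $J=\{(2,0,0)\}$ is excluded because a homogeneous witnessing set must have zero coefficient sum (equivalently, $w^2=0$ has no nonzero root modulo $p$). One small remark: your appeals to the standing assumption $abc=0$ are legitimate in the context of Section~\ref{Section:equation}, but they are not needed: in your first two cases the two coefficients indexed by $J$ are already nonzero, so at most one of $a,b,c$ vanishes without invoking $abc=0$, and in your third case $b,c\neq 0$ and $b+c=0$ yield conclusion (2) directly rather than a contradiction; with that adjustment your argument proves the lemma exactly as stated, which is also how the paper uses it.
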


\begin{proof}
Suppose that $P$ satisfies the minimal Rado condition, and assume that at
least one between $a,b$ is nonzero. We have that the only root of $\tilde{P}$
is $0$, and the only minimal indices for $P$ have degree $1$. In particular,
any minimal Rado set for $P$ is homogeneous of degree $1$. This implies that
at most one among $a,b,c$ is zero, and $0\in \left\{ a+b,a+c,b+c\right\} $.
\end{proof}

\begin{lemma}
\label{Lemma:equation}Consider the polynomial $P\left( x,y,z\right)
=x^{2}-xy+ax+by+cz$ for $a,b,c\in \mathbb{Z}$ such that $abc=0$ or $a+b+c=0$%
. If $\tilde{P}=0$ or $P$ satisfies the minimal Rado condition, then $P$ is
partition regular.
\end{lemma}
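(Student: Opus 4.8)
The plan is to use an affine change of variables to reduce $P$ to a polynomial already shown to be partition regular in Section~\ref{Section:sufficient}; the natural dichotomy is governed by whether $\tilde P$ vanishes. Since $\tilde P(w)=P(w,w,w)=(a+b+c)w$, I split into the cases $a+b+c=0$ and $a+b+c\neq 0$.

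\textbf{Case $a+b+c=0$.} Here $c=-(a+b)$. The first step is to expand the shift $P^{(b)}(x,y,z)=P(x+b,y+b,z+b)$; a direct computation shows that both the coefficient of $y$ and the constant term $b(a+b+c)$ vanish, leaving
\[
P^{(b)}(x,y,z)=x^{2}-xy+(a+b)x+cz=x^{2}-xy+(a+b)(x-z).
\]
This has exactly the shape $x^{d}(x-y)+p(x,z)$ of Example~\ref{Example:PR} with $d=1$ and $p(x,z)=(a+b)x-(a+b)z$, a homogeneous polynomial of degree $1$ whose coefficients sum to zero. Hence Example~\ref{Example:PR} applies in the case $\sum_i a_i=0$, which it settles via Corollary~\ref{Corollary:PR} by parametrizing $x=r$, $y=r+(a+b)(1-s)$, $z=rs$ and taking $f(w)=(a+b)(1-w)$ (integer coefficients, vanishing at $1$); so $P^{(b)}$ is partition regular. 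Applying Proposition~\ref{Proposition:scale} to $P^{(b)}$ with the shift $-b$ (or doing nothing when $b=0$) then gives that $P$ is partition regular.

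\textbf{Case $a+b+c\neq 0$.} Then $\tilde P$ is nonzero, so the hypothesis forces $P$ to satisfy the minimal Rado condition, while the standing assumption forces $abc=0$. By Lemma~\ref{Lemma:minimal-P}, either $a=b=0$, or at most one of $a,b,c$ vanishes and $0\in\{a+b,a+c,b+c\}$. I would rule out the second alternative: since $abc=0$, at least one of $a,b,c$ is zero, hence exactly one is, say $a=0$; then $a+b=b\neq 0$ and $a+c=c\neq 0$, forcing $b+c=0$, whence $a+b+c=0$, a contradiction (the cases $b=0$, $c=0$ are analogous). Therefore $a=b=0$, so $P=x^{2}-xy+cz$ with $c\neq 0$; this is the case $d=1$, $p(x,z)=cz$ (so the coefficient of $x$ in $p$ is zero) of Example~\ref{Example:PR}, which is partition regular by Corollary~\ref{Corollary:Moreira} via the parametrization $x=r$, $y=r+cs$, $z=rs$ with $f(w)=cw$ vanishing at $0$. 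This settles both cases.

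No genuinely difficult step remains, as all the combinatorial substance lives in Section~\ref{Section:sufficient}. The points needing care are the bookkeeping in the shift $P\mapsto P^{(b)}$ --- one must confirm it really eliminates the $y$-term and the constant, so that Example~\ref{Example:PR} applies --- and the short arithmetic collapsing the second alternative of Lemma~\ref{Lemma:minimal-P} into the first. Verifying that the displayed parametrizations do solve the relevant equations, and that the auxiliary polynomials $f$ vanish where Corollaries~\ref{Corollary:PR} and~\ref{Corollary:Moreira} require (at $1$ in the first case, at $0$ in the second), is routine.
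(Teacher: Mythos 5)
Your proof is correct, and it runs on the same engine as the paper's: Lemma \ref{Lemma:minimal-P}, the shift $P\mapsto P^{(b)}$ (which kills the $y$-term and, when $a+b+c=0$, the constant $b(a+b+c)$), and the partition regularity of $x^{d}(x-y)+p(x,z)$ from Example \ref{Example:PR} via Corollaries \ref{Corollary:Moreira} and \ref{Corollary:PR}. The difference is organizational: you split on $a+b+c=0$ versus $a+b+c\neq 0$ and treat the whole first case uniformly through the single identity $P^{(b)}=x(x-y)+(a+b)(x-z)$, whereas the paper distinguishes the subcases $c=0$, $b=0$, $a=0$, and ``all nonzero'' and reduces each separately; likewise your remark that $abc=0$ together with $a+b+c\neq 0$ collapses the second alternative of Lemma \ref{Lemma:minimal-P}, leaving only $a=b=0$, is a small arithmetic cleanup that the paper leaves implicit in its case list. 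This buys a shorter case analysis at no cost. The one point to phrase carefully is the degenerate subcase $a+b=0$ (equivalently $c=0$) inside your first case: there $p(x,z)=(a+b)(x-z)$ is the zero polynomial, which is not literally a homogeneous polynomial of degree $1$ as in the statement of Example \ref{Example:PR}; the argument still goes through, either because Corollary \ref{Corollary:PR} admits the zero polynomial $f(w)=(a+b)(1-w)$ (it does vanish at $1$), or by noting, as the paper does, that $(x+a)(x-y)=0$ already has the trivial monochromatic solutions $x=y$.
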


\begin{proof}
By Lemma \ref{Lemma:minimal-P}, either $a=b=0$ or at most one among $a,b,c$
is zero, and $0\in \left\{ a+b,a+c,b+c,a+b+c\right\} $.

When $a=b=0$, $P$ reduces to $x\left( x-y\right) +cz$. Such a polynomial is
partition regular by Example \ref{Example:PR}.

We now consider the case when exactly one among $a,b,c$ is zero. When $c=0$
and $a+b=0$, the equation reduces to $\left( x+a\right) \left( x-y\right) =0$%
, which admits the constant solutions $x=y$, and hence it is (trivially)
partition regular. When $b=0$ and $a+c=0$, the equation reduces to $x\left(
x-y\right) +a\left( x-z\right) =0$, which is partition regular by Example %
\ref{Example:PR}. When $a=0$ and $b+c=0$, the polynomial reduces to $P\left(
x,y,z\right) =x\left( x-y\right) +b\left( y-z\right) $. Observe now that $%
P^{\left( b\right) }\left( x,y\right) =x\left( x-y\right) +b\left(
x-z\right) $, which has just been shown to be partition regular.

Similarly, when $a,b,c$ are all nonzero, and $a+b+c=0$, one has that $%
P^{\left( b\right) }\left( x,y,z\right) =x\left( x-y\right) +\left(
a+b\right) x+cz$. Thus, $P^{\left( b\right) }$ is partition regular, as
shown above. This concludes the proof.
\end{proof}

We now consider the polynomial $P\left( x,y,z\right) =x^{2}-y^{2}+ax+by+cz$.
Observe that $\tilde{P}\left( w\right) =\left( a+b+c\right) w$.

\begin{lemma}
\label{Lemma:minimal-P2}Suppose that $a+b+c\neq 0$ and $P\left( x,y,z\right)
=x^{2}-y^{2}+ax+by+cz$ satisfies the minimal Rado condition. Then at most
one among $a,b,c$ is zero, and $0\in \left\{ a+b,a+c,b+c\right\} $.
\end{lemma}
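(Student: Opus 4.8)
The plan is to run the argument of the proof of Lemma~\ref{Lemma:minimal-P} essentially verbatim, exploiting that every monomial of $P(x,y,z)=x^{2}-y^{2}+ax+by+cz$ involves a single variable.

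First I would record the elementary facts. Since $a+b+c\neq 0$, the monovariate polynomial $\tilde P(w)=(a+b+c)w$ has $0$ as its only root, so $P^{(0)}=P$; moreover $\mathrm{Supp}(P)\subseteq\{(2,0,0),(0,2,0),(1,0,0),(0,1,0),(0,0,1)\}$, and every index here has Hamming length $1$, so $\ell(\alpha+\beta)\le 2$ for \emph{all} $\alpha,\beta\in\mathrm{Supp}(P)$. Consequently, if $(J_{0},\dots,J_{\ell},d_{1},\dots,d_{m})$ is a minimal Rado functional for $P$, then Corollary~\ref{Corollary:length} applied with $i=j=0$ forces every index in $J_{0}$ to have the same degree; hence every minimal Rado set of $P$ is homogeneous. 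By Remark~\ref{Remark:partition} such a $J_{0}$ is also cut out by some $\alpha\mapsto\alpha_{1}\sigma_{1}+\cdots+\alpha_{n}\sigma_{n}$ with all $\sigma_{i}$ infinite, so it consists of minimal indices of $P$.

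Now suppose $P$ satisfies the minimal Rado condition. Lemma~\ref{Lemma:minimal-Rado} then produces a homogeneous minimal Rado set $J$ of $P$ with $\sum_{\alpha\in J}c_{\alpha}=0$, where $c_{\alpha}=\frac{1}{\alpha!}\frac{\partial^{\alpha}P}{\partial x^{\alpha}}(\boldsymbol{0})$ is the coefficient of $x^{\alpha}$ in $P$; explicitly $c_{(2,0,0)}=1$, $c_{(0,2,0)}=-1$, $c_{(1,0,0)}=a$, $c_{(0,1,0)}=b$, $c_{(0,0,1)}=c$. Since each $c_{\alpha}$ with $\alpha\in\mathrm{Supp}(P)$ is nonzero, $J$ is not a singleton, and its common degree is $1$ or $2$. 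In the degree-$1$ case $J$ is a subset of size $\ge 2$ of $\{(1,0,0),(0,1,0),(0,0,1)\}\cap\mathrm{Supp}(P)$ whose coefficients sum to $0$; the only options are $\{(1,0,0),(0,1,0)\}$, $\{(1,0,0),(0,0,1)\}$, $\{(0,1,0),(0,0,1)\}$ — giving $a+b=0$, $a+c=0$, $b+c=0$ respectively — together with $\{(1,0,0),(0,1,0),(0,0,1)\}$, which forces $a+b+c=0$ and is excluded. Hence $0\in\{a+b,a+c,b+c\}$; moreover, in each surviving case the two relevant coefficients are nonzero, and the third of $a,b,c$ is nonzero too, for otherwise $a+b+c=0$. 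In particular at most one of $a,b,c$ is zero, which is the assertion.

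The step I expect to be the main obstacle is excluding the remaining degree-$2$ case. There homogeneity gives $J\subseteq\{(2,0,0),(0,2,0)\}$, and since $|J|\ge 2$ while $c_{(2,0,0)}+c_{(0,2,0)}=0$, necessarily $J=\{(2,0,0),(0,2,0)\}$; as $J$ consists of minimal indices of $P$, this forces $(1,0,0),(0,1,0)\notin\mathrm{Supp}(P)$, i.e.\ $a=b=0$. This degenerate branch has to be ruled out — just as in Lemma~\ref{Lemma:minimal-P}, where the alternative $a=b=0$ is isolated and, in the proof of Theorem~\ref{Theorem:equation}, the polynomial $x^{2}-y^{2}+cz$ (with $c\neq 0$) is treated separately — so that the present lemma is invoked under the standing reduction that $a$ and $b$ are not both zero. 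With the degree-$2$ case thereby eliminated, only the degree-$1$ analysis survives, and the proof is complete.
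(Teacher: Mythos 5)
Your degree-one analysis is exactly the paper's argument, carried out in more detail: the paper's proof of Lemma \ref{Lemma:minimal-P2} consists of the observation that, by Corollary \ref{Corollary:length}, every minimal Rado set of $P$ is homogeneous, the further assertion that it is homogeneous \emph{of degree one}, and then the coefficient count via Lemma \ref{Lemma:minimal-Rado} that you spell out explicitly. Up to that point your write-up is correct and matches the paper.

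The difference is the degree-two branch, and there you have put your finger on a real defect of the statement rather than on a gap you could have closed: as the definitions are literally written, nothing in the hypotheses excludes $a=b=0$. Indeed, for $P=x^{2}-y^{2}+cz$ with $c\neq 0$ (so $a+b+c\neq 0$), the tuple $\left( \left\{ \left( 2,0,0\right) ,\left( 0,2,0\right) \right\} ,\left\{ \left( 0,0,1\right) \right\} \right) $ is a minimal Rado functional of order $0$ (take $t_{1}=t_{2}=t$ and $t_{3}$ in the same colour class with $t_{3}-2t$ arbitrarily large; nothing in the definitions forces the $t_{i}$, or the $\sigma_{i}$ of Remark \ref{Remark:partition}, to be distinct), and the polynomial attached to it in Definition \ref{Definition:minimal-Rado} is $w^{2}-w^{2}=0$, so the minimal Rado condition holds vacuously while the conclusion of the lemma fails; correspondingly, the paper's claim that every minimal Rado set is homogeneous of degree one is unjustified precisely when $a=b=0$. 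So the lemma needs either the extra alternative ``$a=b=0$'' (as in Lemma \ref{Lemma:minimal-P}) or the hypothesis that $a,b$ are not both zero. Be aware, however, that your parenthetical misreads the paper: unlike $x^{2}-xy+cz$, the polynomial $x^{2}-y^{2}+cz$ is \emph{not} treated separately later --- in Lemma \ref{Lemma:equation2} the case $a+b=0$ is reduced to $\left( x-y\right) \left( x+y+a\right) =0$, which tacitly assumes $c=0$ and is legitimate only because Lemma \ref{Lemma:minimal-P2} is supposed to have excluded $a=b=0$, $c\neq 0$. Hence the ``standing reduction'' you invoke is not available off the shelf, and patching the lemma as you propose would leave the partition regularity of $x^{2}-y^{2}+cz=0$ as a genuine additional case to be settled in Theorem \ref{Theorem:equation}.
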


\begin{proof}
Suppose that $P$ satisfies the minimal Rado condition. We have that the only
root of $\tilde{P}$ is $0$. By Corollary \ref{Corollary:length}, we have
that any minimal Rado set for $P$ is homogeneous of degree $1$. This implies
that at most one among $a,b,c$ is zero, and $0\in \left\{
a+b,a+c,b+c\right\} $.
\end{proof}

\begin{lemma}
\label{Lemma:equation2}Suppose that $a,b,c\in \mathbb{Z}$ are such that
either $a+b+c=0$ or $abc=0$. Set $P\left( x,y,z\right) =x^{2}-y^{2}+ax+by+cz$%
.\ If $\tilde{P}=0$ or $P$ satisfies the minimal Rado condition, then $P$ is
partition regular.
\end{lemma}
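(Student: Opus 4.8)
The plan is to mirror the proof of Lemma~\ref{Lemma:equation}: use Lemma~\ref{Lemma:minimal-P2} to cut down to a short list of cases and then exhibit monochromatic configurations in each. The first observation is that the hypotheses actually force $a+b+c=0$, i.e.\ $\tilde{P}=0$. Indeed, if $a+b+c\neq 0$ then the standing assumption forces $abc=0$, and since $\tilde{P}$ is then nonzero the lemma's hypothesis forces $P$ to satisfy the minimal Rado condition; by Lemma~\ref{Lemma:minimal-P2} exactly one of $a,b,c$ vanishes and one of $a+b,a+c,b+c$ vanishes, but with exactly one of $a,b,c$ equal to $0$ this immediately gives $a+b+c=0$, a contradiction. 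So $c=-(a+b)$ throughout; in particular $P(t,t,t)=\tilde{P}(t)=0$ for every $t$, so $\mathcal{A}_{P}$ contains every singleton and $P$ is already partition regular. As in Lemma~\ref{Lemma:equation}, I would nevertheless prefer to produce genuine (non-constant) monochromatic configurations wherever possible, and organise the rest of the argument by which of $a,b,c$ vanish; since $a+b+c=0$ and $(a,b,c)\neq(0,0,0)$, at most one of them does.

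If $c=0$ then $a=-b\neq 0$ and $P=x^{2}-y^{2}+a(x-y)=(x-y)(x+y+a)$, and the constant solutions $x=y$ (with $z$ arbitrary), which are monochromatic after a pigeonhole argument, witness partition regularity. If $a,b,c$ are all nonzero, I would first apply Proposition~\ref{Proposition:scale}: replacing $P$ by the shift $P^{(r)}(x,y,z)=x^{2}-y^{2}+(2r+a)x+(b-2r)y+cz$ (the constant term $(a+b+c)r$ vanishes) and taking $r=-a/2$, resp.\ $r=b/2$, clears the coefficient of $x$, resp.\ of $y$, whenever $a$, resp.\ $b$, is even, reducing to the case where one of $a,b$ is zero. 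The residual parities can be absorbed by a further use of Proposition~\ref{Proposition:scale}: for a quadratic $P$ of this shape, the substitution $x_{i}\mapsto x_{i}/r$ turns the equation into the same equation with $a,b,c$ replaced by $ra,rb,rc$, so it is harmless to pass to linear coefficients divisible by any prescribed integer.

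It then remains to treat $P(x,y,z)=x^{2}-y^{2}+a(x-z)$ (the case $b=0$, $c=-a$; the case $a=0$ is symmetric in the obvious way). Solving $P=0$ for $z$ gives $z=x+(x^{2}-y^{2})/a$, and I would parametrise the solutions so that $x+y$ does not involve the auxiliary variable: with $x=r+p(s)$ and $y=r-p(s)$ one gets $x^{2}-y^{2}=4rp(s)$ and hence $z=r+p(s)+4rp(s)/a$; taking $p(s)=\tfrac{a}{4}s$ this becomes $z=rs+r+\tfrac{a}{4}s$, so the solution set contains all configurations
\begin{equation*}
\bigl\{\,r+\tfrac{a}{4}s,\ \ r-\tfrac{a}{4}s,\ \ rs+r+\tfrac{a}{4}s\,\bigr\},
\end{equation*}
and Corollary~\ref{Corollary:PR2} (with $d=\tfrac{a}{4}$) shows this family is partition regular — once $\tfrac{a}{4}s$ is allowed to range over natural numbers and $p(s)=\tfrac{a}{4}s$ has integer coefficients, i.e.\ once $4\mid a$. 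For general $a$ one forces $4\mid a$ using the scaling remark of the previous paragraph (or, more robustly, one first establishes partition regularity of $x^{2}-y^{2}+4a(x-z)$ via Corollary~\ref{Corollary:PR2} with integer data, and descends using the $q$-partition-regular clause of Proposition~\ref{Proposition:scale}, noting that the configurations produced by Theorem~\ref{Theorem:PR} may be required to lie in any fixed residue class modulo a given integer, since the sets of admissible moduli are $\mathrm{IP}^{*}$).

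The step I expect to be the real obstacle is this last matching of the solution set of $x^{2}-y^{2}+a(x-z)=0$ to a configuration covered by Corollary~\ref{Corollary:PR2}. Unlike the family $x^{2}-xy+\cdots$ of Lemma~\ref{Lemma:equation}, whose natural parametrisation already has the shape handled by Example~\ref{Example:PR}, here the identity $x^{2}-y^{2}=(x-y)(x+y)$ forces a factor of $2$ into the parametrisation, so the polynomials that appear have integer coefficients only after the linear coefficients are made divisible by $4$; arranging the scaling and descent so that this is legitimate — in particular controlling the divisibility of the configurations furnished by Theorem~\ref{Theorem:PR} — is the crux of the argument. (If one only wants the statement as literally phrased, the constant solutions $x=y=z$ noted in the first paragraph already suffice.)
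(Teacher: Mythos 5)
Your reduction to the case $a+b+c=0$ is correct and matches the paper (Lemma~\ref{Lemma:minimal-P2} plus the observation that the case $a+b+c\neq 0$ is vacuous), and your treatment of the degenerate case $a+b=0$ via the constant solutions $x=y$ is exactly the paper's. You are also right that, under the paper's literal definition of $\mathcal{A}_{P}$, the diagonal solutions $x=y=z$ already witness partition regularity once $a+b+c=0$; but you cannot rest the lemma on that observation: the entire content of the paper's proof (and the reason Corollary~\ref{Corollary:PR2} is proved at all) is to produce \emph{non-constant} monochromatic configurations, and with the diagonal reading the sufficiency half of Theorem~\ref{Theorem:equation} would be empty whenever $\tilde{P}=0$. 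So the part of your proposal that matters is the second half, and there the gap you yourself flag is real and is not repaired by the tools you invoke.

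The gap is the direction of the scaling. Proposition~\ref{Proposition:scale} yields: if the equation with coefficients $(a,b,c)$ is partition regular, then so is the one with coefficients $(ra,rb,rc)$ (this is the substitution $x_{i}\mapsto x_{i}/r$); it does \emph{not} yield the converse. Your plan is to prove the statement when $4\mid a$ (so that $p(s)=\tfrac{a}{4}s$ has integer coefficients), or after a half-integer shift clearing a linear coefficient, and then ``descend'' to general $a,b,c$ --- that uses the proposition backwards. An honest descent from $x^{2}-y^{2}+4a(x-z)$ to $x^{2}-y^{2}+a(x-z)$ would require the former to be $4$-partition regular, i.e.\ to admit monochromatic solutions divisible by arbitrarily high powers of $4$ (the $q$-clause of Proposition~\ref{Proposition:scale}); Theorem~\ref{Theorem:PR} and Corollary~\ref{Corollary:PR2} provide no such divisibility (the elements produced there are of the form $x_{0}\cdots x_{j}+f(\cdots)$ with the multipliers taken in $\mathbb{N}\cap(\mathbb{N}_{0}/d+1)$ and no control of residues), and your parenthetical claim that the configurations of Theorem~\ref{Theorem:PR} ``may be required to lie in any fixed residue class'' is precisely the unproved statement. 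The paper threads this needle differently: it never tries to force divisibility of $a,b$; it keeps the integral parametrization $x=r+s$, $y=r-s$, absorbs the factor $4$ coming from $x^{2}-y^{2}=4rs$ into the polynomial by passing (citing Proposition~\ref{Proposition:scale}) to $\tfrac14(x^{2}-y^{2})+ax+by+cz$, makes $z$ integral by taking $r=(a+b)r_{0}$, $s=(a+b)s_{0}$, and pushes all the remaining rationality into the parameter $d=\tfrac{a-b}{a+b}$ of the configuration $\{r_{0}+p(s_{0}),\,r_{0}+q(s_{0}),\,r_{0}s_{0}+r_{0}+ds_{0}\}$ --- that is, the rational data goes where Corollary~\ref{Corollary:PR2} tolerates it (the coefficient $d$ of the product term), never into the increments $p,q$, which must be integer polynomials vanishing at $0$. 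Without that device, or a genuine proof of the divisibility/residue claim about Theorem~\ref{Theorem:PR}, your argument does not close.
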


\begin{proof}
By Lemma \ref{Lemma:minimal-P2}, either $a=b=c=0$ or at most one among $%
a,b,c $ is zero, and $0\in \left\{ a+b,a+c,b+c,a+b+c\right\} $.

When $a+b=0$, $P\left( x,y,z\right) =0$ reduces to $\left( x-y\right)
(x+y+a)=0$, which admits the constant solutions $x=y$, and hence it is
(trivially) partition regular.

We consider now the case when $a+b\neq 0$ and $a+b+c=0$. By Proposition \ref%
{Proposition:scale}, it suffices to show that the polynomial $P\left(
x,y,z\right) =\frac{1}{4}\left( x^{2}-y^{2}\right) +ax+by+cz$ is partition
regular. Observe that setting $x=r+s$ and $y=r-s$ we have that $P\left(
x,y,z\right) =rs+\left( a+b\right) r+\left( a-b\right) s-\left( a+b\right) z$%
. Therefore, we have that $P\left( x,y,z\right) $ is partition regular if
and only if there exist $r,s\in {}^{\ast }\mathbb{N}$ such that $a+b$
divides $rs+\left( a-b\right) s$ and $r+s\sim r-s\sim \frac{1}{a+b}rs+r+%
\frac{\left( a-b\right) }{a+b}s$. For this, it suffices to find $%
r_{0},s_{0}\in {}^{\ast }\mathbb{N}$ such that $\frac{s_{0}}{\left\vert
a+b\right\vert }\in {}^{\ast }\mathbb{N}$ and $r_{0}+s_{0}\sim
r_{0}-s_{0}\sim r_{0}s_{0}+r_{0}+\frac{a-b}{a+b}s_{0}$, as then one can set $%
r:=\left( a+b\right) r_{0}$ and $s:=\left( a+b\right) s_{0}$. We have that
such $r_{0},s_{0}$ exist by Corollary \ref{Corollary:PR2}.

We now consider the case when $a+b\neq 0$ and $a+b+c\neq 0$. In this case,
we have that $P$ does not satisfy the minimal Rado condition, and hence
there is nothing to prove. To see this, notice that if $P$ satisfies the
minimal Rado condition, then by Lemma \ref{Lemma:minimal-P2} at most one
among $a,b,c$ is zero, and $0\in \left\{ a+b,a+c,b+c\right\} $. Since by
hypothesis $abc=0$ and $a+b\neq 0$, it follows that $c=0$. This gives the
contradiction that $0\in \left\{ a,b\right\} $, concluding the proof.
\end{proof}

Theorem \ref{Theorem:equation} is an immediate consequence of the lemmas in
this section and Theorem \ref{Theorem:necessary}.


\begin{thebibliography}{10}

\bibitem{bergelson_ergodic_1996}
Vitaly Bergelson.
\newblock Ergodic {Ramsey} theory--an update.
\newblock In {\em Ergodic theory of {$\mathbb{Z}^d$} actions ({Warwick},
  1993--1994)}, volume 228 of {\em London {Math}. {Soc}. {Lecture} {Note}
  {Ser}.}, pages 1--61. Cambridge Univ. Press, Cambridge, 1996.

\bibitem{bergelson_ultrafilters_2010}
Vitaly Bergelson.
\newblock Ultrafilters, {IP} sets, dynamics, and combinatorial number theory.
\newblock In {\em Ultrafilters across mathematics}, volume 530 of {\em Contemp.
  {Math}.}, pages 23--47. Amer. Math. Soc., Providence, RI, 2010.

\bibitem{bergelson_partition_2001}
Vitaly Bergelson and Neil Hindman.
\newblock Partition regular structures contained in large sets are abundant.
\newblock {\em Journal of Combinatorial Theory. Series A}, 93(1):18--36, 2001.

\bibitem{bergelson_new_2017}
Vitaly Bergelson, John~H. Johnson, Jr., and Joel Moreira.
\newblock New polynomial and multidimensional extensions of classical partition
  results.
\newblock {\em Journal of Combinatorial Theory. Series A}, 147:119--154, 2017.

\bibitem{bergelson_polynomial_1996}
Vitaly Bergelson and Alexander Leibman.
\newblock Polynomial extensions of van der {Waerden}'s and {Szemerédi}'s
  theorems.
\newblock {\em Journal of the American Mathematical Society}, 9(3):725--753,
  1996.

\bibitem{Bergelson_Leibman_Lesigne08}
Vitaly Bergelson, Alexander Leibman, and Emmanuel Lesigne.
\newblock Intersective polynomials and the polynomial {S}zemer\'edi theorem.
\newblock {\em Adv. Math.}, 219(1):369--388, 2008.

\bibitem{bergelson_ergodic_2017}
Vitaly Bergelson and Joel Moreira.
\newblock Ergodic theorem involving additive and multiplicative groups of a
  field and {$\{x+y,xy\}$} patterns.
\newblock {\em Ergodic Theory and Dynamical Systems}, 37(3):673--692, 2017.

\bibitem{bergelson_measure_2018}
Vitaly Bergelson and Joel Moreira.
\newblock Measure preserving actions of affine semigroups and {$\{x+y,xy\}$}
  patterns.
\newblock {\em Ergodic Theory and Dynamical Systems}, 38(2):473--498, 2018.

\bibitem{csikvari_density_2012}
P{\'{e}}ter Csikv{\'{a}}ri, Katalin Gyarmati, and Andr\'{a}s S\'{a}rk{\"{o}}zy.
\newblock Density and {R}amsey type results on algebraic equations with
  restricted solution sets.
\newblock {\em Combinatorica}, 32(4):425--449, 2012-09-02.

\bibitem{di_nasso_hypernatural_2015}
Mauro Di~Nasso.
\newblock Hypernatural numbers as ultrafilters.
\newblock In {\em Nonstandard analysis for the working mathematician}, pages
  443--474. Springer, Dordrecht, 2015.

\bibitem{di_nasso_iterated_2015}
Mauro Di~Nasso.
\newblock Iterated hyper-extensions and an idempotent ultrafilter proof of
  {Rado}'s {Theorem}.
\newblock {\em Proceedings of the American Mathematical Society},
  143(4):1749--1761, 2015.

\bibitem{di_nasso_taste_2015}
Mauro Di~Nasso.
\newblock A taste of nonstandard methods in combinatorics of numbers.
\newblock In {\em Geometry, structure and randomness in combinatorics},
  volume~18 of {\em {CRM} {Series}}, pages 27--46. Ed. Norm., Pisa, 2015.

\bibitem{di_nasso_nonstandard_2019}
Mauro Di~Nasso, Isaace Goldbring, and Martino Lupini.
\newblock {\em Nonstandard {M}ethods in {C}ombinatorial {N}umber {T}heory},
  volume 2239 of {\em Lecture {Notes} in {Mathematics}}.
\newblock Springer-Verlag, Berlin, 2019.

\bibitem{di_nasso_ramsey_2018}
Mauro Di~Nasso and Lorenzo Luperi~Baglini.
\newblock Ramsey properties of nonlinear {D}iophantine equations.
\newblock {\em Adv. Math.}, 324:84--117, 2018.

\bibitem{di_nasso_fermat-like_2018}
Mauro Di~Nasso and Maria Riggio.
\newblock Fermat-like equations that are not partition regular.
\newblock {\em Combinatorica}, 38(5):1067--1078, 2018.

\bibitem{frantzikinakis_higher_2017}
Nikos Frantzikinakis and Bernard Host.
\newblock Higher order {F}ourier analysis of multiplicative functions and
  applications.
\newblock {\em J. Amer. Math. Soc.}, 30(1):67--157, 2017.

\bibitem{Furstenberg81}
H.~Furstenberg.
\newblock {\em Recurrence in ergodic theory and combinatorial number theory}.
\newblock Princeton University Press, Princeton, N.J., 1981.

\bibitem{green_monochromatic_2016}
Ben Green and Tom Sanders.
\newblock Monochromatic sums and products.
\newblock {\em Discrete Analysis}, 2016.
\newblock Paper No. 5, 43.

\bibitem{hindman_monochromatic_2011}
Neil Hindman.
\newblock Monochromatic sums equal to products in {$\mathbb{N}$}.
\newblock {\em Integers}, 11(4):431--439, 2011.

\bibitem{khalfalah_number_2006}
Ayman Khalfalah and Endre Szemer\'{e}di.
\newblock On the number of monochromatic solutions of {$x+y=z^2$}.
\newblock {\em Combin. Probab. Comput.}, 15(1-2):213--227, 2006.

\bibitem{Lefmann91}
Hanno Lefmann.
\newblock On partition regular systems of equations.
\newblock {\em J. Combin. Theory Ser. A}, 58(1):35--53, 1991.

\bibitem{luperi_baglini_hyperintegers_2012}
Lorenzo Luperi~Baglini.
\newblock {\em Hyperintegers and nonstandard techniques in combinatorics of
  numbers}.
\newblock PhD thesis, University of Siena, 2012.
\newblock Available at arXiv:1212.2049.

\bibitem{luperi_baglini_partition_2014}
Lorenzo Luperi~Baglini.
\newblock Partition regularity of nonlinear polynomials: a nonstandard
  approach.
\newblock {\em Integers}, 14:Paper No. A30, 23, 2014.

\bibitem{matiyasevich_enumerable_1970}
Yuri~V. Matiyasevich.
\newblock Enumerable sets are diophantine.
\newblock {\em Soviet Mathematics Doklady}, 191(2):279--282, 1070.

\bibitem{moreira_monochromatic_2017}
Joel Moreira.
\newblock Monochromatic sums and products in {$\mathbb{N}$}.
\newblock {\em Annals of Mathematics. Second Series}, 185(3):1069--1090, 2017.

\bibitem{rado_studien_1933}
Richard Rado.
\newblock Studien zur {Kombinatorik}.
\newblock {\em Mathematische Zeitschrift}, 36(1):424--470, 1933.

\bibitem{robert_course_2000}
Alain~M. Robert.
\newblock {\em A course in {$p$}-adic analysis}, volume 198 of {\em Graduate
  {Texts} in {Mathematics}}.
\newblock Springer-Verlag, New York, 2000.

\bibitem{schur_kongruenz_1916}
Issai Schur.
\newblock {\"{U}}ber die kongruenz {$x^n + y^m = z^m (\mathrm{mod}p)$}.
\newblock {\em Jahresber Deutsch. Math. Verein.}, (25):114--117, 1916.

\bibitem{van_der_waerden_beweis_1927}
Bartel~Leendert van~der Waerden.
\newblock {Beweis einer Baudetschen Vermutung}.
\newblock {\em Nieuw Archief voor Wiskunde}, 15:212--216, 1927.

\end{thebibliography}
\end{document}